\newcommand{\TryPackage}[3]{\IfFileExists{#1.sty}{\usepackage{#1}#2}{#3}}
\newcommand*\wbar[1]{%        % widebar 
  \hbox{ \kern0.01em%
    \vbox{%
      \hrule height 0.5pt  % The actual bar
      \kern0.25ex%         % Distance between bar and symbol
      \hbox{%
        \kern-0.15em%       % Shortening on the left side
        \ensuremath{#1}%
        %\kern-0.1em%       % Shortening on the right side
      }%
    }%
  \kern0.15em}%
}
\newcommand{\lto}{\longrightarrow}
\newcommand{\wh}{\widehat}
\newcommand{\wt}{\widetilde}
\newcommand{\sm}{\smallsetminus}
\newcommand{\al}{\alpha}
\newcommand{\be}{\beta}
\newcommand{\ga}{\gamma}
\newcommand{\ep}{\varepsilon}
\renewcommand{\th}{\theta}
\newcommand{\la}{\lambda}
\renewcommand{\phi}{\varphi}
\newcommand{\si}{\sigma}
\newcommand{\Ga}{\Gamma}
\newcommand{\La}{\Lambda}
\newcommand{\Om}{\Omega}
\newcommand{\Si}{\Sigma}
\newcommand{\ZZ}{{\mathbb Z}}
\newcommand{\CC}{{\mathbb C}}
\newcommand{\FF}{{\mathbb F}}
\newcommand{\RR}{{\mathbb R}}
\newcommand{\cE}{\mathcal E}
\newcommand{\cI}{\mathcal I}
\newcommand{\cL}{\mathcal L}
\newcommand{\writhe}{\operatorname{\it wr}}
\newcommand{\width}{\operatorname{width}}
\newcommand{\rel}{\operatorname{rel}}
\newcommand{\Aut}{\operatorname{Aut}}
\newcommand{\VG}{\operatorname{\it VG}}
\newcommand{\WG}{\operatorname{\it WG}}
\newcommand{\VB}{\operatorname{\it VB}}
\newcommand{\EG}{\operatorname{\it EG}}
\newcommand{\QG}{\operatorname{\it QG}}
\newcommand*{\weq}{%
      \mathrel{\vcenter{\offinterlineskip
      \hbox{$\sim$}\vskip0.4mm\hbox{{\tiny \, \!\!\!\! $w$}}}}}
\newtheorem{theorem}{Theorem}[section]
\newtheorem{lemma}[theorem]{Lemma}
\newtheorem{proposition}[theorem]{Proposition}
\newtheorem{corollary}[theorem]{Corollary}
\theoremstyle{definition}                                     % this and the line below gives definitions in roman
\newtheorem{definition}[theorem]{Definition}
\theoremstyle{remark}
\newtheorem{remark}[theorem]{Remark}
\newtheorem{example}[theorem]{Example}
\begin{document}

\title[Alexander invariants  for virtual knots]{Alexander invariants for virtual knots}
\author[Boden]{Hans U. Boden}
\address{Mathematics \& Statistics, McMaster University, Hamilton, Ontario}
\email{boden@mcmaster.ca}

\author[Dies]{Emily Dies}
\address{Mathematics \& Statistics, McMaster University, Hamilton, Ontario}
\email{diesej@mcmaster.ca}

\author[Gaudreau]{Anne Isabel Gaudreau}
\address{Mathematics \& Statistics, McMaster University, Hamilton, Ontario}
\email{gaudreai@mcmaster.ca}

\author[Gerlings]{Adam Gerlings}
\address{Mathematics \& Statistics, University of Calgary, Calgary, Alberta}
\email{agerling@ucalgary.ca}

\author[Harper]{Eric Harper}
\address{Mathematics \& Statistics, McMaster University, Hamilton, Ontario}
\email{eharper@math.mcmaster.ca}

\author[Nicas]{Andrew J. Nicas}
\address{Mathematics \& Statistics, McMaster University, Hamilton, Ontario}
\email{nicas@mcmaster.ca}

\subjclass[2010]{Primary: 57M25, Secondary: 57M27, 20C15}
\keywords{Virtual knots and links, virtual knot groups, Alexander invariants, twisted Alexander polynomials, virtual crossing number}

\date{\today}
\begin{abstract}
Given a virtual knot $K$, we introduce a new group-valued invariant  $\VG_K$  called the virtual knot group, and we use the elementary ideals of $\VG_K$ to define invariants of $K$ called the virtual Alexander invariants. For instance, associated to the $0$-th ideal is a polynomial $H_K(s,t,q)$ in three variables which we call the virtual Alexander polynomial, and we show that it is closely related to the generalized Alexander polynomial $G_K(s,t)$ introduced in \cite{Sawollek, KR, SW-Alexander}. We define a natural normalization of the virtual Alexander polynomial and show it satisfies a skein formula. We also introduce the twisted virtual Alexander polynomial associated to a virtual knot $K$ and a representation $\varrho \colon \VG_K \to GL_n(R)$, and we define a normalization of the twisted virtual Alexander polynomial. As applications we derive bounds on the virtual crossing numbers of virtual knots from the virtual Alexander polynomial and twisted virtual Alexander polynomial. 
\end{abstract}
\maketitle

%%%%%%%%%%%%%%%%%%%%%%%%%%%%%%%%%%%%%%%%%%%%%%%%%%%%%%%%%%%%%%%%%%%%%%%%%

\section*{Introduction} 

Given a virtual knot $K$, we define a finitely presented group, denoted $\VG_K$, which we call the virtual knot group of $K$. Although the presentation of $\VG_K$ depends on the diagram of the virtual knot, we show that the group depends only on the virtual knot $K$. Thus $\VG_K$ is an invariant of the virtual knot $K.$ The abelianization of $\VG_K$ is $\ZZ^3$, the free abelian group  on three generators, and  we study the Alexander invariant associated to the surjection $\VG_K \to \ZZ^3$. As an invariant of virtual knots, the Alexander invariant is quite useful in that it gives an obstruction to $K$ being classical. For instance, we define the virtual Alexander polynomial  $H_K(s,t,q) = \Delta^0_K(s,t,q)$, and if $K$ is a classical knot, then it follows that $H_K(s,t,q)$ must vanish. 

We establish a skein formula for the virtual Alexander polynomial  and relate it to  the generalized Alexander polynomial $G_K(s,t)$, which was defined by Sawollek \cite{Sawollek}, Kauffman and Radford  \cite{KR}, and  Silver and Williams \cite{SW-Alexander}. Proposition \ref{HdeterminesG} and Corollary \ref{GdeterminesH} show that (i) $G_K(s,t)=H_K(s,t,1)$  up to units in  $\ZZ[s^{\pm 1}, t^{\pm 1}],$ and (ii) $H_K(s,t,q) = G_K(sq^{-1}, tq)$ up to units in $\ZZ[s^{\pm 1}, t^{\pm 1}, q^{\pm 1}],$  and it follows that the virtual Alexander polynomial $H_K(s,t,q)$ determines the generalized Alexander polynomial $G_K(s,t)$ and vice versa.

The virtual polynomial $H_K(s,t,q)$ encodes information about the virtual crossing number $v(K)$, in particular the $q$-$\width$ of $H_K(s,t,q)$ gives a lower bound on $v(K)$. We give examples to show that the bound on $v(K)$ obtained from $H_K(s,t,q)$ is sometimes stronger than the bound obtained from the arrow polynomial of Dye and Kauffman \cite{DK, BDK}.  For a virtual knot $K$ expressed as the closure of a virtual braid $\be$, the virtual Alexander invariant can be computed in terms of the virtual Burau representation $\Psi(\be)$, and using this approach we show how to define the normalized virtual Alexander polynomial ${\wh H}_K(s,t,q)$. In Theorem \ref{better-v-bound}, we show that the normalized invariant ${\wh H}_K(s,t,q)$ gives even stronger bounds on the virtual crossing number $v(K)$, and we present several examples where one obtains sharp results by applying this theorem.

Our approach to the virtual Alexander invariants is to define and compute them in terms of the virtual knot group $\VG_K,$ which is a new virtual knot invariant. Given any diagram of a virtual knot $K$, one can write down a presentation for $\VG_K$ in much the same way as is done for the classical knot group $G_K$. The generators of $\VG_K$ are given by all the short arcs of the diagram along with two auxiliary generators $s$ and $q$. The generator $s$ is used to relate adjacent arcs across over-crossings and the generator $q$ is used to relate adjacent arcs across virtual crossings. In order to obtain invariance under the generalized Reidemeister moves, it is necessary that the variables $s$ and $q$ commute. 

Although one can use biquandles (or virtual quandles) to define Alexander invariants of virtual knots (cf. \cite{KR} and \cite{FKM}), there are several advantages to our approach.
 One is that the virtual Alexander invariants can be computed using any presentation of the virtual knot group $\VG_K.$ Another is that it provides a common framework for Alexander invariants of virtual knots and links, and 
one could use the same ideas to develop Alexander invariants for many of the other group-valued invariants of virtual and welded knots (cf. \cite{BB}). Finally, our approach can be also used to define twisted virtual Alexander invariants associated to representations $\varrho \colon \VG_K \to GL_r(R)$ just as in the classical case.

Recall that twisted Alexander polynomials for knots in $S^3$ were first introduced by Lin \cite{Lin} using free Seifert surfaces, and  
Wada then extended the definition to finitely presented groups using the Fox differential calculus in \cite{Wada}. The resulting construction produces invariants of knots, links, and 3-manifolds that are remarkably strong and yet also highly computable. For instance, the twisted Alexander polynomials have been shown to detect the unknot and the unlink in \cite{SW-06, Friedl-Vidussi-unlink}, and to detect fibered 3-manifolds in \cite{Agol, Friedl-Vidussi}.  
The twisted Alexander polynomials have been applied to study slice  knots \cite{Kirk-Livingston-Twisted, Herald-Kirk-Livingston}, periodic knots  \cite{Hillman-Livingston-Naik}, and an interesting partial ordering on knots \cite{Kitano-Suzuki-Wada, Kitano-Suzuki, Horie-Kitano-Matsumoto-Suzuki}.     

In Section \ref{section-7}, we extend the notion of twisted Alexander polynomials to virtual knots and links, addressing a question posed in \cite{CHN}, and we show that the  resulting invariants often give stronger bounds on the virtual crossing number $v(K)$, see Theorem \ref{twist-v-bound}.  In Section \ref{section-8}, we show how to compute the twisted invariants in terms of braids by using the twisted virtual Burau representation. In Section \ref{section-9}, we examine how the invariants change under the virtual Markov moves and define a preferred normalization of the twisted virtual Alexander polynomial that has less indeterminacy and gives an improved bound on the virtual crossing number, see 
Theorem \ref{better-twist-v-bound}. 
 
Jeremy Green has classified virtual knots up to four crossings, and in this paper we will use his notation to refer to specific virtual knots in the table at \cite{green}.
 
\section{A brief introduction to virtual knots} \label{section-1}
In this section we give a quick introduction to the theory of virtual knots.

Virtual knot theory was first introduced by Kauffman in \cite{KVKT}. There are several equivalent approaches, and we define an oriented virtual link to be an equivalence class of oriented virtual link diagrams. A virtual link diagram is an immersion of one or several circles in the plane with only double points, such that each double point is either classical (indicated by over- and under-crossings) or virtual (indicated by a circle). An oriented virtual link diagram is a virtual link diagram endowed with an orientation for each component, and two virtual link diagrams are said to be equivalent if they can be related by planar isotopies and a series of \emph{generalized Reidemeister moves} ($r1$)--($r3$) and ($v1$)--($v4$) depicted in Figure \ref{VRM}. It was proved in \cite{GPV} that if two classical knot diagrams are equivalent under the generalized Reidemeister moves, then they are equivalent under the classical Reidemeister moves, and consequently this shows that the theory of classical knots embeds into the theory of virtual knots.

\begin{figure}[ht]
\centering
\def\svgwidth{350pt}
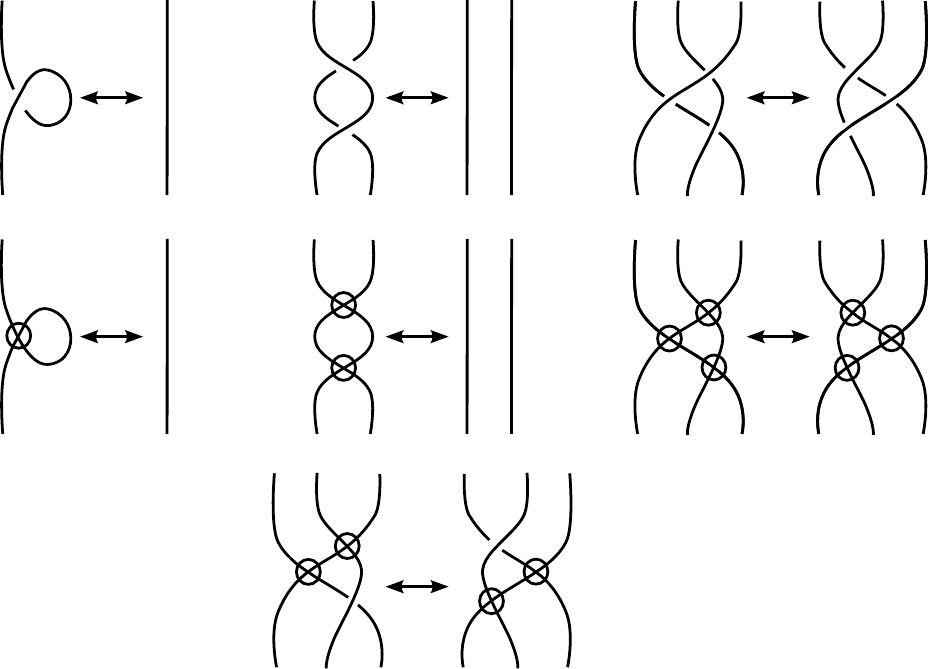
\caption{The generalized Reidemeister moves ($r1$)--($r3$) and ($v1$)--($v4$).}
\label{VRM}
\end{figure}

Virtual knots can also be described quite naturally as equivalence classes of Gauss diagrams. % Recall that the Gauss code of a classical knot $K$ is a word that indicates all the over- and under-crossing information as one traverses $K$. 
% Its Gauss diagram is a graphical representation of the same information. 
Given a knot $K$, its Gauss diagram consists of a circle, which represents points on $K$, along with directed chords from each over-crossing of $K$ to the corresponding under-crossing. The chords are given a sign $(\pm)$ according to whether the crossing is positive or negative. The Reidemeister moves can be translated into moves between Gauss diagrams, and in this way one can regard a classical knot as an equivalence class of Gauss diagrams. Every classical knot corresponds to some Gauss diagram, but not all Gauss diagrams are associated with a classical knot. This deficiency disappears by passing to the larger category of virtual knots, which admit the alternative definition as equivalence classes of Gauss diagrams. 

It is interesting that the Gauss diagram of a virtual knot does not explicitly indicate where to draw the virtual crossings; they are just a by-product of attempting to draw the virtual knot from its Gauss diagram.
For this reason, it is often useful to ignore the virtual crossings for the purpose of defining invariants and indeed
many invariants of classical knots can be extended to the virtual setting by means of this simple strategy.
Notable examples include the Jones polynomial and the knot group $G_K$.     
 
Virtual knots can be described geometrically as knots in thickened surfaces. Let $\Si_g$ be an oriented surface of genus $g$ and set $I = [0,1]$, and consider a link $L \subset \Si_g \times I$. A diagram of $L$ is obtained by projection onto a plane, keeping track of over- and under-crossings. In the case $g=0$, there is essentially no loss of information in projecting, and this corresponds to the case of a classical link. However, if $g \geq 1$, then projecting $L$ from the thickened surface to the plane may introduce additional self-intersections. These crossings correspond to virtual crossings, and in this way one obtains a virtual knot diagram from the projection of $L$ to the plane. 

In \cite{CKS}, the authors established a one-to-one correspondence between virtual links and stable equivalence classes of projections of links onto compact, oriented surfaces. (We refer the reader to \cite{CKS} for the definition of stable equivalence.) Thus,  
the following result, which is due to Kuperberg \cite{Kuperberg}, provides  a geometric interpretation of virtual knots and links in terms of knots in thickened surfaces.
 
\begin{theorem}
Every stable equivalence class of links in thickened surfaces has a unique irreducible representative.
\end{theorem}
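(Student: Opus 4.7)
The plan is to prove the theorem in two stages: existence of an irreducible representative in each stable equivalence class, and then uniqueness of that representative up to diffeomorphism of the thickened-surface pair.

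For existence, I would call $L\subset \Sigma\times I$ \emph{irreducible} if there is no essential simple closed curve $\gamma\subset \Sigma$ that bounds an embedded disk $D\subset (\Sigma\times I)\sm L$. Starting from any representative $L$, if it fails to be irreducible then some such $(\gamma, D)$ exists, and I destabilize along $D$ by cutting $\Sigma\times I$ open along $D$ and capping the two resulting copies of $\gamma$ with disks. The new thickened surface has strictly smaller genus and still contains $L$, giving a new representative of the same stable class. Since $g(\Sigma)$ is a nonnegative integer, the iteration terminates at an irreducible representative.

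For uniqueness, suppose $L_1\subset\Sigma_1\times I$ and $L_2\subset\Sigma_2\times I$ are two irreducible representatives of the same stable equivalence class. By the definition of stable equivalence there is a common stabilization $L'\subset\Sigma'\times I$; that is, $L'$ destabilizes to each $L_i$ via a disjoint family of destabilizing disks $\mathcal{D}_i\subset (\Sigma'\times I)\sm L'$. Put $\mathcal{D}_1$ and $\mathcal{D}_2$ in general position, so that $\mathcal{D}_1\cap\mathcal{D}_2$ is a 1-manifold of circles and arcs. I would then run an innermost-disk and outermost-arc surgery argument: choose an innermost circle of $\mathcal{D}_1\cap\mathcal{D}_2$, replace the corresponding subdisk in one system by a pushed-off copy of the innermost disk from the other, and iterate until the two systems are disjoint. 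Once they are disjoint, destabilize $L'$ simultaneously along $\mathcal{D}_1\cup\mathcal{D}_2$ to obtain a common further destabilization $L_0$ of both $L_1$ and $L_2$. Irreducibility of each $L_i$ then forces the leftover destabilization from $L_i$ down to $L_0$ to be trivial (the boundary curves involved must all be inessential on $\Sigma_i$), so $L_1\cong L_0\cong L_2$ as links in thickened surfaces.

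The main obstacle is the disjointification step. Each innermost-disk surgery must be verified to preserve the property of being a valid destabilizing system: the new boundary curves must remain essential on $\Sigma'$, the new disks must stay disjoint from $L'$, and after every surgery the overall intersection number strictly decreases. This is a Haken-style argument in the 3-manifold $(\Sigma'\times I)\sm L'$, carried out by induction on $|\mathcal{D}_1\cap\mathcal{D}_2|$. A related subtlety is that after disjointification the destabilizations along (the image of) $\mathcal{D}_2$ really must descend through $\Sigma_1\times I$ in the same way they do through $\Sigma'\times I$, so that the common destabilization $L_0$ genuinely lies below both $L_1$ and $L_2$ in the stable equivalence lattice. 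This simultaneous descent, together with the innermost-disk surgeries, is the technical heart of Kuperberg's original argument.
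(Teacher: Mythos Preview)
The paper does not give its own proof of this theorem. It is stated as a result due to Kuperberg \cite{Kuperberg} and used as a black box to define the genus of a virtual knot; no argument is supplied. So there is no ``paper's proof'' to compare your proposal against.

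As for your sketch itself: the overall architecture---existence by genus descent, uniqueness by producing a common destabilization of two irreducible representatives---is the right shape and is close in spirit to Kuperberg's argument. The genuine content, as you correctly flag, is the disjointification step and the claim that the two destabilizing systems can be made to commute. Kuperberg's actual proof is organized somewhat differently: rather than a direct innermost-disk surgery on competing disk systems, he works with the partially ordered set of destabilizations and proves a confluence (diamond) property, using incompressibility of vertical annuli in $\Sigma\times I$ and an argument about how two single destabilizations interact. Your Haken-style outline is plausible but would need care at exactly the point you identify: after an innermost-disk swap the modified disk may acquire an inessential boundary curve on $\Sigma'$, in which case it is no longer a destabilizing disk, and you must explain why this does not obstruct reaching a disjoint pair of \emph{genuine} destabilizing systems whose simultaneous effect dominates both originals. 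That bookkeeping is precisely what Kuperberg's confluence formulation packages more cleanly.
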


Using Kuperberg's theorem, we can define the genus $g(K)$ for any virtual knot $K$ to be the genus of the surface of its unique irreducible representative. Obviously $g(K)=0$ if and only if $K$ is classical. Another closely related invariant of virtual knots is its virtual crossing number, which is defined as follows. 
Given a virtual knot or link diagram $D$, let $v(D)$ denote the number of virtual crossings of $D$. For a virtual knot or link $K$,  the virtual crossing number $v(K)$ is defined to be the minimum of $v(D)$ over all diagrams $D$ representing $K$. Clearly $K$ is classical if and only if $v(K)=0.$ In general, we have the inequality $g(K) \leq v(K).$

\begin{figure}[ht]
\centering
\def\svgwidth{8cm}
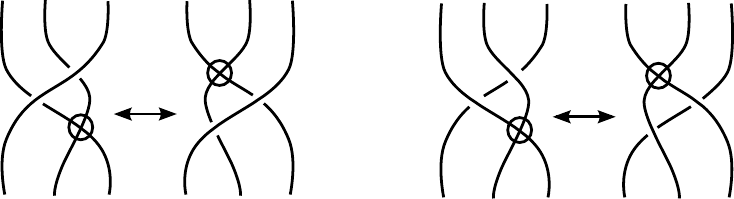
\caption{The two forbidden moves $(f1)$ and $(f2)$.}
\label{FRM}
\end{figure}    

Two virtual knots or links are said to be \emph{welded equivalent} if one can be obtained from the other by generalized Reidemeister moves plus the first forbidden move ($f1$) depicted in Figure \ref{FRM}. It was proved independently by Kanenobu and Nelson that  by allowing both forbidden moves $(f1)$ and $(f2)$, every virtual knot diagram can be shown to be equivalent to the unknot \cite{Kanenobu, Nelson-for}. Allowing only the first forbidden move $(f1)$, also called the forbidden overpass, leads to a nontrivial theory. A \emph{welded knot} is an equivalence class of virtual knot diagrams under the moves $(r1)$--$(r3), (v1)$--$(v4),$ and $(f1).$  
Given two virtual knots $K_1$ and $K_2$, we write $K_1 \weq K_2$ if $K_1$ is welded equivalent to $K_2$. 
In terms of Gauss diagrams, the first forbidden move corresponds to exchanging two adjacent arrow feet without changing their signs or arrowheads \cite{State}.  Therefore, a welded knot can also be viewed as an equivalence class of Gauss diagrams. In \cite{Rourke}, it is proved that two classical knots are welded equivalent if and only if they are equivalent as classical knots, and this shows that the theory of classical knots embeds into the theory of welded knots.

Many invariants in knot theory extend in a natural way to virtual knots, including the Jones polynomial and the knot group $G_K$. Since it is central to many of our later constructions, we review the definition of $G_K$. Recall that  for a classical knot $K \subset \RR^3$, the knot group $G_K = \pi_1(\RR^3 \sm K, x_0)$ is the fundamental group of the complement of the knot, relative to some basepoint $x_0 \in \RR^3$. The Wirtinger presentation for $G_K$, which depends on a diagram of $K$, has one generator for each arc and one relation for each crossing, depending on the sign of the crossing as indicated Figure \ref{KGR}.   

\begin{figure}[ht]
\centering
\includegraphics[scale=0.90]{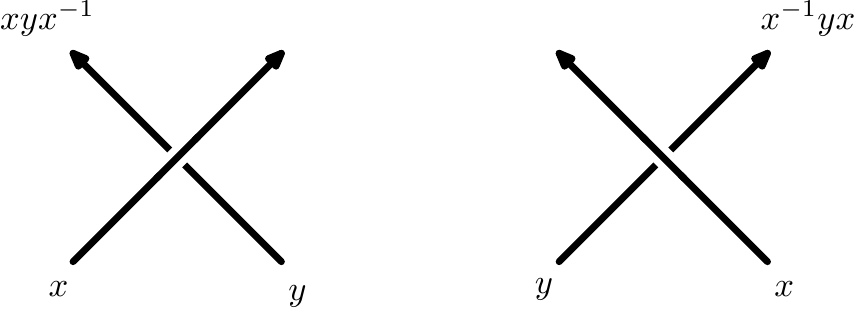}
\caption{Relations in the knot group $G_K$ at a classical crossing.}
\label{KGR}
\end{figure}

Even though virtual knots are not embeddings in $\RR^3$, one can apply the same procedure and associate to each virtual knot $K$ an abstract group $G_K$. The group $G_K$ has one generator for each \emph{long arc}, which is an arc that starts and ends at under-crossings, and one relation for each classical crossing, just as in   Figure \ref{KGR}. In this construction, one ignores the virtual crossings of $K$. The resulting group, denoted $G_K$, was first introduced by Kauffman in \cite{KVKT}, and it is given by the (upper) Wirtinger presentation.
If one regards $K$ as a knot in the thickened surface $\Si_g \times I$, then $G_K$ is the fundamental group of the complement of $K$ in $\Si_g \times I/\sim$,  where $x \sim y$ for all $x,y \in \Si_g \times \{0\}$; for a proof of this fact see \cite[Proposition 5.1]{Kamada-Kamada} by N.~Kamada and S.~Kamada. Using an analogous construction, one can define the \emph{lower} Wirtinger presentation, and geometrically it is the fundamental group of the complement of $K$ in $\Si_g \times I/\sim$,  where $x \sim y$ for all $x,y \in \Si_g \times \{1\}.$ It turns out that the lower group is the knot group of the knot obtained from $K$ by changing all the classical crossings. For classical knots, the upper and lower presentations give the same group, but for virtual knots this is no longer true. The first example of this was provided in \cite{GPV}. 

\begin{example} \label{virtual trefoil}
If $K$ is the knot depicted in Figure \ref{2-1}, then it has knot group $G_K \cong \ZZ.$ This follows easily from the relation $b=bab^{-1}$ at the first crossing. Since $K$ is a nontrivial virtual knot, this example shows that the knot group $G_K$ does not detect the unknot among virtual knots. In the next section, we introduce the virtual knot group $\VG_K$ and show it is nontrivial for this knot.
\end{example}

\begin{figure}[ht]
\centering
\includegraphics[scale=1.10]{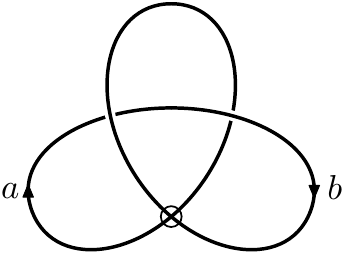}
\caption{A virtual knot with trivial knot group.}
\label{2-1}
\end{figure}

The knot group does not change under applying the forbidden move ($f1$), and thus $G_K$ is an invariant of the welded equivalence class of $K$.  That the knot in Example \ref{virtual trefoil} is welded trivial can be seen quite easily by considering its Gauss diagram. This can also be established by applying moves ($r1$)--($r3$), ($v1$)--($v4$) and ($f1$) to the virtual knot diagram in Figure \ref{2-1}, see \cite[Example 2.1]{Satoh}.  

%%%%%%%%%%%%%%%%%%%%%%%%%%%%%%%%%%%%%%%%%%%%%%%%%%%%%%%%%%%% 
 
\section{The virtual knot group} \label{section-2}

In this section we define the virtual knot group $\VG_K$ and show it is invariant under the generalized Reidemeister moves. We note that several authors have studied other group-valued invariants of virtual knots, and at the end of this section we present a commutative diamond relating $\VG_K$ to the extended group $\EG_K$ of Silver and Williams \cite{SW-Crowell} and the quandle knot group $\QG_K$ of Manturov \cite{M02} and Bardakov and Bellingeri \cite{BB}. Note that all the groups surject to $G_K$, and note further that although some authors refer to $G_K$ as the ``virtual knot group", we shall reserve that terminology exclusively for $\VG_K$ (cf. \cite{SW-VKG}).

Let $K$ be a virtual knot or link represented by a virtual knot diagram.
A \emph{short arc} is an arc that begins from one virtual or classical crossing  and terminates at the next one. In contrast to the classical knot group, short arcs terminate at under-crossings, over-crossings, and virtual crossings. The \emph{virtual knot group} $\VG_K$ has one generator for each short arc, along with two additional generators $s$ and $q$, and it has two relations for  each crossing, depending on the sign of the crossing as indicated in Figure \ref{Relations}, along with the commutation relation $[s,q]=1.$  

\begin{figure}[ht]
\centering
\includegraphics[scale=0.90]{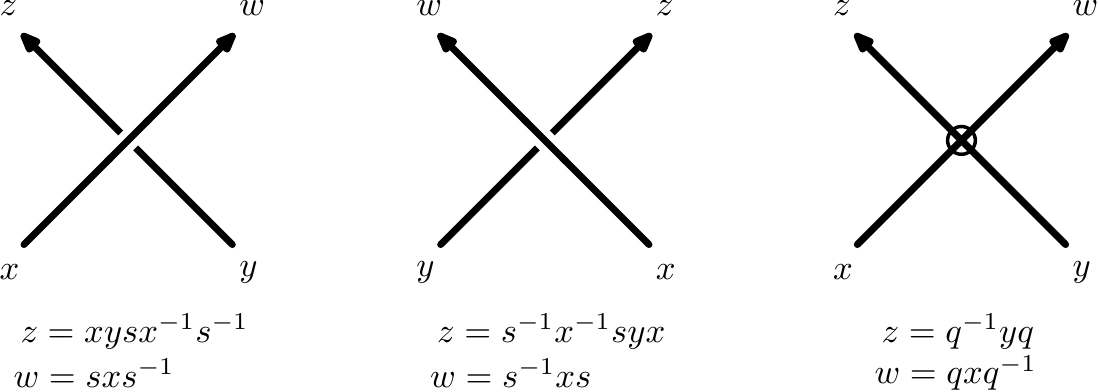}
\caption{}
\label{Relations}
\end{figure}

We will see that $\VG_K$ is independent under the generalized Reidemeister moves, but before doing that, we present a simple example illustrating how to compute $\VG_K$ from a diagram for $K$.

\begin{example} \label{virtual trefoil2}
We compute the virtual knot group for the knot depicted in Figure \ref{VirtualTrefoil}. Labeling the short arcs $a,b,c,d,e,f$ as in Figure \ref{VirtualTrefoil}, we obtain the following presentation.
\begin{eqnarray*}
\VG_K &=& 
\langle a,b,c,d,e,f,s,q \mid  [s,q]=1, b=e^{-1}ad, c=s^{-1}bs, d=q c q^{-1}, \\
&&  \hspace*{3.4cm} e=s^{-1}d s, f=c^{-1}eb, a=q^{-1}fq\rangle \\
&=& \langle a,b,s,q \mid   [s,q]=1,
a=q^{-1}s^{-1}b^{-1}qs^{-1}bs^2q^{-1}bq, \\
&&  \hspace*{2.0cm} b=qs^{-2}b^{-1}s^2 q^{-1} a qs^{-1}bsq^{-1}\rangle, \\
&=& \langle b,s,q \mid   [s,q]=1,
 b=qs^{-2}b^{-1}s q^{-2} b^{-1}qs^{-1}bs^2q^{-1}b q^2s^{-1}bsq^{-1}  \rangle.
\end{eqnarray*}
This group is nontrivial and can be used to show that $K$ is nontrivial as a virtual knot, see Example \ref{virtual trefoil3}.
\end{example}

\begin{figure}[ht]
\centering
\includegraphics[scale=1.10]{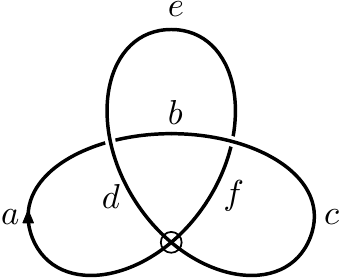}
\caption{The virtual trefoil with labellings of the short arcs.}
\label{VirtualTrefoil}
\end{figure} 

To verify that $\VG_K$ is an invariant of virtual knots, begin by considering the knot diagram of a virtual knot in a small neighborhood where the Reidemeister move is taking place.  Label arcs according to the definition of $\VG_K$, see Figure \ref{Relations}, and check to see that the knot diagram gives the same set of generators and relations before and after the move.  As an example, we verify in Figure \ref{InvarianceOfTheMixedMove} that the mixed Reidemeister move $(v4)$ holds.  The arcs are assumed to be oriented upwards in the figure, and note that the condition that $s$ and $q$ commute is necessary for invariance of ($v4$). However, it's worth mentioning that the moves ($r1$)--($r3$) and ($v1$)--($v3$) are in fact invariant without this condition.   

\begin{figure}[ht]
\centering
\includegraphics[scale=0.90]{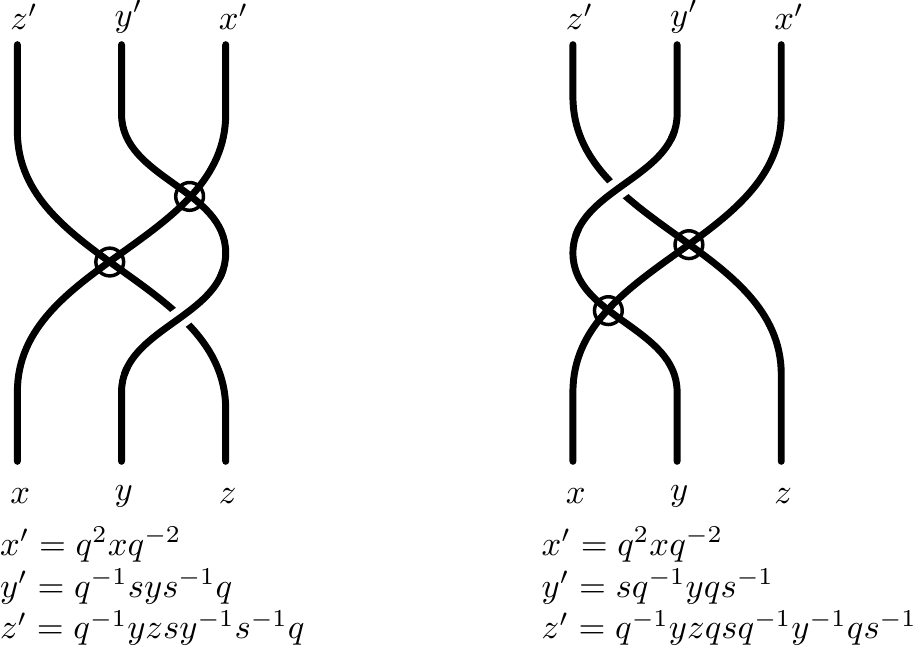}
\caption{Invariance of $\VG_K$ under ($v4$).}
\label{InvarianceOfTheMixedMove}
\end{figure} 

The virtual knot group $\VG_K$ has many interesting quotients.  By setting $s=1=q$, we recover the classical knot group $G_K$ as defined in the previous section.  One can also show that after setting $s=q$, the forbidden move ($f1$) holds. 
Thus the quotient of $\VG_K$ by the normal subgroup generated by $qs^{-1}$ is invariant under welded equivalence, and the \emph{welded knot group} is the quotient $$\WG_K=\VG_K / \langle\!\langle qs^{-1} \rangle\!\rangle,$$
where $\langle\!\langle \; \cdot \; \rangle\!\rangle$ denotes the normal closure.

\begin{example} 
The trivial knot $\bigcirc$   has virtual and welded knot groups given by
\begin{eqnarray*}
\VG_\bigcirc &=& \langle a,s,q \mid  [s,q]=1\rangle \cong \ZZ*\ZZ^2, \\
\WG_\bigcirc &=& \langle a,s \rangle \cong \ZZ*\ZZ.
\end{eqnarray*}
We say that $K$ has trivial virtual knot group if $\VG_K \cong \ZZ*\ZZ^2$, and that it has trivial welded knot group if $\WG_K \cong \ZZ* \ZZ$. There exist nontrivial virtual knots with trivial virtual knot group, for example the Kishino knot depicted in Figure \ref{Kishino}.
(In \cite{KS04}, Kishino and Satoh use the 3-strand bracket polynomial to show this knot is nontrivial.) Therefore $\VG_K$ does not detect the trivial knot among virtual knots. 
It can be shown that
$\WG_K \cong G_K* \ZZ$.  Hence $\WG_K$ and $G_K$ carry the same information about $K$.
It is an open problem whether $G_K$ (equivalently, $\WG_K$) detects the trivial knot among welded knots.
\end{example}

\begin{figure}[h]
\centering
\includegraphics[scale=1.60]{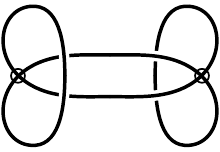}
\caption{The Kishino knot has trivial virtual knot group, trivial Jones polynomial, and trivial generalized Alexander polynomial.}
\label{Kishino}
\end{figure}

We now explain the relationships between the virtual knot group $\VG_K$ and various other groups that arise in virtual knot theory. For instance, under setting $q=1$ in $\VG_K$, one obtains the extended group $\EG_K$, which was introduced by Silver and Williams in \cite{SW-Crowell}, where it is denoted $\wt{\pi}_K$.  This group is related to the Alexander group, which Silver and Williams used to give a group-theoretic approach to the generalized Alexander polynomial, see \cite{SW-Alexander} and \cite{SW-03}. 

By setting $s=1$ in $\VG_K$, one obtains the quandle knot group $\QG_K$, which was introduced by Manturov \cite{M02} and by Bardakov and Bellingeri \cite{BB}.
Finally, as we have already mentioned, if one sets $q=s$ in $\VG_K$, one obtains the welded knot group $\WG_K$. 
Each of these groups has the classical knot group $G_K$ as a natural quotient, and the various relationships are summarized by the commutative diagram below.

Note that if $K$ is a classical knot or link, then none of the relations in the Wirtinger presentation for $\VG_K$ involve $q$. Thus, no information is lost in setting $q=1,$ and this shows that $\VG_K$ is determined by the extended group $\EG_K.$ As we shall see in the next section, the same is true at the level of Alexander invariants, namely that for a classical knot or link $K$, the virtual Alexander invariants are determined by the classical Alexander invariants. 

\begin{center}
\begin{tikzcd}[column sep=large] 
&\VG_K 
\arrow{ldd}{s=1}  \arrow{dd}{q=s}  \arrow{rdd}{q=1} \\ \\
\QG_K   \arrow{rdd}{q=1} & \WG_K \arrow{dd}{s=1} &\EG_K \arrow{ldd}{s=1}  \\ \\ 
&G_K
\end{tikzcd} \\
{A commutative diamond representing the natural surjections between the virtual, quandle, welded, extended, and classical knot groups.}
\end{center}

%%%%%%%%%%%%%%%%%%%%%%%%%%%%%%%%%%%%%%%%%%%%%%%%%%%%%%%%%%%
\section{The virtual Alexander invariant} \label{section-3}

In this section, we introduce the virtual Alexander invariants of a virtual knot $K$, and they are the Alexander invariants associated with the virtual knot group $\VG_K$. Our approach is motivated by the case of classical knots, where one obtains powerful  invariants of a knot $K$ by studying the elementary ideal theory of the knot group $G_K$. 
 
Given a virtual knot $K$, its virtual knot group $\VG_K$ has abelianization $\ZZ^3$, and we use $s,t,q$ to denote the three generators. Let $\VG_K'$ and $\VG_K''$ be first and second commutator subgroups of $\VG_K$,  and note that $\VG_K'$ is the kernel of $\VG_K \to \ZZ^3.$  The quotient $\VG_K'/ \VG_K''$ can be regarded as a module over the group-ring of $\ZZ^3,$ which is isomorphic to the ring of Laurent polynomials $\ZZ[s^{\pm 1}, t^{\pm 1}, q^{\pm 1}]$. We set $\cL=\ZZ[s^{\pm 1}, t^{\pm 1}, q^{\pm 1}]$ and we refer to the module associated to the abelianization $\VG_K \to \ZZ^3$
as the \emph{virtual Alexander module}.

One can extract a number of invariants of the virtual knot $K$ from its virtual Alexander module. For instance, if $M$ is any $m \times n$ presentation matrix for the virtual Alexander module,  then the $\ell$-th elementary ideals $\cE_\ell$ are defined as follows (see p.~101 of \cite{CF}). If $0 < n- \ell \leq m,$ we define $\cE_\ell$ to be the ideal of $\cL$ generated by  all $(n-\ell) \times (n-\ell)$ minors of $M$, otherwise we set $\cE_\ell  = 0$ if $n-\ell >m$ and $\cE_\ell = \cL$ if $ n-\ell \leq 0$. The elementary ideals are independent of the presentation matrix $M$, and they can be used to define invariants of the virtual knot $K$. However, the elementary ideals $\cE_\ell$ are not generally principal, and so we shall instead consider $\ell$-th principal elementary ideals $\cI_\ell$, which are defined to be the smallest principal ideals containing $\cE_\ell$. Thus $\cI_\ell$ is generated by the $\gcd$ of the $(n-\ell) \times (n-\ell)$ minors of $M$.
This is the basis for our definition of the virtual Alexander polynomial.

\begin{definition} Given a virtual knot or link $K$, let $\Delta^\ell_{K}$ be the Laurent polynomial given by the $\gcd$ of the $(n-\ell) \times (n-\ell)$ minors of $M$. It is a generator of the $\ell$-th principal elementary ideal $\cI_\ell,$
and it is well-defined up to units in $\cL$, namely up to multiplication by $\pm s^i t^j q^k$ for $i,j,k \in \ZZ.$ 
We call $\Delta^\ell_{K}$ the {\it $\ell$-th virtual Alexander polynomial of $K$}. The virtual Alexander polynomial $\Delta^0_{K}$  will often be denoted $H_K(s,t,q)$. 
 \end{definition}
 
We adopt notation to suppress the inherent indeterminacy of the virtual Alexander polynomial, namely we write $H_K(s,t,q) = f$ whenever $f \in \cL$ is a Laurent polynomial such that $H_K(s,t,q) = \pm s^i t^j q^k \cdot f$ for some $i,j,k \in \ZZ.$

\begin{remark} In Section \ref{section-5} we will show that there is a normalization of the virtual Alexander polynomial to make it well-defined up to multiplication by $(st)^i$. 
\end{remark}

\begin{example} \label{virtual trefoil3}
Using the presentation for $\VG_K$ from Example \ref{virtual trefoil2}, one can easily compute that the virtual Alexander polynomial for the virtual trefoil knot $K$ is given by
\begin{eqnarray*}
H_K(s,t,q) &=& s^{-1} + q^2s^{-2}t + qt^2 -q^2s^{-1}t^2 -t -qs^{-2}\\
&=&s +q^2t+qs^2t^2 -q^2st^2-s^2t-q\\
&=&(s-q)(1-st)(1-qt).\end{eqnarray*}
\end{example}

It is a basic fact that the elementary ideals are independent of the presentation matrix. Given any presentation 
\begin{equation}\label{present}
\VG_K = \langle a_1, \ldots a_n, s,q \mid  r_1, \ldots, r_n, [s,q]  \rangle,
\end{equation} 
one obtains a presentation matrix for the virtual Alexander module $\VG_K'/\VG_K''$ by taking Fox derivatives of the relations in \eqref{present} with respect to the generators and substituting $a_i=t$ for $i=1,\ldots, n.$ The resulting matrix is an $(n+2) \times (n+1)$ matrix of Laurent polynomials $\cL$ which we will denote by $M$. The $0$-th elementary ideal $\cE_0$ is the ideal generated by all $(n+1) \times (n+1)$ minors of $M$, and it is not necessarily principal. However, one can nevertheless compute the virtual Alexander polynomial entirely in terms of the virtual Alexander matrix $A$,  which is the $n \times n$ Jacobian matrix 
$$A=\left(\left.\frac{\partial r_i}{\partial a_j}\right|_{a_1, \ldots, a_n=t} \right)$$
of Fox derivatives of the relations $r_i$ with respect to the generators $a_j$. We take a moment to explain this rather delicate point here.

The virtual Alexander matrix $A$ appears in the upper left hand corner of the presentation matrix $M$. To be specific, let
$\left(\frac{\partial r^{}}{\partial s_{}}\right) $ and $\left(\frac{\partial r}{\partial q} \right)$ denote the column vectors of Fox derivates of the relations $r_1, \ldots, r_n$ with respect to $s$ and $q$, so  $\left(\frac{\partial r^{}}{\partial s_{}}\right) $ has $i$-th entry equal to $ \left. \frac{\partial r_i}{\partial s}\right|_{a_1, \ldots, a_n=t}$ and $\left(\frac{\partial r}{\partial q}\right) $ has $i$-th entry equal to $ \left. \frac{\partial r_i}{\partial s}\right|_{a_1, \ldots, a_n=t}$. Then
$$M = \begin{bmatrix} A & \left(\frac{\partial r^{}}{\partial s_{}} \right) & \left(\frac{\partial r}{\partial q}\right) \\ {\bf 0} & 1-q & s-1 \end{bmatrix},$$
where the last row is obtained by Fox differentiation of the commutator $[s,q].$

Computing the $(n+1) \times (n+1)$ minors of $M$ involves choosing one column to eliminate and taking the determinant. If either of the last two columns is chosen, one obtains minors equal to  $(s-1) \det A$ and $(1-q) \det A$, respectively. If any one of the other columns is chosen, we claim that the resulting minor equals $\pm (t-1) \det A.$

The {\it fundamental identities}  \cite[(2.3)]{Fox1953} for the Fox derivatives of the relators $r_i$
are equations in the integral group-ring of the free group $F_{n+2} = \langle a_1,\ldots, a_n, s, q\rangle$, namely
\begin{equation}\label{fundamental-Fox}
\sum^n_{j=1}  \frac{\partial r_i}{\partial a_j} (a_j - 1)  +  \frac{\partial r_i}{\partial s} (s - 1) +  \frac{\partial r_i}{\partial q} (q - 1) = r_i -1, \qquad \text{$i=1,\ldots, n$.}
\end{equation}

Let $A_j$ denote the $j$-th column of $A$. 
The map $a_i \mapsto t$, $s \mapsto s$ and $q \mapsto q$ extends to a ring
homomorphism  $\ZZ F_{n+2}  \rightarrow \cL$.  Applying this homomorphism
to \eqref{fundamental-Fox} yields the following linear relation among the columns of $M$.
\begin{equation}\label{fundamental-Fox-two}
\sum^n_{j=1} A_j(t-1) + \left( \frac{\partial r^{}}{\partial s_{}} \right)(s-1) + \left( \frac{\partial r}{\partial q} \right)(q-1) ={\bf 0}.
\end{equation}

Let $\wh{M}_k$ denote the result of removing the $k$-th column of $M$ and let $\wh{A}_k$ denote the result of removing the $k$-th column of $A$. Then
$$\wh{M}_k = \begin{bmatrix} \wh{A}_k & \left( \frac{\partial r^{}}{\partial s_{}} \right) & \left(\frac{\partial r}{\partial q}\right) \\ {\bf 0} & 1-q & s-1 \end{bmatrix}.$$
We calculate the determinant of $\wh{M}_k$ by expansion along its last row.
\begin{eqnarray*}
\det \wh{M}_k &=& -(1-q) \det\left[ \wh{A}_k  \,\,\, \left( \tfrac{\partial r}{\partial q}\right) \right] ~+~  (s-1) \det\left[ \wh{A}_k  \,\,\, \left( \tfrac{\partial r^{}}{\partial s_{}}\right) \right] \\
                      &=&   \det\left[ \wh{A}_k  \quad (s-1)\left( \tfrac{\partial r^{}}{\partial s_{}}\right)  + (q-1)\left( \tfrac{\partial r}{\partial q}\right) \right] \\
                      &=&   \det\left[ \wh{A}_k  \quad \sum^n_{j=1} (1-t) A_j  \right] \qquad  \text{by \eqref{fundamental-Fox-two}} \\
                      &=&  \sum^n_{j=1} (1-t) \det\left[ \wh{A}_k \,\,\, A_j  \right]
                      =  (-1)^{n-k+1}(t-1) \det A.
\end{eqnarray*}
This shows that $\cE_0$ is generated by $(s-1) \det A, (t-1) \det A,$ and $(q-1) \det A.$ Since  the ring $\cL=\ZZ[s^{\pm 1}, t^{\pm 1}, q^{\pm 1}]$ of Laurent polynomials is a unique factorization domain, it is also a $\gcd$ domain, and consequently the principal generator of $\cI_0$ is the $\gcd$ of the generators of $\cE_0,$ which in this case one can see directly equals $\det A.$ Thus it follows that $H_K(s,t,q) = \det A.$
 
A similar argument shows that the higher Alexander invariants $\Delta^\ell_{K}$ are given by the $\gcd$ of the $(n-\ell) \times (n-\ell)$ minors of $A$.

Since the virtual Alexander polynomial $H_K(s,t,q)$ is only well-defined up to multiplication by $\pm s^i t^j q^k, $ it is convenient to work with polynomials instead of Laurent polynomials, so we will often write $H_K(s,t,q)$ without negative powers of $s, t, q$ by multiplying through by appropriate powers of $s,t$ or $q$. We define the $q$-width of $H_K(s,t,q)$ to be the difference between the maximal $q$-degree and the minimal $q$-degree. 
 
The next result shows that the $q$-width of $H_K(s,t,q)$ gives a lower bound on the virtual crossing number $v(K)$ of $K$ which is defined in the introduction.
   
\begin{theorem} \label{v-bound}
If $K$ is a virtual knot or link and $H_K(s,t,q)$ is its virtual Alexander polynomial, then  
$$\text{$q$-$\width$} \; H_K(s,t,q) \leq 2 v(K).$$  
\end{theorem}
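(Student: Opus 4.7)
The plan is to fix a diagram $D$ of $K$ realizing $v(K) = v$ virtual crossings (and some number $c$ of classical crossings), and to bound the $q$-width of $H_K(s,t,q) = \det A$ directly from the structure of the virtual Alexander matrix $A$ of $D$. Since such a diagram exists by the definition of $v(K)$ and the polynomial is diagram-independent, this will yield the theorem.

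With $n = 2(c+v)$ short arcs as generators, $A$ is an $n\times n$ matrix having one row per Wirtinger-type defining relation. The key observation is that the $q$-content of $A$ is governed entirely by the virtual crossings: at each classical crossing the two relations involve only $s$ and the short arcs (see Figure \ref{Relations}), so the corresponding rows lie in $\ZZ[s^{\pm 1}, t^{\pm 1}]$ and contribute $q$-degree $0$. A direct Fox-differentiation of the two relations at a virtual crossing shows that each such row has exactly two nonzero entries, one of $q$-degree $0$ and one of $q$-degree $\pm 1$. Moreover, the pair of rows coming from a single virtual crossing is arranged so that one row carries a $q^{+1}$ entry while the other carries a $q^{-1}$ entry; geometrically, the two outgoing arcs at a virtual crossing are conjugated by $q$ and by $q^{-1}$ respectively.

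Now expand $\det A$ as a signed sum over permutations of $\{1, \dots, n\}$. The $q$-degree of each summand is the sum, over all rows, of the $q$-degree of the entry selected from that row. Rows coming from classical crossings contribute $0$. For the two rows coming from a single virtual crossing, the combined contribution lies in $\{-1,\, 0,\, +1\}$: choosing the $q^0$ entry in both rows, or choosing the $q^{+1}$ entry in one row together with the $q^{-1}$ entry in the other, yields $0$; while choosing one $q^0$ entry and one $q^{\pm 1}$ entry yields $\pm 1$. Summing over all $v$ virtual crossings, each permutation summand has $q$-degree in $[-v,\,v]$, so
$$\text{$q$-$\width$}\; H_K(s,t,q) \;=\; \text{$q$-$\width$}\; \det A \;\leq\; 2v \;=\; 2\, v(K).$$

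The main obstacle is establishing the opposite-sign pairing of the $q^{\pm 1}$ entries within each virtual crossing: without this cancellation one would obtain only the weaker bound $4v(K)$. The pairing is already built into the Wirtinger-type relations displayed in Figure \ref{Relations}, and the proof amounts to reading it off cleanly from the Fox calculus of those two relations.
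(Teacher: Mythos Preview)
Your proof is correct and follows essentially the same route as the paper: pick a diagram with $v=v(K)$ virtual crossings, observe that only the $2v$ rows of the virtual Alexander matrix coming from virtual crossings carry any $q$-dependence, and bound the $q$-width of $\det A$ row by row via the permutation expansion. The paper's proof is a one-line version of what you wrote.

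One correction, though: your final paragraph misidentifies the ``main obstacle.'' The opposite-sign pairing you describe is true, but it is \emph{not} needed for the bound $2v(K)$. What actually drives the bound is that \emph{each individual row} from a virtual crossing has exactly two nonzero entries, one of $q$-degree $0$ and one of $q$-degree $\pm1$, so each such row has $q$-range exactly $1$. If $k$ of the $2v$ rows carry a $q^{+1}$ entry and the remaining $2v-k$ carry a $q^{-1}$ entry, then every permutation term has $q$-degree in $[-(2v-k),\,k]$, giving $q$-width at most $2v$ regardless of $k$. No information about how the $+1$ and $-1$ rows are distributed among the crossings is required. The hypothetical weaker bound $4v(K)$ would only threaten if a single row could contain both a $q^{+1}$ and a $q^{-1}$ entry (i.e., have $q$-range $2$), but the Fox derivatives of a relation of the form $y=q^{\pm1}xq^{\mp1}$ never produce that. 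So your pairing argument is valid but superfluous; the paper's terser version simply notes that there are $2v$ rows with a $q$ or $q^{-1}$ entry and concludes.
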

\begin{proof}
Suppose $D$ is a virtual knot diagram of $K$ with $v(D) = m$. Then the virtual Alexander matrix has $2m$ rows with a $q$ or $q^{-1}$ entry. It follows that $\det A$ has  $q$-width at most $2m.$
\end{proof}

We give some examples to illustrate the utility of the bound on $v(K)$ coming from the virtual Alexander polynomial.
In section \ref{section-7}, we will show how to improve these bounds using the twisted Alexander polynomial.
\begin{example}
Let $K$ be the virtual knot depicted in Figure \ref{3-7}. An easy computation shows that 
$$H_K(s,t,q) =  
(q^2 - s^2) (t^2 q^2 - 1) (st - 1).$$
Since $H_K(s,t,q)$ has $q$-$\width =4$, it follows that $v(K) \geq 2.$ Comparing to Figure \ref{3-7}, we conclude that $K$ has virtual crossing number 2.
\end{example}

\begin{figure}[ht]
\centering
\includegraphics[scale=1.80]{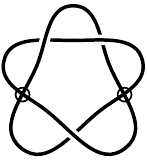}
\caption{The virtual knot $3.7$ has $v(K)=2$.}
\label{3-7}
\end{figure}

\begin{example}
Let $K$ be the virtual knot depicted in Figure \ref{4-62}. A straightforward computation shows that 
$$H_K(s,t,q) =   
-(q^3 t + q^2 st + q^2 + 3qs + s^2)(tq - 1)(st - 1)(q - s).$$
Since $H_K(s,t,q)$ has $q$-$\width=5$, it follows that $v(K) \geq 3.$ Comparing to Figure \ref{4-62}, we conclude that $K$ has virtual crossing number 3.
\end{example}

\begin{figure}[ht]
\centering
\includegraphics[scale=1.00]{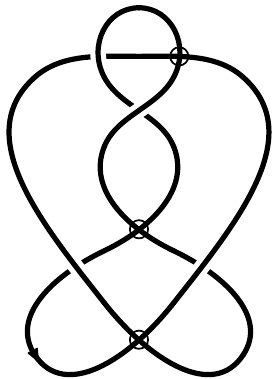}
\caption{The virtual knot $4.62$ has $v(K)=3$.}
\label{4-62}
\end{figure}

The arrow polynomial of Dye and Kauffman \cite{DK} also gives an effective lower bound on the virtual crossing number $v(K)$, see for instance \cite{BDK}. The following examples shows that the lower bound on $v(K)$ from $H_K(s,t,q)$ sometimes gives an improvement over that obtained from the arrow polynomial.

\begin{example}
According to \cite{BDK}, there are four virtual knots with 4 crossings having trivial arrow polynomial. They are the knots 4.46, 4.72, 4.98, and 4.107. Both 4.72 and 4.98 have $H_K(s,t,q)=0$, but for the other two virtual knots, we use the virtual Alexander polynomial to get a lower bound on $v(K)$. 

For $K = 4.46,$ one computes that
$$H_K(s,t,q) = (q - s) (tq - 1) (st - 1)^2$$
and  has $q$-$\width  = 2.$ It follows that $v(K) \geq 1.$ Thus $v(K)=1$ or 2.

For $K = 4.107,$ one computes that
$$H_K(s,t,q) = (q^2 - s^2) (t^2 q^2 - 1) (st - 1)^2$$
and has $q$-$\width = 4.$ It follows that $v(K) \geq 2.$ Thus $v(K)=2$ or 3.
\end{example}

\begin{figure}[ht]
\centering
\includegraphics[scale=1.60]{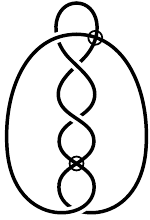} \hspace{2cm}
\includegraphics[scale=1.60]{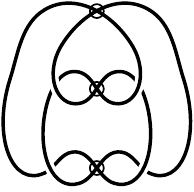}
\caption{The virtual knots $4.46$ (left) and $4.107$ (right).}
\label{4-46and4-107}
\end{figure}

Next, we relate the virtual Alexander polynomial to the generalized Alexander polynomial defined by Sawollek \cite{Sawollek}, Kauffman--Radford \cite{KR}, and Silver--Williams \cite{SW-Alexander, SW-03}.
Consider the presentation of $\VG_K$ obtained from the virtual knot diagram of $K$. This presentation has one meridional generator for each short arc and two relations for each crossing. The Alexander matrix one obtains from this presentation coincides, after setting $q=1$, with the presentation matrix one gets from the Alexander biquandle. To see this, take the Fox derivatives of the relations coming from the right-handed crossing in Figure \ref{Relations}.  Let $z=xysx^{-1}s^{-1}$ and
$w = sxs^{-1}$, then 
$$dw = \left(\frac{\partial w }{\partial x}\bigg|_{x,y=t}\right) x  + \left(\frac{\partial w}{\partial y}\bigg |_{x,y=t}\right)   y =\,\,\,  sx$$ 
and
$$dz = \left(\frac{\partial z }{\partial x}\bigg|_{x,y=t}\right) x  + \left(\frac{\partial z}{\partial y}\bigg |_{x,y=t}\right)   y =\,\,\,  (1-st)x+ty.$$
Similar formulas hold for left-handed crossings, and the resulting equations are identical to the ones coming from the Alexander biquandle, see   \cite[Definition 2.4]{CHN}.

The generalized Alexander polynomial $G_K(s,t)$ of a virtual knot or link is defined as the determinant of the  presentation matrix of the Alexander biquandle, and together with the above observations, this immediately implies the following result.

\begin{proposition} \label{HdeterminesG}
For any virtual knot or link K,  the generalized Alexander polynomial is determined by the
virtual Alexander polynomial via the formula
$$G_K(s,t) = H_K(s,t,1).$$
\end{proposition}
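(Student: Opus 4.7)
The plan is to assemble the identification crossing-by-crossing at the level of presentation matrices, and then read off the determinants. Start from a virtual knot diagram $D$ of $K$ and write down the Wirtinger-style presentation of $\VG_K$ described in Section \ref{section-2}: one generator $a_i$ per short arc, plus the auxiliary generators $s,q$, together with two relations per classical crossing of the form shown in Figure \ref{Relations}. By the computation already carried out in Section \ref{section-3}, the virtual Alexander polynomial satisfies $H_K(s,t,q) = \det A$, where $A$ is the $n\times n$ Jacobian of Fox derivatives of the crossing relations with respect to the short-arc generators, evaluated under the abelianization $a_i \mapsto t$.

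The key step is to show that the specialization $q=1$ converts $A$ into the presentation matrix of the Alexander biquandle of $D$. This is purely local: the biquandle matrix has two rows per classical crossing and one column per short arc, and it suffices to match the two rows contributed by each crossing. For the positive crossing, the excerpt already verifies this: writing $w=sxs^{-1}$ and $z=xys x^{-1}s^{-1}$, the abelianized Fox derivatives give $dw = sx$ and $dz = (1-st)x + ty$, which are exactly the rules defining the Alexander biquandle in \cite[Definition 2.4]{CHN}. I would carry out the analogous calculation for the negative crossing, where the two relations are obtained from Figure \ref{Relations} by reversing the role of over- and under-strands; a short differentiation shows the rows match the biquandle rules there as well. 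Since the two auxiliary generators $s,q$ do not appear as column labels in either matrix (the biquandle matrix has no $q$-column to begin with, and $s$ is treated as a scalar in both), no extra columns need to be compared.

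Once the matrices agree after $q=1$, the determinants agree as Laurent polynomials in $\ZZ[s^{\pm 1},t^{\pm 1}]$. Since $G_K(s,t)$ is by definition the determinant of the Alexander biquandle presentation matrix (well-defined up to units), and $H_K(s,t,q)|_{q=1} = \det A|_{q=1}$, we conclude
\[
G_K(s,t) = H_K(s,t,1)
\]
as elements of $\ZZ[s^{\pm 1},t^{\pm 1}]/\{\pm s^i t^j\}$.

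The main obstacle is notational rather than conceptual: one must be careful to orient the strands consistently at each crossing and to track which generator sits at which end of the crossing, so that the abelianized Fox derivatives line up with the biquandle operations on the nose. Once the positive and negative crossing cases are verified locally, the global statement is an immediate consequence of the fact that the virtual Alexander matrix is block-diagonal in the contributions of the crossings, exactly as the biquandle matrix is.
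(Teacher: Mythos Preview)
Your approach is the same as the paper's: compare the virtual Alexander matrix $A$ at $q=1$ to the Alexander biquandle presentation matrix crossing-by-crossing, then take determinants. There is one genuine omission, however. You write that the presentation of $\VG_K$ has ``two relations per classical crossing,'' and that the biquandle matrix has ``two rows per classical crossing.'' In fact both matrices have two rows per crossing, classical \emph{and} virtual. The virtual-crossing relations in $\VG_K$ (Figure~\ref{Relations}) are exactly where $q$ enters; upon setting $q=1$ they become the swap relations, which match the Alexander biquandle's rule at a virtual crossing. Without including these rows your matrix $A$ would not even be square --- it would have $2c$ rows but $2(c+v)$ short-arc columns --- so $\det A$ would be undefined. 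The missing check is immediate, but it must be stated.

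A smaller point: your closing remark that the matrix is ``block-diagonal in the contributions of the crossings'' is not accurate. The rows are indeed local to crossings, but the columns (one per short arc) are shared between adjacent crossings, so the matrix is not block-diagonal. All you actually need, and all you have shown, is that the rows agree locally; that suffices once the virtual-crossing case is added.
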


 If $K$ is a classical knot or link, then it follows that the virtual Alexander polynomial vanishes. To see this, suppose $K$ is a virtual knot with nonzero virtual Alexander polynomial. Then Proposition \ref{H_K-divisibility} implies that $H_K(s,t,q)$ has nonzero $q$-width, and Theorem \ref{v-bound} applies to show that $v(K)\geq 1.$ Thus $K$ is not classical.  
 
For virtual knots or links $K$ with trivial virtual Alexander polynomial, one can often use the virtual Alexander invariant to conclude that $K$ is not classical (cf. \cite[Corollary 4.8]{SW-Crowell}).
The idea is based on the observation that the virtual Alexander invariant of a classical
knot is determined from the classical Alexander invariant under replacing $t$ by $st$. 
Thus, if $K$ is classical, then its $k=1$ elementary ideal $\cE_1$ must be principal with a generator that is symmetric under $s \leftrightarrow s^{-1}$ and $t \leftrightarrow t^{-1}$. 

The next example illustrates this point for the virtual knots $4.99$ and $4.105$ depicted in Figure \ref{almostclassical}. 

\begin{example} \label{ex-ac}
Using the short arcs labelled $a$ and $b$ in Figure \ref{almostclassical}, we determine labels for all the other short arcs, and consequently we see can deduce presentations for the virtual knot groups $\VG_K$ with generators $a,b,s,q.$

For $K_1 = 4.99,$ we find that
$$ \VG_{K_1} = \langle a,b,s, q \mid  [s,q], \; %b^{-1}a^{-1} s^{-1}b s^2 a s^{-1} b^{-1} s a s^{-1},  
a b s a^{-1} s^{-1} b  s a^{-1} s^{-2}b^{-1} s  \rangle.
$$
Notice that the variable $q$ does not appear in the last two relations. Taking the Fox derivatives, one can show that this knot has trivial virtual Alexander polynomial and that its $k=1$ elementary ideal is given by $\cE_1=(1-2st)$. Although $\cE_1$ is principal, the generator is not symmetric, and  it follows that $4.99$ is not classical. 

We claim that the virtual knot group splits as a free product, $\VG_{K_1} \cong G_{K_1} * \ZZ^2$. To see this, cyclically permute the relations and make the substitutions $x=sa, y=bs$. The new presentation of $\VG_{K_1}$ is given by 
\begin{eqnarray*}  \VG_{K_1} &=& \langle x,y,s, q \mid  [s,q], \; %y^{-1}x^{-1} y x y^{-1} x, \; 
x y x^{-1} y  x^{-1} y^{-1}  \rangle \\
&\cong&  \langle x,y \mid   %y^{-1}x^{-1} y x y^{-1} x, \; 
x y x^{-1} y  x^{-1} y^{-1}  \rangle *  \langle s, q \mid  [s,q]  \rangle.
\end{eqnarray*}
It is a simple exercise to check the first presentation yields the knot group $G_{K_1}$.

For $K_2 = 4.105,$ we find that
\begin{eqnarray*}  \VG_{K_2} = \langle a,b,s, q & \mid & [s,q], \; a^{-1}s^{-2} b^{-1}a^{-1} s^{-1} b s^2 a b s a^{-1} s^{-2} b^{-1} s a b s, \\   
&& \hspace{1.2cm} s^{-1} b^{-1}a^{-1} s^{-1} b s a^{-1} s^{-2} b^{-1} s a b s^2 a s^{-1} b^{-1} s a  \rangle.
\end{eqnarray*}
Notice that again the variable $q$ does not appear in the last two relations. Taking the Fox derivatives, one can show that this knot has trivial virtual Alexander polynomial and that its $k=1$ elementary ideal is given by $\cE_1=( 2-2st+s^2t^2)$. Although $\cE_1$ is principal, the generator is not symmetric, and it follows that $4.105$ is not classical. 

We claim that the virtual knot group splits as a free product, $\VG_{K_2} \cong G_{K_2} * \ZZ^2$. To see this, substitute $x=sa, y=bs$. The new presentation of $\VG_{K_2}$ is given by 
\begin{eqnarray*}  \VG_{K_2} &=& \langle x,y,s, q  \mid   [s,q], \; x^{-1} y^{-1}x^{-1} y x y x^{-1} y^{-1} x y, \; y^{-1}x^{-1} y  x^{-1} y^{-1}  x y x y^{-1}  x  \rangle \\
 &\cong&  \langle x,y \mid   x^{-1} y^{-1}x^{-1} y x y x^{-1} y^{-1} x y, \; y^{-1}x^{-1} y  x^{-1} y^{-1}  x y x y^{-1}  x    \rangle *  \langle s, q \mid  [s,q]  \rangle.
\end{eqnarray*}
We leave it as an exercise to check that the first presentation yields the knot group $G_{K_2}$.
\end{example}

\begin{figure}[ht]
\centering
\includegraphics[scale=1.40]{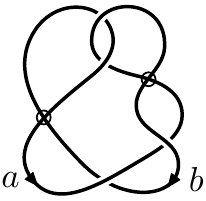} \hspace{1.6cm}
\includegraphics[scale=1.40]{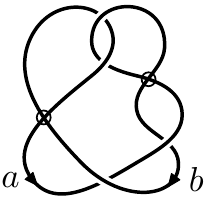}
\caption{The virtual knots $4.99$  (left) and $4.105$ (right).}
\label{almostclassical}
\end{figure}

\begin{remark}
Both virtual knots in the previous example are \emph{almost classical}.
This means that they possess diagrams that admit {\it Alexander numberings} (see \cite[Definition 4.3]{SW-Crowell}).
Alexander proved that every classical knot admits an Alexander numbering, thus every classical knot is almost classical.
Furthermore, since a classical knot can be realized by a knot diagram without virtual crossings, it is evident that one can eliminate the variable $q$ from the Wirtinger relations of $\VG_K$. 
If the virtual knot $K$ is almost classical, then by an Alexander numbering argument, it can be shown that $\VG_K \cong G_K \ast \ZZ^2$.
\end{remark} 
 
%%%%%%%%%%%%%%%%%%%%%%%%%%%%%%%%%%%%%%%%%%%%%%%%%%%%%%%%%%%

\section{The virtual braid group and the virtual Burau representation} \label{section-4}
In this section, we introduce the virtual braid group and its Burau representation and show that they are closely related to the Alexander polynomial of a virtual knot or link obtained as a braid closure. 

The {\it virtual braid group on $k$ strands}, denoted $\VB_{k}$,
is the group is generated by symbols $\si_{1}, \ldots, \si_{k-1}, \tau_1,\ldots, \tau_{k-1}$ subject to the relations given below in \eqref{classical-rel}, \eqref{virtual-rel}, \eqref{mixed-rel}. Here, $\si_{i}$ represents a classical crossing  and $\tau_i$ represents a virtual crossing involving the $i$-th and $(i+1)$-th strands as in Figure \ref{generators}. Virtual braids are drawn from top to bottom, and the group operation is given by stacking the diagrams, and the closure of a virtual braid represents a virtual link.

\begin{figure}[ht]
\centering
\includegraphics[scale=1.00]{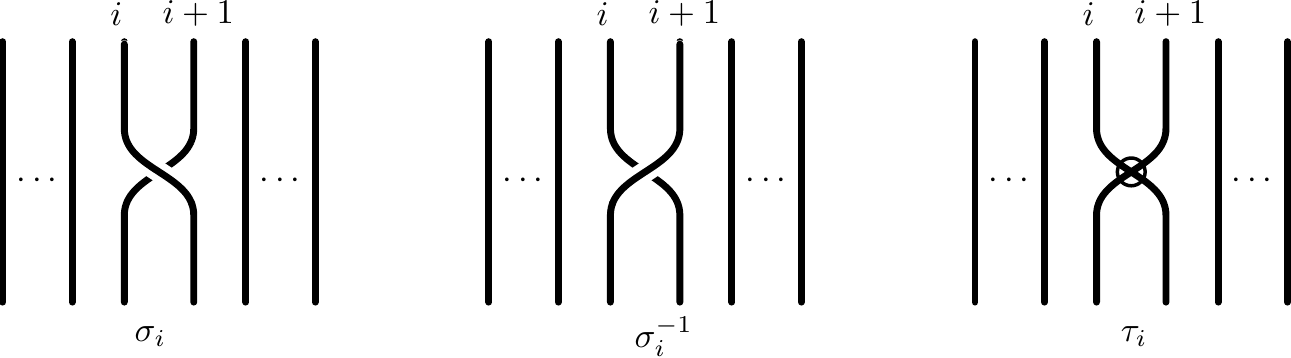} 
\caption{Generators of $\VB_{k}$.}
\label{generators}
\end{figure}  

The relations in $\VB_{k}$ are given by three families of relations, the first involve only classical crossings $\si_i$ and are the same as in classical braid group $B_k$, the second involve only virtual crossings $\tau_i$ and are the same as in the permutation group $S_k$, and the third are the mixed relations and involve both classical and virtual crossings:
\begin{equation} \label{classical-rel}
\begin{array}{rcl}
\si_{i}\si_{j}&=&\si_{j}\si_{i} \hspace{1.5cm} \text{ if $|i-j|>1$,} \\
\si_{i}\si_{i+1}\si_{i}&=&\si_{i+1}\si_{i}\si_{i+1},
\end{array}
\end{equation}
\begin{equation} \label{virtual-rel}
\begin{array}{rcl}
\tau_{i}\tau_{j}&=&\tau_{j}\tau_{i} \hspace{1.5cm} \text{ if $|i-j|>1$,} \\
\tau_{i}\tau_{i+1}\tau_{i} &=& \tau_{i+1}\tau_{i}\tau_{i+1}, \\
\tau_{i}^{2}&=&1,
\end{array}
\end{equation} 
\begin{equation}\label{mixed-rel}
\begin{array}{rcl}
\si_{i}\tau_{j}&=&\tau_{j}\si_{i} \hspace{1.5cm} \text{ if $|i-j|>1$,} \\
\tau_i \si_{i+1}\tau_i & = & \tau_{i+1} \si_i \tau_{i+1}.
\end{array}
\end{equation}

In Section \ref{section-5}, we will use Kamada's generalization of the Markov theorem to give a natural normalization for the virtual Alexander polynomial $H_K(s,t,q)$. This will be achieved by interpreting the virtual Alexander invariant in terms of the {\it virtual Burau representation}, introduced below in terms of a more general construction of the Magnus nonabelian 1-cocycle.

\subsection*{The fundamental representation of $\VB_k$}
Let $F_{k+2} = \langle x_1, \ldots, x_k, s, q \rangle$ and let \linebreak
$\Aut(F_{k+2} \, \rel \, \{s,q\})$ be the subgroup of $\Aut(F_{k+2})$ consisting of those automorphisms of $F_{k+2}$ that fix $s$ and $q$.
We view $\th \in  \Aut(F_{k+2})$ as acting on the right on $F_{k+2}$ so for  $\th_1, \th_2 \in  \Aut(F_{k+2})$ the product $\th_1 \th_2$ is left to right
composition, that is,   $(w)(\th_1 \th_2) = \left( (w) \th_1\right) \th_2$.

For $i=1,\ldots, k-1$, define automorphisms $\Phi(\si_i), \Phi(\si_i^{-1})$ and $\Phi(\tau_i)$ of $F_{k+2}$  fixing $s$ and $q$ as follows.
For $j=1,\ldots, k$, 
\begin{equation} \label{eq-fundrep}
\begin{split}
 (x_j) \Phi(\si_i) =&  \begin{cases} 
      s x_{i+1} s^{-1} &  \text{if $j=i$,} \\
      x_{i+1}x_{i} (s x_{i+1}^{-1}s^{-1}) & \text{if $j=i+1$,}\\
      x_{j} & \text{otherwise},
   \end{cases} \\
  (x_j) \Phi(\si_i^{-1}) =&  \begin{cases} 
      (s^{-1} x_{i}^{-1} s) x_{i+1} x_{i} &  \text{if $j=i$,} \\
      s^{-1} x_{i} s & \text{if $j=i+1$,}\\
      x_{j} & \text{otherwise},
   \end{cases} \\
 (x_j) \Phi(\tau_i) =&  \begin{cases} 
      q x_{i+1} q^{-1} &  \text{if $j=i$,} \\
      q^{-1}x_{i}q &  \text{if $j=i+1$,} \ \\
      x_{j} & \text{otherwise}.
   \end{cases}
\end{split}
\end{equation}
It is straightforward to show that this assignment of generators of $\VB_k$ to automorphisms of $F_{k+2}$
respects the relations \eqref{classical-rel}, \eqref{virtual-rel}, and \eqref{mixed-rel}. 
Hence $\Phi$ extends to a representation 
$\Phi \colon \VB_{k} \to \Aut(F_{k+2} \, \rel \, \{s,q\})$
that we call the {\it fundamental representation of the virtual braid group}.
We will often suppress $\Phi$ and simply write $x_i^\be$ instead of $(x_i)\Phi(\be)$.

Notice that under the fundamental representation, a virtual braid $\be \in \VB_k$ acts on $x_1, \ldots, x_k$ as depicted in Figure
\ref{Braid-Action}

\begin{figure}[ht]
\centering
 \includegraphics[scale=1.20]{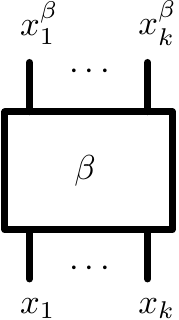}
\caption{The braid automorphism $\be$.}
\label{Braid-Action}
\end{figure}
Notice further that if $K= \wh{\be}$ is the closure of $\be \in \VB_k$, then the virtual knot group $\VG_K$ admits the presentation 
\begin{equation} \label{present2}
 \langle x_1, \ldots, x_k, s,q \mid  x_1=x_1^\be, \ldots, x_k=x_k^\be, [s,q]=1 \rangle.
\end{equation} 

For two virtual braids $\al, \be \in \VB_k$, the product $\al \be$ acts on $x_1, \ldots, x_k$  as depicted in Figure
\ref{Braid-Composition}. 
 
\begin{figure}[hb]
\centering
\includegraphics[scale=1.20]{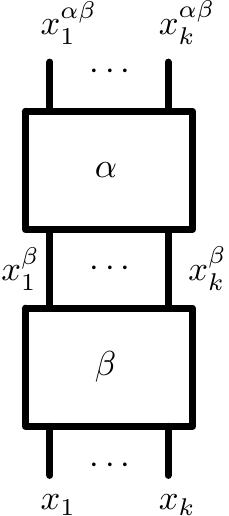}
\caption{The composition $\al \be$ of two braid automorphisms.}
\label{Braid-Composition}
\end{figure}

\subsection*{The Magnus non-abelian 1-cocycle}
Given a matrix $A $ over $\ZZ F_{k+2}$ and $\theta \in \Aut(F_{k+2})$, let $A^\theta$ be  the matrix obtained by applying $\theta$ to each entry of $A$
(note that an automorphism of $F_{k+2}$ extends to an automorphism of $\ZZ F_{k+2}$ denoted by the same symbol).

For $\theta_1, \theta_2 \in \Aut(F_{k+2} \, \rel \, \{s,q\})$,
the Chain Rule for Fox derivatives \cite[(2.6)]{Fox1953} yields
\begin{equation}
\label{chainrule}
\frac{\partial ~}{\partial x_{j}}  \, (x_{i}^{\theta_1})^{\theta_2} =
 \sum_{\ell=1}^k  \left( \frac{\partial (x_{i}^{\theta_1})}{\partial x_{\ell}} \right)^{\theta_2} \,\, \frac{\partial (x_{\ell}^{\theta_2})}{\partial x_{j}} 
\end{equation} 
(note that   $\frac{\partial (s^{\theta_2})}{\partial x_{j}} = \frac{\partial s }{\partial x_{j}}   = 0$ and $\frac{\partial (q^{\theta_2})}{\partial x_{j}} = \frac{\partial q}{\partial x_{j}}   = 0$).

For $\theta \in  \Aut(F_{k+2} \, \rel \, \{s,q\})$ define the {\it Fox Jacobian} of $\theta$ with respect to the variables $x_1, \ldots, x_k$ to be the $k \times k$ matrix over $\ZZ F_{k+2}$ given by
\[
D(\theta)  = \left( \frac{\partial (x_{i}^{\theta})}{\partial x_{j}} \right).
\]
It follows from \eqref{chainrule} that
\begin{equation}
\label{cocycleproperty}
D(\theta_1 \theta_2) = {D(\theta_1)}^{\theta_2} \,\, D(\theta_2).
\end{equation} 
(Recall that  $\theta_1 \theta_2$ denotes left to right composition.)
Applying  \eqref{cocycleproperty}  in the case \linebreak
$\theta_1 = \theta_2^{-1}$ shows
that $D(\theta)$ is invertible for any $\theta \in  \Aut(F_{k+2} \, \rel \, \{s,q\})$.
Hence the map $D \colon  \Aut(F_{k+2} \, \rel \, \{s,q\}) \to GL_k(\ZZ F_{k+2})$ is a (right) non-abelian $1$-cocycle
that we call the {\it Magnus non-abelian $1$-cocycle}.

The following properties of the Magnus non-abelian $1$-cocycle are straightforward consequences of \eqref{cocycleproperty}.

\begin{proposition}
\label{cocycle-properties}
\begin{enumerate}[\hspace{12pt}]
\item[(i)]    ${D(\be^{-1})}^\be  = {D(\be)}^{-1}$ for any $\be \in  \Aut(F_{k+2} \, \rel \, \{s,q\}).$
\item[(ii)]   ${D(\ga \be \ga^{-1})}^\ga = {D(\ga)}^\be \,\, D(\be) \, {D(\ga)}^{-1}$ for all $\be,\ga \in  \Aut(F_{k+2} \, \rel \, \{s,q\}).$
\end{enumerate}
\end{proposition}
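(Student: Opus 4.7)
The plan is to derive both identities directly from the defining cocycle relation $D(\theta_1\theta_2) = D(\theta_1)^{\theta_2} \, D(\theta_2)$ of \eqref{cocycleproperty}, together with the trivial computation $D(\id) = I$ (since $\partial x_i / \partial x_j = \delta_{ij}$). The key observation making everything go is that the exponent action is itself associative: applying $\theta_1$ to the entries and then $\theta_2$ is the same as applying $\theta_1 \theta_2$, i.e.\ $(A^{\theta_1})^{\theta_2} = A^{\theta_1 \theta_2}$, which holds because $\theta_1 \theta_2$ is defined as left-to-right composition in $\Aut(F_{k+2})$.

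For part (i), I would apply \eqref{cocycleproperty} with $\theta_1 = \be^{-1}$ and $\theta_2 = \be$. This gives
\[
I = D(\id) = D(\be^{-1}\be) = D(\be^{-1})^{\be} \, D(\be),
\]
and rearranging yields $D(\be^{-1})^\be = D(\be)^{-1}$. (Invertibility of $D(\be)$ was already noted after \eqref{cocycleproperty}, so the manipulation is legal.)

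For part (ii), I would iterate \eqref{cocycleproperty} twice to expand $D(\ga \be \ga^{-1})$:
\[
D(\ga \be \ga^{-1}) = D(\ga)^{\be \ga^{-1}} \, D(\be \ga^{-1}) = D(\ga)^{\be \ga^{-1}} \, D(\be)^{\ga^{-1}} \, D(\ga^{-1}).
\]
Then I would apply $\ga$ entrywise to both sides. Using $(A^{\theta})^\ga = A^{\theta \ga}$ on each factor, the right-hand side becomes
\[
D(\ga)^{\be \ga^{-1}\ga} \, D(\be)^{\ga^{-1}\ga} \, D(\ga^{-1})^\ga = D(\ga)^{\be} \, D(\be) \, D(\ga^{-1})^\ga,
\]
and substituting the identity from part (i), $D(\ga^{-1})^\ga = D(\ga)^{-1}$, gives exactly $D(\ga \be \ga^{-1})^\ga = D(\ga)^\be \, D(\be) \, D(\ga)^{-1}$.

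There is no real obstacle here; the proof is purely formal bookkeeping with the cocycle identity and the associativity of the composition action on matrix entries. The only point requiring minor care is to keep the order of composition consistent with the right-action convention already fixed in the paper, so that the shift of exponents $\be \ga^{-1} \ga = \be$ is written in the correct order rather than (incorrectly) as $\ga^{-1} \ga \be$; once that convention is applied uniformly, both identities fall out in one line each.
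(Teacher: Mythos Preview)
Your proof is correct and is exactly the routine verification the paper has in mind; the paper simply states that both identities are ``straightforward consequences of \eqref{cocycleproperty}'' without spelling them out. The only implicit step you use that might be worth making explicit is that the entrywise action $A \mapsto A^\theta$ is a ring homomorphism on matrices (so $(AB)^\theta = A^\theta B^\theta$), which is what lets you push $(-)^\ga$ through the product in part (ii); this is immediate since an automorphism of $F_{k+2}$ extends to a ring automorphism of $\ZZ F_{k+2}$.
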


\subsection*{The virtual Burau representation}
Let $\cL = \ZZ[s^{\pm 1}, t^{\pm 1}, q^{\pm1}]$ and
let  \hbox{$\al \colon \ZZ F_{k+2} \to \cL$} be the ring homomorphism determined by $\al(x_i) =t$, $\al(s)=s$ and $\al(q)=q$.
Let $\al_* \colon GL_k(\ZZ F_{k+2} ) \to GL_k( \cL)$ be the change of rings homomorphism, that is,
$\al_*\left( (a_{ij})  \right) = \left( \al(a_{ij}) \right)$ for $ (a_{ij})  \in GL_k(\ZZ F_{k+2} )$.

\begin{definition}
\label{def_virtual_Burau_rep}
The {\it virtual Burau representation} is the map $\Psi \colon \VB_k \to GL_k(\cL)$ given by
$
\Psi(\be) =   \al_*(D(\Phi(\be)))  
$
for $\be \in \VB_k$.
\end{definition}

The map $\Psi \colon \VB_k \to GL_k(\cL)$ is a homomorphism:
For $\be, \ga \in \VB_k$,
\begin{eqnarray*}
\Psi(\be \ga)  &=&  \al_*(D(\Phi(\be \ga))) =   \al_*\left(D(  \Phi(\be) \Phi(\ga) ) \right)  \\
                                &=&  \al_*\left( D(  \Phi(\be) )^{\Phi(\ga)}  \,     D(  \Phi(\ga) )         \right)   \qquad \qquad \text{ by  \eqref{cocycleproperty}}\\
                                &=&  \al_*\left( D(  \Phi(\be) )^{\Phi(\ga)} \right) \, \al_*\left(D(  \Phi(\ga) ) \right)  
\end{eqnarray*}
For any  $w \in F_{k+2}$, $\al(w^\ga) =  \al(w)$ and thus
$\al_*\left( D(  \Phi(\be) )^{\Phi(\ga)}) \right)  = \al_*\left( D(  \Phi(\be) ) \right)$.
It follows that $\Psi(\be \ga) = \Psi(\be) \, \Psi(\ga)$.

By direct calculation, the images of the generators $\si_i, \si_i^{-1},$ and $\tau_i$ under the virtual Burau representation $\Psi$ are given by the matrices
\begin{equation} \label{Burau-matrix}
\begin{split}
\Psi(\si_{i}) &=
\begin{bmatrix}
I_{i-1} &  & \cdots& {\bf 0}\\
  & 0 & s & \vdots  \\
 \vdots& t  & 1-st &    \\
{\bf 0} & \cdots &  & I_{k-i-1} \\
\end{bmatrix}, \\[1em] 
\Psi(\si_{i}^{-1}) &=
\begin{bmatrix}
I_{i-1} & & \cdots  & {\bf 0} \\
 &  1-s^{-1}t^{-1} & t^{-1} & \vdots\\
\vdots & s^{-1} & 0  &  \\
{\bf 0} & \cdots &  & I_{k-i-1} \\
\end{bmatrix},  \\[1em]
\Psi(\tau_{i}) &=
\begin{bmatrix}
I_{i-1} &  &  \cdots & {\bf 0}\\
 & 0 & q & \vdots\\
\vdots & q^{-1} & 0  & \\
{\bf 0} & \cdots &  & I_{k-i-1} \\
\end{bmatrix}
\end{split}
\end{equation}
where  $I_m$ is the $m\times m$ identity matrix. 

\subsection*{The twisted virtual Burau representation}
Let $R$ be a commutative ring and let
 $\cL_R = R[s^{\pm 1}, t^{\pm 1}, q^{\pm 1}]$ be  the ring of Laurent polynomials over $R$.
Let  $\al \colon \ZZ F_{k+2} \to \cL_R$ be the ring homomorphism determined by $\al(x_i) =t$, $\al(s)=s$ and $\al(q)=q$.
Given any representation $\varrho  \colon F_{k+2}  \to GL_n(R)$,
let ${\wt \varrho} = \varrho \otimes \al \colon \ZZ F_{k+2}  \to GL_n(\cL_R)$. 
For any $k \times k$ matrix  $A=(a_{ij})$ over $\ZZ F_{k+2}$,
let ${\wt \varrho}_*(A)$ be the block matrix  obtained by replacing $a_{ij}$ by ${\wt \varrho}(a_{ij})$. 
We view ${\wt \varrho}_*(A)$ as a $nk \times nk$ matrix over $\cL_R$.  
This construction yields a homomorphism ${\wt \varrho}_* \colon GL_k(\ZZ F_{k+2}) \to GL_{nk}(\cL_R)$.

\begin{definition}
\label{def_twisted_virtual_Burau_rep}
The {\it $\varrho$-twisted virtual Burau representation} is the map
\[
\Psi_{\varrho} \colon \VB_k \lto GL_{nk}(\cL_R)
\]
given by
$
\Psi_{\varrho}(\be) =    {\wt \varrho}_{*}\left(  D( \Phi(\be) )   \right)  
$
for $\be \in \VB_k$.
\end{definition}

In general, $\Psi_{\varrho}$ is not a homomorphism of groups but rather has property \eqref{psipropertyone} below.
In the special case $\varrho$ is the trivial representation,
we recover the (untwisted) virtual Burau representation $\Psi$.

If $\theta$ is an automorphism  of $F_{k+2}$ then the composite
$F_{k+2} \overset{\theta}\to  F_{k+2} \overset{\varrho}\to GL_n(R)$  is also a representation
that we denote by $\theta_*\varrho$.
In the case of an automorphism of the form $\Phi(\be)$, where $\be\in \VB_k$, we abbreviate $\Phi(\be)_*\varrho$ by $\be_* \varrho$.
Note that ${\wt {\be_*\varrho}} = \be_*{\wt \varrho}$.
Since we view automorphisms of $F_{k+2}$ as acting on the right, 
for $\be, \, \ga \in \VB_k$ our convention gives  $(\be \ga)_*\varrho = \be_*(\ga_* \varrho)$.

\begin{comment}
$X \overset{f}{\rightarrow} Y$
If $\theta$ is an automorphism  of $F_{k+2}$ then the composite $\varrho \circ \theta \colon F_{k+2} \to GL_n(R)$ is also a representation.
In the case of an automorphism of the form $\Phi(\ga)$, where $\ga\in \VB_k$, we abbreviate $\varrho \circ \Phi(\ga)$ by $\varrho \ga$.
Note that ${\wt {\varrho \ga}} = {\wt \varrho} \ga$.
Since we view automorphisms of $F_{k+2}$ as acting on the right, 
for $\ga, \, \delta \in \VB_k$ our convention gives  $\varrho(\ga \delta) = (\varrho \delta)\ga$.
\end{comment}

It follows from \eqref{cocycleproperty} that the map $\Psi_{\varrho}$ has the property
\begin{equation}
\label{psipropertyone}
\Psi_{\varrho}(\be \ga)  = \Psi_{\ga_*\varrho}(\be )  \, \Psi_{\varrho}(\ga). 
\end{equation}

Iterating \eqref{psipropertyone} yields
\begin{equation}
\label{psipropertyonemultiple}
\Psi_{\varrho} \left(a_1 a_2 \cdots a_m \right) =
\Psi_{(a_2 \cdots a_m)_*{\varrho} } \left(a_1\right) \,
\Psi_{(a_3\cdots a_m)_*{\varrho} } \left(a_2\right)  \,
\cdots
\Psi_{\varrho} \left(a_m\right). 
\end{equation}

As a consequence of Proposition \ref{cocycle-properties}(ii),  for $\be, \, \ga \in \VB_k$ we have
\begin{equation}
\label{psipropertytwo}
\Psi_{\ga_*\varrho}(\ga \be \ga^{-1})  =  \Psi_{\be_*\varrho}(\ga)  \, \Psi_{\varrho}(\be) \, \Psi_{\varrho}(\ga)^{-1}.
\end{equation}

\subsection*{The virtual Alexander invariants via braids.}
We now explain the relationship between the virtual Burau representation of $\be \in \VB_k$ and the Alexander polynomial $H_K(s,t,q)$ of the closure $K =\wh{\be}$. We will use this relationship to derive some interesting properties of $H_K(s,t,q)$.

\begin{theorem} \label{Burau}
Suppose $\be \in \VB_{k}$ is a virtual braid and $K= \wh{\be}$ is the virtual knot or link obtained from its closure. Then the $\ell$-th virtual Alexander polynomial $\Delta^\ell_{K}$ 
is given by the $\gcd$ of the $(k-\ell)\times (k-\ell)$ minors of the matrix $\Psi(\be)-I_k.$
In particular, the virtual Alexander polynomial is determined by the virtual Burau representation via the formula
$$H_K(s,t,q) = \det\left(\Psi(\be)-I_k\right).$$
\end{theorem}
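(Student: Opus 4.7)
The plan is to reduce the statement to the general result derived at the end of Section~\ref{section-3}, which says that for any Wirtinger-style presentation of $\VG_K$ of the form \eqref{present}, the polynomial $\Delta^\ell_K$ equals (up to units in $\cL$) the gcd of the $(n-\ell)\times(n-\ell)$ minors of the Fox Jacobian $A=\bigl(\partial r_i/\partial x_j\bigr)\big|_{x_\ell \mapsto t}$ of the non-commutator relations. For a braid closure $K=\wh\be$, presentation \eqref{present2} furnishes exactly such a data set with $n=k$ and $r_i = x_i^{-1}x_i^\be$, so the task reduces to identifying the corresponding $A$ with $\Psi(\be)-I_k$ up to a unit.

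First I would compute the Fox derivative of each relator. Using $\partial(uv)/\partial x_j = \partial u/\partial x_j + u\,\partial v/\partial x_j$ and $\partial x_i^{-1}/\partial x_j = -x_i^{-1}\delta_{ij}$, one gets
\begin{equation*}
\frac{\partial r_i}{\partial x_j} \;=\; -x_i^{-1}\delta_{ij} \;+\; x_i^{-1}\,\frac{\partial x_i^\be}{\partial x_j}.
\end{equation*}
Now apply the abelianization $\al\colon \ZZ F_{k+2}\to\cL$ with $\al(x_\ell)=t$, $\al(s)=s$, $\al(q)=q$. By the very definition of the virtual Burau representation, $\al\bigl(\partial x_i^\be/\partial x_j\bigr)$ is the $(i,j)$-entry of $\Psi(\be)=\al_*(D(\Phi(\be)))$. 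Therefore the virtual Alexander matrix for this presentation is
\begin{equation*}
A \;=\; t^{-1}\bigl(\Psi(\be)-I_k\bigr).
\end{equation*}

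Since $t^{-1}$ is a unit in $\cL$, every $(k-\ell)\times(k-\ell)$ minor of $A$ equals $t^{-(k-\ell)}$ times the corresponding minor of $\Psi(\be)-I_k$. Hence the gcd of these minors agrees with the gcd of the $(k-\ell)\times(k-\ell)$ minors of $\Psi(\be)-I_k$ up to units in $\cL$, which establishes the claim for every $\ell$. In particular, taking $\ell=0$ yields $H_K(s,t,q)=\det A = \det\bigl(\Psi(\be)-I_k\bigr)$ up to units, which is the second assertion.

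The only potentially delicate point is purely bookkeeping: one must use the presentation \eqref{present2} coming from the braid closure rather than a general Wirtinger presentation from a diagram, and then invoke the result from Section~\ref{section-3} saying that the higher elementary ideals $\cE_\ell$ are principal and generated by the minors of the Fox Jacobian of the non-commutator relations (a fact proved there using the fundamental Fox identities \eqref{fundamental-Fox}). No further argument about the commutator relation $[s,q]$ is needed, since that analysis has already been carried out in the derivation culminating in $H_K(s,t,q)=\det A$ in Section~\ref{section-3}. Consequently the proof is essentially a direct translation of the Fox-calculus picture into the language of the Magnus $1$-cocycle $D$ and its abelianization $\Psi$.
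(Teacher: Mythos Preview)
Your argument is correct and follows essentially the same approach as the paper, which simply says the result is obtained by taking Fox derivatives of the relations in the presentation \eqref{present2}; you have supplied the explicit computation $A = t^{-1}(\Psi(\be)-I_k)$ that the paper leaves implicit. One small wording slip: the elementary ideals $\cE_\ell$ are not in general principal---it is the smallest principal ideal $\cI_\ell$ containing $\cE_\ell$ that is generated by the gcd of the minors---but your actual argument uses this correctly.
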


\begin{proof} Label the braid strands $x_{1} \ldots, x_{k}$ and observe that labelling of braid strands according to the automorphisms given by $\si_{i}, \si_{i}^{-1}$, and $\tau_{i}$ is consistent with labelling according to the relations of $\VG_{K}$. As we use right automorphisms, the braid $\be_{1}\be_{2}$ corresponds to the left-to-right composition of automorphisms. The result is obtained by taking Fox derivatives of the relations in the presentation for $\VG_K$ in \eqref{present2}.
\end{proof}

The next result, first proved by Bartholomew and Fenn \cite[Theorem 7.1]{BF} (the case of the ``Alexander switch''),
will be used to show that the generalized Alexander polynomial determines the virtual Alexander polynomial.

\begin{theorem}
\label{theorem-H-K}
If $K$ is a virtual knot or  link,  then  the virtual Alexander polynomial satisfies 
$$ 
H_K(s,t,q) = H_K(sq^{-1}, tq, 1).
$$

\end{theorem}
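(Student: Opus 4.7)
The plan is to exploit the braid formula $H_K(s,t,q) = \det(\Psi(\be) - I_k)$ from Theorem~\ref{Burau}. By Kamada's Alexander theorem for virtual braids, write $K = \wh{\be}$ for some $\be \in \VB_k$. Let $\phi \colon \cL \to \cL$ be the ring homomorphism $s \mapsto sq^{-1}$, $t \mapsto tq$, $q \mapsto 1$, extended entry-wise to matrices. Then $H_K(sq^{-1}, tq, 1) = \phi(H_K(s,t,q)) = \det(\phi(\Psi(\be)) - I_k)$, so the claim reduces to showing $\det(\Psi(\be) - I_k) = \det(\phi(\Psi(\be)) - I_k)$.

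The crux is a single conjugation identity. Set $D = \diag(1, q, q^2, \ldots, q^{k-1}) \in GL_k(\cL)$. I claim that $D \, \Psi(\ga) \, D^{-1} = \phi(\Psi(\ga))$ for each generator $\ga \in \{\si_i^{\pm 1}, \tau_i\}$ of $\VB_k$. This reduces to a $2 \times 2$ calculation on the $(i,i+1)$-block, where $D$ restricts to $\diag(q^{i-1}, q^i)$. Using the matrices \eqref{Burau-matrix}, one checks directly that
\[
\begin{pmatrix} q^{i-1} & 0 \\ 0 & q^i \end{pmatrix} \begin{pmatrix} 0 & s \\ t & 1-st \end{pmatrix} \begin{pmatrix} q^{1-i} & 0 \\ 0 & q^{-i} \end{pmatrix} = \begin{pmatrix} 0 & sq^{-1} \\ tq & 1-st \end{pmatrix},
\]
which is exactly $\phi$ applied to the $\si_i$ block, since $\phi$ fixes $st$; the analogous identity for $\si_i^{-1}$ holds for the same reason; and for $\tau_i$ the same conjugation sends $\begin{pmatrix} 0 & q \\ q^{-1} & 0 \end{pmatrix}$ to the permutation matrix $\begin{pmatrix} 0 & 1 \\ 1 & 0 \end{pmatrix}$, which is $\phi$ applied to the $(i,i+1)$-block of $\Psi(\tau_i)$.

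Because $\Psi$ is a group homomorphism, the generator-wise identity propagates through arbitrary words: $D \, \Psi(\be) \, D^{-1} = \phi(\Psi(\be))$ for every $\be \in \VB_k$. Since $D$ commutes with $I_k$, this yields
\[
\det(\phi(\Psi(\be)) - I_k) = \det\bigl(D\,(\Psi(\be) - I_k)\,D^{-1}\bigr) = \det(\Psi(\be) - I_k),
\]
which gives $H_K(sq^{-1}, tq, 1) = H_K(s,t,q)$ as an equality of Laurent polynomials (hence certainly up to units in $\cL$).

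The only substantive step is the $2\times 2$ verification of the generator-wise conjugation identity; everything else is formal manipulation of the Burau homomorphism. I anticipate no real obstacle beyond keeping the indexing of $D$ straight, so that the conjugation produces the correct $q$-exponents simultaneously for classical crossings (where $q$ is introduced out of thin air) and for virtual crossings (where the existing $q^{\pm 1}$ entries are cancelled).
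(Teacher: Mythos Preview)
Your proof is correct and is essentially the same as the paper's: both conjugate the Burau matrices by the diagonal matrix $\diag(1,q,\ldots,q^{k-1})$, verify the effect on generators, and extend multiplicatively. The only cosmetic difference is that you package the substitution as a ring endomorphism $\phi$ of $\cL$, whereas the paper phrases it as evaluation of matrix-valued functions on $(\CC^*)^3$; your formulation is arguably cleaner.
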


\begin{remark} \label{rem-identity}
Using the above identity and  writing $H_K(s,t,q) = \sum_{ijk} a_{ijk} s^i t^j q^k$, it follows that $a_{ijk}=0$ unless $i+k=j$. Thus, it follows that the virtual Alexander polynomial also satisfies
$$ 
H_K(s,t,q) = H_K(1, st, qs^{-1}) = H_K(st, 1, qt).
$$
\end{remark}
\begin{proof}
Let $\be \in \VB_k$ be a virtual braid such that $K = \wh \be$.  
Write $\be$ as a product of generators $\be = \th_1 \cdots \th_m$, where $\th_j \in \{ \si_i^{\pm 1},  \tau_i  \mid   1\leq i \leq k-1\}$.
By Theorem \ref{Burau},  $H_K(s,t,q) = \det\left(\Psi(\be)-I_k\right)= \det\left(\Psi(\th_1) \cdots \Psi(\th_m)-I_k\right)$.
Let $\CC^*$ denote the multiplicative group of non-zero complex numbers.
A $k \times k$ matrix $A$ over $\ZZ[s^{\pm 1}, t^{\pm 1}, q^{\pm 1}]$ can be regarded as
function from $(\CC^*)^3$ to $k \times k$ matrices over $\CC$ via evaluation at the variables $s, t, q$
and, when viewed as such, we write  $A(s,t,q)$. 
Consider the diagonal  $k \times k$ matrix
$$\La= \begin{bmatrix} 1 &&& 0 \\ & q  \\ && \ddots \\ 0 &&& q^{k-1} \end{bmatrix}.$$
A straightforward calculation yields:
\begin{eqnarray*}
\left(\La\Psi(\si_i^{\pm 1})\La^{-1} \right)(s,t,q) &=& \Psi(\si_i^{\pm 1})(sq^{-1}, tq, 1)  \quad \text{ and } \\
\left(\La\Psi(\tau_i)\La^{-1} \right)(s,t,q) &=&  \Psi(\tau_i)(1, 1, 1) =  \Psi(\tau_i)(sq^{-1}, tq, 1).
\end{eqnarray*}
For the last equality, note that $\Psi(\tau_i)(s, t, q)$ is independent of $s$ and $t$.
Hence
\begin{eqnarray*}
H_K(s,t,q) &=& \det\left(\Psi(\th_1) \cdots \Psi(\th_m)-I_n\right) = \det\left(\La\left(\Psi(\th_1) \cdots \Psi(\th_m)-I_k\right) \La^{-1} \right)  \\
                 &=& \det\left(\La\Psi(\th_1)\La^{-1} \cdots \La\Psi(\th_m)\La^{-1} - I_k  \right)  = H_K(sq^{-1}, tq, 1).
\end{eqnarray*}
 \end{proof}

\begin{theorem}
\label{theorem-H-K-higher}
If $K$ is a virtual knot or  link, then  the higher virtual Alexander invariants satisfy $\Delta^{\ell}_K(s,t,q) = \Delta^{\ell}_K(sq^{-1}, tq, 1)$
for all $0\leq \ell < k$.
\end{theorem}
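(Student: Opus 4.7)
The plan is to upgrade the proof of Theorem \ref{theorem-H-K} from determinants to arbitrary minors. The key observation is that conjugation by a diagonal matrix multiplies each minor by a unit of $\cL$, so it preserves the $\gcd$ of the $(k-\ell)\times(k-\ell)$ minors up to units. All the analytic content needed has already been established in the proof of Theorem \ref{theorem-H-K}; what remains is a bookkeeping argument with minors in place of the full determinant.

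More precisely, write $\be = \th_1 \cdots \th_m$ as a product of generators $\si_i^{\pm 1}, \tau_i$ and set $M(s,t,q) = \Psi(\be)(s,t,q) - I_k$. By Theorem \ref{Burau}, $\Delta^\ell_K(s,t,q)$ is the $\gcd$ of the $(k-\ell)\times(k-\ell)$ minors of $M(s,t,q)$. The diagonal matrix $\La = \diag(1, q, q^2, \ldots, q^{k-1})$ from the proof of Theorem \ref{theorem-H-K} satisfies
$$
\La\, M(s,t,q)\, \La^{-1} = M(sq^{-1}, tq, 1),
$$
since the same identity holds for $\Psi$ applied to each generator $\si_i^{\pm 1}$ and $\tau_i$, and conjugation distributes through products and through the subtraction of $I_k$.

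Now, for any index sets $I, J \subset \{1,\ldots, k\}$ of size $r=k-\ell$, the $(I,J)$-minor of $\La M \La^{-1}$ equals
$$
\Bigl(\prod_{i\in I} q^{i-1}\Bigr)\Bigl(\prod_{j \in J} q^{1-j}\Bigr) \cdot m_{IJ}(s,t,q),
$$
where $m_{IJ}(s,t,q)$ denotes the $(I,J)$-minor of $M(s,t,q)$. The prefactor is a unit in $\cL$, so
$$
m_{IJ}(sq^{-1}, tq, 1) \doteq m_{IJ}(s,t,q),
$$
where $\doteq$ means equality up to multiplication by $\pm s^i t^j q^k$. Taking the $\gcd$ over all $r\times r$ index sets, the $\gcd$ of the $r\times r$ minors of $M(s,t,q)$ agrees, up to units in $\cL$, with the $\gcd$ of the $r\times r$ minors of $M(sq^{-1}, tq, 1)$. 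By Theorem \ref{Burau}, these are $\Delta^\ell_K(s,t,q)$ and $\Delta^\ell_K(sq^{-1}, tq, 1)$ respectively, giving the desired identity.

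The main potential obstacle is purely notational: one has to track that the conjugation acts entry-by-entry as multiplication by units of $\cL$, so that every minor — not just the full determinant — is affected only by a unit, and that the indeterminacy in the $\gcd$ is absorbed by the $\doteq$ convention. Once this is made precise, the argument is a direct extension of the $\ell=0$ case.
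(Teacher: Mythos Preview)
Your approach is sound and in fact more elementary than the paper's up to the penultimate step: instead of citing Northcott to conclude that the elementary ideals of $M$ and $\La M \La^{-1}$ coincide, you compute directly that each $(I,J)$-minor is multiplied by the unit $\prod_{i\in I}q^{i-1}\prod_{j\in J}q^{1-j}$, which is a cleaner way to see the same thing.

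However, there is a genuine gap in the final sentence. You correctly establish that
\[
\gcd_{I,J}\bigl(m_{IJ}(s,t,q)\bigr) \doteq \gcd_{I,J}\bigl(m_{IJ}(sq^{-1},tq,1)\bigr),
\]
and the left side is $\Delta^\ell_K(s,t,q)$ by Theorem~\ref{Burau}. But your claim that the right side equals $\Delta^\ell_K(sq^{-1},tq,1)$ is \emph{not} a consequence of Theorem~\ref{Burau}: that theorem identifies $\Delta^\ell_K$ with a $\gcd$ of minors, but it does not say that substituting into the $\gcd$ gives the $\gcd$ of the substituted minors. The substitution $\phi\colon (s,t,q)\mapsto(sq^{-1},tq,1)$ is not an automorphism of $\cL$ (it kills $q-1$), so in general $\phi(\gcd f_i)$ need only \emph{divide} $\gcd(\phi(f_i))$, not equal it.

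What rescues the argument is exactly the extra step the paper supplies: each $m_{IJ}(s,t,q)$ is, up to a unit, a polynomial in the two variables $sq^{-1}$ and $tq$ alone, and the subring $\ZZ[(sq^{-1})^{\pm 1},(tq)^{\pm 1}]$ is factorially closed in $\cL$ (an element of this subring that factors in $\cL$ already factors there, since $\cL$ is a Laurent polynomial ring in $q$ over it). Hence every irreducible factor of every $m_{IJ}$, and therefore their $\gcd$, also lies in this subring up to units, which forces $\Delta^\ell_K(sq^{-1},tq,1)\doteq\Delta^\ell_K(s,t,q)$. Add this observation and your proof is complete.
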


By the same reasoning as in Remark \ref{rem-identity}, it follows that the higher virtual Alexander invariants also satisfy $\Delta^{\ell}_K(s,t,q) = \Delta^{\ell}_K(1, ts, qs^{-1}) = \Delta^{\ell}_K(st, 1, qt)$ for all $0\leq \ell < k$.

\begin{proof}
We make use of the proof of Theorem \ref{theorem-H-K} and the notation used there.
Let $A = \Psi(\be)-I_k$.
Recall that
 $\left( \La\, A\, \La^{-1} \right)(s,t,q) = A(sq^{-1}, tq, 1)$.
For an $k \times k$ matrix $B$ and an integer $0\leq \ell < k$,
let ${\cE}_{\ell}(B)$ denote the ideal generated by the $(k-\ell)\times (k-\ell)$ minors of $B$, and let $\cI_\ell(B)$ denote the smallest principal ideal containing ${\cE}_{\ell}(B)$.

By  \cite[{Theorem 2, \S 1.4} ]{Northcott},    
${\cE}_{\ell}(A) = {\cE}_{\ell}\left( \La\, A\, \La^{-1} \right)$  and
hence ${\cE}_{\ell}\left(A\right) = {\cE}_{\ell}\left(A(sq^{-1}, tq, 1)\right)$ 
(the right side of the last equality having the obvious meaning).
It follows that we can choose generators $f_1, \ldots, f_m$ of $\cE_\ell(A)$ satisfying
$f_i(s,t,q) = f_i(sq^{-1},tq,1)$ for $1 \leq i \leq m.$

The higher Alexander invariant $\Delta^{\ell}_K$ is given as the greatest common divisor of the set of generators of $\cE_\ell$, and it is well-defined up to units in the ring of Laurent polynomials. Thus, we have $\Delta^{\ell}_K = \gcd(f_1, \ldots, f_m).$
 
Since any irreducible factor in $\ZZ[s^{\pm 1}, t^{\pm 1}, q^{\pm 1}]$ of a polynomial of the form 
$f(sq^{-1},tq)$ also has this form, it follows that $\Delta^{\ell}_K$ satisfies the condition
$\Delta^{\ell}_K(s,t,q) = \Delta^{\ell}_K(sq^{-1},tq,1)$.
\end{proof}

The following corollary is a direct consequence of Proposition \ref{HdeterminesG} and Theorem \ref{theorem-H-K}.
\begin{corollary} \label{GdeterminesH}
For any virtual knot or  link $K$, the virtual Alexander polynomial is determined by the generalized Alexander polynomial via the formula 
$$H_K(s,t,q) = G_K(sq^{-1}, tq).$$
\end{corollary}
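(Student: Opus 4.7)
The plan is to chain the two previously established identities. By Theorem~\ref{theorem-H-K}, the virtual Alexander polynomial satisfies
\[
H_K(s,t,q) \;=\; H_K(sq^{-1},\,tq,\,1),
\]
so the three-variable polynomial is completely determined by its specialization at $q=1$ after a reparametrization of the remaining two variables. On the other hand, Proposition~\ref{HdeterminesG} identifies that specialization with the generalized Alexander polynomial:
\[
H_K(u,v,1) \;=\; G_K(u,v),
\]
up to units in $\ZZ[u^{\pm 1},v^{\pm 1}]$.

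First, I would set $u = sq^{-1}$ and $v = tq$ in the identity from Proposition~\ref{HdeterminesG}, obtaining
\[
H_K(sq^{-1},\,tq,\,1) \;=\; G_K(sq^{-1},\,tq).
\]
Then combining this with Theorem~\ref{theorem-H-K} immediately yields
\[
H_K(s,t,q) \;=\; H_K(sq^{-1},tq,1) \;=\; G_K(sq^{-1},tq),
\]
which is the claimed formula.

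The only subtlety worth noting is bookkeeping of the indeterminacies: $H_K(s,t,q)$ is well-defined only up to units $\pm s^i t^j q^k$ in $\cL$, while $G_K(s,t)$ is well-defined only up to units $\pm s^i t^j$ in $\ZZ[s^{\pm 1},t^{\pm 1}]$. A unit of the form $\pm s^i t^j$ pulls back under the substitution $s\mapsto sq^{-1}$, $t\mapsto tq$ to $\pm s^i t^j q^{j-i}$, which is again a unit in $\cL$, so the equality is consistent with the ambiguities declared for both invariants. There is no real obstacle here; the corollary is a formal consequence, and the main content was already carried by Theorem~\ref{theorem-H-K}, whose proof used the conjugation by the diagonal matrix $\La = \diag(1,q,\ldots,q^{k-1})$ to convert the virtual Burau matrices at $(s,t,q)$ into the corresponding matrices at $(sq^{-1},tq,1)$.
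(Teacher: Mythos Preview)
Your proof is correct and follows exactly the approach indicated in the paper, which simply states that the corollary is a direct consequence of Proposition~\ref{HdeterminesG} and Theorem~\ref{theorem-H-K}. Your additional remark on the compatibility of the unit indeterminacies is a welcome clarification but not strictly required.
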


\begin{lemma}
\label{labelthislemma}
Let $K$ be a virtual knot or  link.  Then $H_K(s,t,1)$ is divisible by $1-s$. 
\end{lemma}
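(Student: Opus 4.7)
The plan is to invoke Theorem \ref{Burau}: write $K=\wh\be$ for some virtual braid $\be\in\VB_k$ (by Kamada's virtual Alexander theorem) and express
\[
H_K(s,t,q) = \det\bigl(\Psi(\be)-I_k\bigr).
\]
Divisibility of $H_K(s,t,1)$ by $(1-s)$ in $\cL$ is equivalent to the vanishing $H_K(1,t,1)=0$ in $\ZZ[t^{\pm 1}]$, so it suffices to prove the latter.

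To do this I will look at the explicit matrices in \eqref{Burau-matrix} and observe that at the specialization $(s,q)=(1,1)$ every generator of $\VB_k$ acts by a ``row-stochastic'' matrix, i.e.\ a matrix all of whose rows sum to $1$. Indeed, the two nontrivial rows of $\Psi(\si_i)(1,t,1)$ have entries $0,1$ and $t,1-t$; those of $\Psi(\si_i^{-1})(1,t,1)$ have entries $1-t^{-1}, t^{-1}$ and $1,0$; and $\Psi(\tau_i)(1,t,1)$ is the transposition permutation matrix. In every case the row sums are $1$.

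Since the product of row-stochastic matrices is row-stochastic, $\Psi(\be)(1,t,1)$ also has all row sums equal to $1$. Equivalently, the all-ones column $\mathbf{1}=(1,\ldots,1)^T$ satisfies $\Psi(\be)(1,t,1)\,\mathbf{1} = \mathbf{1}$, so $\mathbf{1}$ lies in the kernel of $\Psi(\be)(1,t,1) - I_k$. Hence $\det\bigl(\Psi(\be)(1,t,1)-I_k\bigr)=0$, which gives $H_K(1,t,1)=0$ and therefore $(1-s)\mid H_K(s,t,1)$.

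There is essentially no obstacle; the entire content is the row-sum observation at $(s,q)=(1,1)$. As a sanity check, one can instead pass through Corollary \ref{GdeterminesH} to rewrite $H_K(s,t,1)=G_K(s,t)$ and invoke the known vanishing $G_K(1,t)=0$ for the generalized Alexander polynomial, but the Burau-based argument above is self-contained within the framework developed in this paper.
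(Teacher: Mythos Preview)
Your argument is correct. The paper proves the lemma differently: it works directly with an arbitrary Wirtinger-like presentation and uses the fundamental Fox identity \eqref{fundamental-Fox-two} to produce the determinantal relation
\[
\det(A)(t-1) + \det(A')(s-1) + \det(A'')(q-1) = 0,
\]
from which setting $q=1$ and coprimality of $t-1$ and $1-s$ give the result. Your route instead passes through the braid/Burau picture and is essentially the same idea the paper itself uses later for part (ii) of Proposition~\ref{H_K-divisibility}, where the left eigenvector $[t^{k-1},\ldots,t,1]$ is exhibited at $q=t^{-1}$; you are just exhibiting the right eigenvector $\mathbf{1}$ at $s=q=1$. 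The trade-off: the paper's Fox-calculus argument is presentation-independent and generalizes verbatim to the twisted setting (Lemma~\ref{s-q-divides-twisted-lemma}), whereas your eigenvector argument is more elementary and self-contained but relies on the explicit Burau matrices and on first expressing $K$ as a braid closure. Both are short and clean; yours has the pleasant feature of unifying the $(1-s)$ and $(1-tq)$ divisibility statements under a single ``find an eigenvector'' mechanism.
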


\begin{proof}
Let $A$ be an Alexander matrix for $K$ and
let $A_1, \ldots, A_n$ be the columns of $A$.
Let $B$ be the matrix obtained by replacing the first column of $A$ with the left side of the relation \eqref{fundamental-Fox-two}.
Then $\det(B)=0$ and the  linearity of the determinant as a function of the first column yields the identity
$$\det(A)(t-1) + \det(A')(s-1) + \det(A'')(q-1) =0,$$
where $A'$ is obtained  from $A$ by replacing its first column with $\left(\frac{\partial r^{}}{\partial s_{}} \right)$ and 
$A''$ is obtained  from $A$ by replacing its first column with $\left(\frac{\partial r}{\partial q} \right)$.
Evaluating at $q=1$ gives
$$H_K(s,t,1)(t-1) =  \left. \det(A')\right|_{q=1}(1-s),$$
and so $H_K(s,t,1)$ is divisible by $1-s$.
\end{proof}

%Combining Theorem \ref{theorem-H-K} and Lemma \ref{labelthislemma} yields:
The next proposition shows that $H_K(s,t,q)$ is always divisible by $(q-s)(1-qt)(1-st)$ if $K$ is a virtual knot.
 
\begin{proposition} 
\label{H_K-divisibility}
\begin{itemize}
\item[(i)] If $K$ is a virtual knot or  link then $H_K(s,t,q)$ is divisible by $q-s$. 
\item[(ii)] If $K$ is a virtual knot or link then  $H_K(s,t,q)$ is divisible by $1-tq$.
\item[(iii)] If $K$ is a virtual knot then  $H_K(s,t,q)$ is divisible by $1-st$.
\end{itemize}
\end{proposition}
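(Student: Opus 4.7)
The plan is to deduce all three divisibilities from the braid-theoretic formula in Theorem \ref{Burau}. Using Kamada's analogue of the Alexander theorem for virtual links, write $K = \wh{\be}$ for some $\be \in \VB_k$; then $H_K(s,t,q) = \det(\Psi(\be) - I_k)$. In each case I will specialize the variables and exhibit an explicit eigenvector with eigenvalue $1$ that is common to every generator matrix $\Psi(\si_i^{\pm 1})$ and $\Psi(\tau_i)$ displayed in \eqref{Burau-matrix}. That eigenvector is then automatically fixed by $\Psi(\be)$, so the determinant vanishes at the specialization and the corresponding linear factor divides $H_K(s,t,q)$.

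For part (i), substitute $s=q$. A direct $2 \times 2$ block calculation using \eqref{Burau-matrix} shows the column vector $v = (q^{k-1}, q^{k-2}, \ldots, q, 1)^{T}$ satisfies $\Psi(\si_i^{\pm 1})\,v = v$ and $\Psi(\tau_i)\,v = v$ at $s=q$, so $\Psi(\be)\,v = v$ and hence $q-s$ divides $H_K(s,t,q)$. For part (ii), substitute $tq=1$. The row vector $u = (t^{k-1}, t^{k-2}, \ldots, t, 1)$ is a common left eigenvector of eigenvalue $1$ for every generator matrix at this specialization (one again checks this block by block), so $u\,\Psi(\be) = u$ and $1-tq$ divides $H_K(s,t,q)$. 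Neither argument uses the number of components, so both statements hold for links as well as for knots.

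Part (iii) is the main obstacle because the knot hypothesis must genuinely enter. At $st=1$ one verifies directly that $\Psi(\si_i)|_{st=1} = \Psi(\si_i^{-1})|_{st=1}$ has the block form $\begin{pmatrix} 0 & s \\ s^{-1} & 0 \end{pmatrix}$ (the $1-s^{-1}t^{-1}$ entry of $\Psi(\si_i^{-1})$ vanishes at $st=1$), and $\Psi(\tau_i)$ already has the analogous form with $q$. Thus $M := \Psi(\be)|_{st=1}$ is a \emph{monomial} matrix (one nonzero entry per row and per column), and may be written uniquely as $M = D\,P$ with $D$ diagonal and $P$ the permutation matrix of the underlying permutation $\pi \in S_k$ of $\be$. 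When $K = \wh\be$ is a knot, $\pi$ is a single $k$-cycle; this forces $M^k = \det(D) \cdot I_k$, so the eigenvalues of $M$ are the $k$-th roots of $\det(D)$.

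To finish, observe that each generator contributes a $2\times 2$ block of determinant $-1$ at $st=1$, giving $\det M = (-1)^m$ where $m$ is the word length of $\be$, while $\det P = \sign(\pi) = (-1)^{k-1}$ since $\pi$ is a $k$-cycle. Because $\pi$ is the product of $m$ transpositions, parity forces $m \equiv k-1 \pmod{2}$, hence $\det D = \det(M)/\det(P) = (-1)^{m-(k-1)} = 1$. Therefore $1$ is an eigenvalue of $M$, so $\det(M - I_k) = 0$, and $(1-st) \mid H_K(s,t,q)$. The hard step is the monomial-matrix parity computation in (iii); parts (i) and (ii) are essentially immediate eigenvector verifications from \eqref{Burau-matrix}.
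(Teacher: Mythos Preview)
Your proof is correct. Part (ii) is exactly the paper's argument. Parts (i) and (iii) take genuinely different routes.

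For (i), the paper does not use an eigenvector: it combines Theorem~\ref{theorem-H-K} (the identity $H_K(s,t,q)=H_K(sq^{-1},tq,1)$) with Lemma~\ref{labelthislemma} (that $(1-s)\mid H_K(s,t,1)$, proved via the fundamental Fox identity). Your right eigenvector $(q^{k-1},\ldots,q,1)^T$ at $s=q$ is more direct and parallels the paper's own proof of (ii); it avoids both Theorem~\ref{theorem-H-K} and the Fox-calculus lemma.

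For (iii), the paper's argument is diagrammatic and external: it verifies that $H_K(s,s^{-1},q)$ is invariant under both forbidden moves $(f1)$ and $(f2)$, then invokes the Kanenobu--Nelson theorem that virtual \emph{knots} can be unknotted using forbidden moves, so $H_K(s,s^{-1},q)=0$. Your monomial-matrix parity argument is purely algebraic and self-contained; the knot hypothesis enters as ``$\pi$ is a single $k$-cycle'' rather than through an unknotting theorem. One step should be made more explicit: from $M^k=\det(D)\,I_k$ alone you only know each eigenvalue is \emph{some} $k$-th root of $\det(D)$, not that $1$ occurs (e.g.\ $M=\zeta I_k$ with $\zeta$ a primitive $k$-th root of unity satisfies $M^k=I_k$ but has no eigenvalue $1$). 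What you are really using is that for $M=DP$ with $P$ a single $k$-cycle the characteristic polynomial is exactly $\lambda^k-\det(D)$, whence $\det(I_k-M)=1-\det(D)=0$. This is standard---expand $\det(\lambda I_k-M)$ directly, or note that $I_k,M,\ldots,M^{k-1}$ are monomial matrices with pairwise distinct underlying permutations and hence linearly independent, forcing the minimal polynomial to have degree $k$---but it is the step doing the real work and deserves a sentence.
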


\begin{proof}
Firstly, (i) is an immediate consequence of combining Theorem \ref{theorem-H-K} and Lemma \ref{labelthislemma}.

%For (ii),
Secondly, by direct calculation, we see that for $1\leq i \leq k-1$,  
\begin{eqnarray*}
{[t^{k-1}, \cdots, t, 1 ]} \, \Psi(\si_i)(s,t,t^{-1}) &=& {[t^{k-1}, \cdots,t, 1]} \quad \text{and} \\ 
{[t^{k-1}, \cdots, t, 1]}  \, \Psi(\tau_i)(s,t,t^{-1}) &=& {[t^{k-1}, \cdots,t, 1]}.
\end{eqnarray*}
Writing a given virtual braid $\be \in \VB_k$ as a word in the generators $\si_i, \tau_i,$ it follows that ${[t^{k-1}, \cdots,t, 1 ]}$ is always a left eigenvector of $\Psi(\be)(s,t,t^{-1})$ with eigenvalue $\la=1$.

If $K$ is a virtual knot or link, and $\be \in \VB_k$ is a braid with closure $\wh{\be} =K$, then $H_K(s,t,t^{-1}) = \det\left(\Psi(\be)(s,t,t^{-1})-I_k\right)=0$ by Theorem \ref{Burau}. Thus $1-qt$ is a factor of $H_K(s,t,q)$, and this shows (ii).

\begin{figure}[ht]
\centering
\includegraphics[scale=0.64]{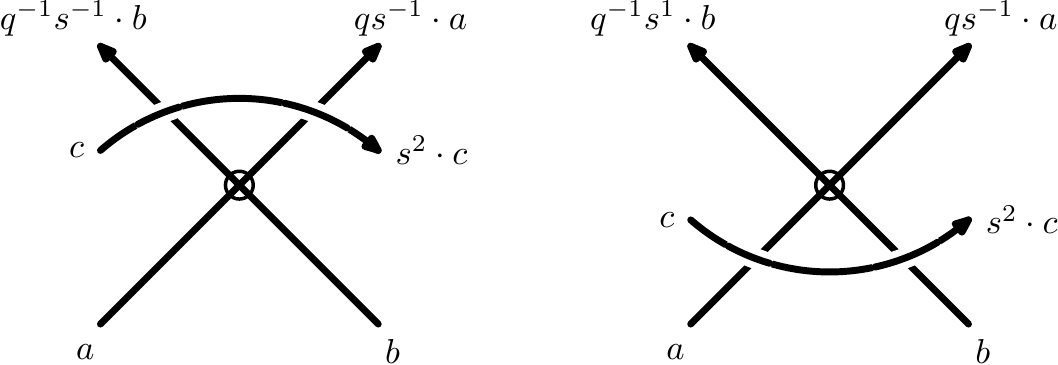} \qquad \includegraphics[scale=0.64]{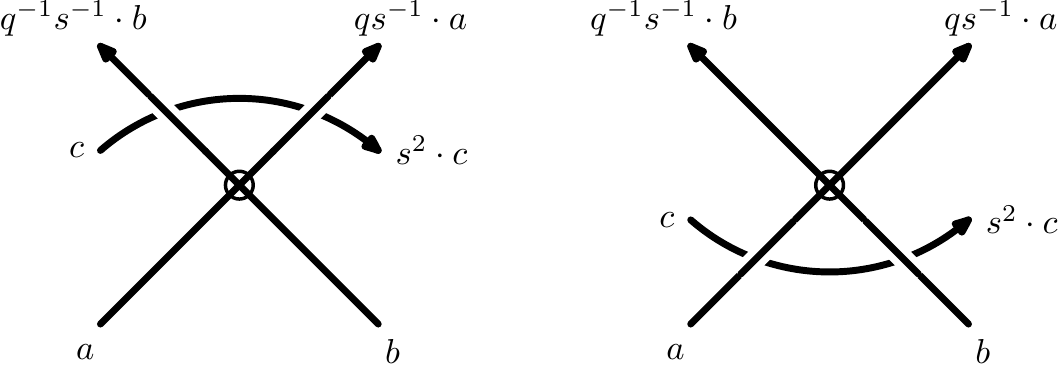}
\caption{Invariance of $H_K(s,s^{-1},q)$ under $(f1)$ and $(f2)$.}
\label{forb}
\end{figure}      

%\begin{figure}[ht]
%\centering
%\includegraphics[scale=0.80]{forbidden-2.pdf}
%\caption{Invariance of $H_K(s,s^{-1},q)$ under $(f2)$.}
%\label{forb-2}
%\end{figure}      

Thirdly, notice that $H_K(s,t,q)$ is invariant under $(f1)$ and $(f2)$, the first and second forbidden moves, upon setting $t=s^{-1}$. This can be seen using the diagrammatic  argument depicted in Figure \ref{forb}.
From this it follows that $H_K(s,s^{-1},q)=0$ for any virtual knot $K$, because virtual knots  can unknotted using $(f1)$ and $(f2)$ (see \cite{Kanenobu, Nelson-for}).   
 Thus  if $K$ is a virtual knot, then $1-st$ is a factor of $H_K(s,t,q)$, and this gives  (iii). Note that the same argument does not hold for virtual links, as it is not generally true that virtual links can be trivialized using $(f1)$ and $(f2)$.  (In \cite{Okabayashi}, Okabayashi gives a classification of virtual links up to forbidden moves.)
% This concludes the proof of the proposition.
\end{proof}

%%%%%%%%%%%%%%%%%%%%%%%%%%%%%%%%%%%%%%%%%%%%%%%%%%%%%%%%%%%%

\section{A normalization for the virtual Alexander polynomial} \label{section-5} 
 
In this section, we give a natural normalization for the virtual Alexander polynomial $H_K(s,t,q)$ that is well defined up to multiplication by $(st)^i.$ This is achieved by
interpreting the virtual Alexander polynomial in terms of the virtual Burau representation (see Theorem \ref{Burau}). Key ingredients of the proof are the results of Kamada in \cite{Kamada, KL} giving virtual analogues of the Alexander and Markov theorems.
The virtual Alexander theorem states that every virtual link is obtained as the closure of a virtual braid.
The virtual Markov theorem, recalled below, gives necessary and sufficient conditions for two virtual braids $\be \in \VB_{k}$ and $\be' \in \VB_{k'}$ to represent the same virtual link.  

\begin{theorem}[Kamada \cite{Kamada}] \label{virtualMarkov}
Two virtual braids have equivalent closures as virtual links if and only if they are related to each other by a sequence of the following
{\rm VM1, VM2}  and {\rm VM3} moves:
\begin{enumerate}[hspace{0pt}]
\item[VM1:] a conjugation in the virtual braid group,
\item[VM2:] a positive, negative, or virtual right stabilization, and their inverse operations,
\item[VM3:] a right or left virtual exchange move.
\end{enumerate}
\end{theorem}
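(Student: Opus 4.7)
The plan is to follow the classical Markov-theorem strategy, adapted to the virtual setting, and this is essentially the approach taken by Kamada in \cite{Kamada}. First I would verify the easy ``if'' direction by direct diagrammatic inspection: each of the moves VM1, VM2, VM3 preserves the virtual-link type of the closure. For VM1, the standard cyclic sliding argument shows that $\wh{\be}$ and $\wh{\ga\be\ga^{-1}}$ are ambient isotopic via moves performed in a neighborhood of the closure arcs. For VM2, a positive or negative right stabilization $\be \mapsto \be \si_k^{\pm 1}$ with $\be \in \VB_k$ produces a diagram in which the new crossing can be removed from the closure by a single $(r1)$ move, and virtual right stabilization $\be \mapsto \be \tau_k$ can be removed by a $(v1)$ move. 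For VM3, the virtual exchange moves reduce, after closing up, to a sequence of $(v2)$, $(v3)$, $(v4)$ moves that can be drawn out explicitly.

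For the hard ``only if'' direction, the starting point is Kamada's virtual Alexander theorem, which asserts that any virtual link diagram admits a braid presentation. Given two virtual braids $\be, \be'$ whose closures are equivalent as virtual links, one can fix a finite sequence of generalized Reidemeister moves $(r1)$--$(r3)$, $(v1)$--$(v4)$ and planar isotopies carrying $\wh{\be}$ to $\wh{\be'}$. The strategy is then to braid each intermediate diagram in a controlled way and track, move by move, the effect on the corresponding virtual braid word. One standard approach is to use a ``threading'' construction (a braid axis together with a radial foliation) and to analyze which local modifications of the braided diagram are forced by each Reidemeister move in the foliated region.

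For each of the seven move types I would produce a local model and verify that the induced change on braid words lies in the subgroup generated by VM1, VM2, and VM3. The cases $(r1)$ and $(v1)$ correspond directly to stabilizations, $(r2)$ and $(v2)$ to conjugations combined with braid relations, and $(r3)$, $(v3)$, $(v4)$ follow from the defining relations in $\VB_k$ together with VM1. When a move occurs far from the braid axis, no modification beyond a conjugation is needed; when it straddles the axis, a stabilization may have to be inserted or removed. The main obstacle, and the genuinely new content relative to the classical Markov theorem, is handling configurations where a strand of the braid crosses the axis \emph{virtually}. Here the classical exchange move no longer suffices, and one must invoke the right and left virtual exchange moves of VM3 to absorb the effect of sliding a virtual crossing from one side of the axis to the other; verifying that VM3 is adequate for every such local reconfiguration is the core combinatorial lemma of the proof and is where Kamada's argument does the bulk of its work.
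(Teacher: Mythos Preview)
The paper does not prove this statement at all: Theorem~\ref{virtualMarkov} is quoted from Kamada \cite{Kamada} (with \cite{KL} also mentioned) and used as a black box to justify the normalization of the virtual Alexander polynomial in Section~\ref{section-5}. There is therefore no ``paper's own proof'' to compare your proposal against.

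Your sketch is a plausible high-level outline of a Markov-type argument, and you correctly identify the easy direction and the overall shape of the hard direction. But since the paper gives no proof, what you have written goes well beyond what is required here; for the purposes of this paper a one-line citation to \cite{Kamada} is all that is expected. If your intent is to reconstruct Kamada's actual argument, note that he works with an $L$-move formulation (see also Kauffman--Lambropoulou \cite{KL}) rather than the threading/foliation picture you describe, so your outline, while reasonable in spirit, does not match the cited source in its technical implementation.
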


\begin{figure}[ht]
\centering
\includegraphics[scale=1.00]{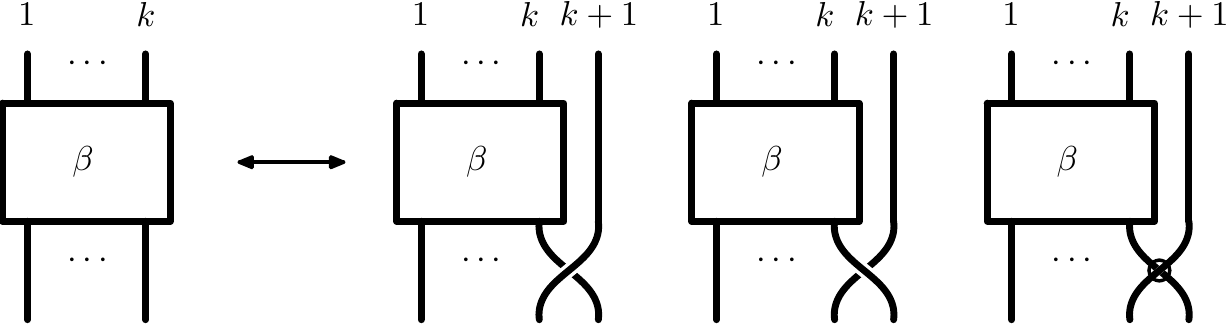}
\caption{The right stabilization moves: positive, negative, virtual types.}
\label{kamada-stabilization}
\end{figure}   

\begin{figure}[ht]
\centering
\includegraphics[scale=0.90]{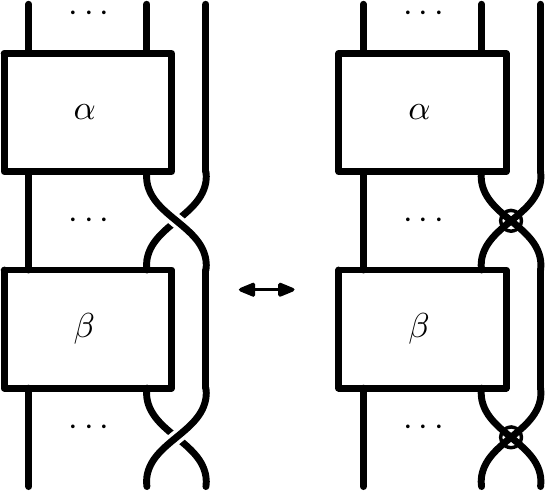}\qquad \qquad   \includegraphics[scale=0.90]{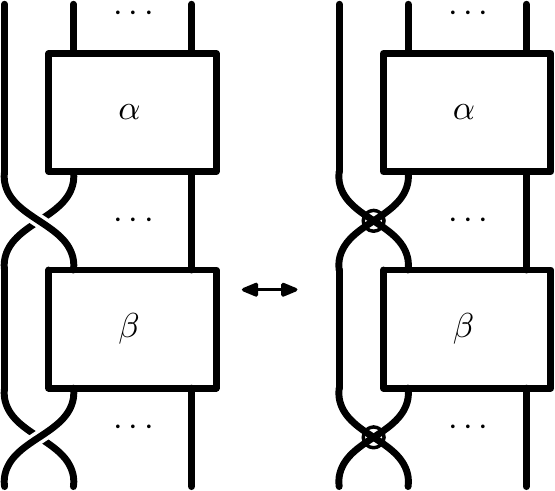}
\caption{The right and left virtual exchange moves.}
\label{Kamada-exchange}
\end{figure}   
Kamada showed that, unlike in the classical case, a virtual exchange move is not a consequence of the other moves, see \cite{Kamada-04}.
 
Let $\cL = \ZZ[s^{\pm 1}, t^{\pm 1}, q^{\pm1}]$.
We use the virtual Burau representation $\Psi\colon \VB_k \to GL_k(\cL)$ (Definition \ref{def_virtual_Burau_rep})
to define an invariant $H_\be(s,t,q)$ of  a  virtual braid  $\be \in \VB_k$ via the formula
 $$H_{\be}(s,t,q) = \det\left(\Psi(\be) \, - \, I_k\right).$$
 Note that Theorem \ref{Burau} implies that if $K =\wh \be$ is the closure of $\be$ then
 $H_K(s,t,q) =H_{\be}(s,t,q)$ up to units in $\cL$.
 
The normalized virtual Alexander polynomial is obtained by studying the effect of the virtual Markov moves on $H_\be(s,t,q).$ 
 
We show that  $H_{\be}$ is invariant under a VM1 move.  For $\ga \in \VB_k$, 
\begin{eqnarray*}
H_{\ga \be \ga^{-1}}(s,t,q) &=& \det\left(\Psi(\ga \be \ga^{-1}) \, - \, I_k\right) =   \det\left(\Psi(\ga) \Psi( \be)  {\Psi(\ga)}^{-1} \, - \, I_k  \right) \\
                                             &=&   \det\left(\Psi(\be) \, - \, I_k\right) = H_{\be}(s,t,q).
\end{eqnarray*}

Next, we examine how $H_\be$ changes under a VM2 move.

There is a natural injective stabilization homomorphism
$\mu_k \colon \VB_k \hookrightarrow \VB_{k+1}$ given 
by taking the generators $\si_i$,  $\tau_i$,  $1 \leq i \leq k-1$,
of $\VB_k$ to the generators designated by the same symbols in
$\VB_{k+1}$.
We write $\be$ for $\mu_k(\be)$ when the meaning is clear from the context.

\begin{proposition}
\label{VM2-inv}
Let $\be \in \VB_k$.   If  $\be'$ is obtained from $\be$ by a right stabilization move of virtual type or of positive type then $H_{\be'}(s,t,q) =  -H_\be(s,t,q)$.
If  $\be'$ is obtained from $\be$ by a right stabilization move of negative type then $H_{\be'}(s,t,q) =  -(st) H_\be(s,t,q)$.
\end{proposition}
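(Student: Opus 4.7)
The plan is to exploit the multiplicativity of the virtual Burau representation together with the block structure of $\Psi(\mu_k(\be))$ and the explicit $2\times 2$ forms of $\Psi(\si_k^{\pm 1})$ and $\Psi(\tau_k)$ in \eqref{Burau-matrix}. A right stabilization of $\be \in \VB_k$ produces $\be' = \mu_k(\be)\cdot g \in \VB_{k+1}$, where $g\in\{\si_k,\si_k^{-1},\tau_k\}$ according to type; since $\Psi$ is a homomorphism,
$$\Psi(\be') = \Psi(\mu_k(\be))\,\Psi(g).$$
Because every generator appearing in $\mu_k(\be)$ has index at most $k-1$, a direct inspection of \eqref{Burau-matrix} shows that $\Psi(\mu_k(\be))$ is the block-diagonal matrix $\Psi(\be) \oplus (1)$ of size $(k+1)\times(k+1)$.

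Next, observe that $\Psi(g)$ agrees with $I_{k+1}$ outside its $2\times 2$ block in rows and columns $k, k+1$. Writing $\Psi(\be) = \left(\begin{smallmatrix} B & v \\ w & b \end{smallmatrix}\right)$ with $B$ of size $(k-1)\times(k-1)$, multiplying out produces a matrix $\Psi(\be') - I_{k+1}$ whose last row has only two nonzero entries, appearing in columns $k$ and $k+1$, and whose $k$-th and $(k+1)$-st columns are expressed in terms of $v$, $b$, and the $2\times 2$ block of $\Psi(g)$. In each of the three cases, one then applies a single column operation — adding an appropriate scalar multiple of column $k$ to column $k+1$ — chosen so that the $(k+1,k+1)$ entry becomes zero. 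This operation simultaneously yields clean expressions in the new column $k+1$: its upper block becomes a scalar multiple $\lambda$ of $v$, and its $k$-th entry becomes $\lambda(b-1)$ for the same scalar.

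Finally, expand the determinant along the last row, which now has a single nonzero entry in position $(k+1,k)$, contributing the sign $(-1)^{(k+1)+k} = -1$. Factoring the common scalar $\lambda$ out of the last column of the resulting $k\times k$ matrix produces exactly $\det(\Psi(\be) - I_k) = H_\be(s,t,q)$, yielding $H_{\be'} = -\lambda\cdot H_\be$ where $\lambda \in \{1, st\}$ depending on the case. The principal obstacle is just careful bookkeeping in verifying these scalars and the cancellations in the column operation: the asymmetry between the off-diagonal entries $1-st$ in $\Psi(\si_k)$ and $1-s^{-1}t^{-1}$ in $\Psi(\si_k^{-1})$ is what accounts for the different prefactors $1$ and $st$ in the positive, negative, and virtual cases. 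Beyond this arithmetic, no further ingredients are required.
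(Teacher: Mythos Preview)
Your approach is sound and essentially the same as the paper's: both compute $\det(\Psi(\be')-I_{k+1})$ by a single column operation that reduces the $(k+1)\times(k+1)$ matrix to one whose determinant is visibly a scalar times $\det(\Psi(\be)-I_k)$. The paper multiplies on the right by an elementary matrix $E$ (effectively adding a multiple of column $k+1$ to column $k$) to obtain a block upper-triangular matrix; you instead add a multiple of column $k$ to column $k+1$ to kill the $(k+1,k+1)$ entry and then expand along the last row. These are minor variants of the same computation.

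There is one bookkeeping slip to correct. In your final paragraph you say the last-row expansion ``contributes the sign $(-1)^{(k+1)+k}=-1$'' and conclude $H_{\be'}=-\la\cdot H_\be$. But the cofactor expansion also picks up the \emph{value} of the surviving entry in position $(k+1,k)$, not just its positional sign. That entry is $q^{-1}$ (virtual case), $s^{-1}$ (positive case, $g=\si_k^{-1}$), or $t$ (negative case, $g=\si_k$), while the scalar you factor out of the new column $k+1$ is respectively $\la=q$, $s$, $s$. The correct prefactor is therefore $-(\text{entry})\cdot\la$, giving $-1$, $-1$, $-st$ in the three cases --- exactly the proposition. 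As written, your symbol $\la$ is used with two meanings (first the column scalar $q$ or $s$, then the total prefactor $1$ or $st$); the omitted row-entry factor is what reconciles them.
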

\begin{proof} Let $\Psi \colon \VB_k \to GL_k(\cL)$ and $\Psi'\colon \VB_{k+1} \to GL_{k+1}(\cL)$ be the virtual Burau representations,
and set $A = \Psi(\be)$ and $A' = \Psi'(\be')$. 
We consider separately the case of a virtual, positive, and negative stabilization.

%%%%%%%%%

\noindent
{\it Virtual stabilization.} ~We have $\be' =\be \tau_k$.
Thus, the matrices $A$ and $A'$ are related by:
$$A' = \Psi'(\be \tau_k) = \Psi'(\be) \Psi'(\tau_k) = 
\begin{bmatrix}
A      &\hspace{0.3em} {\bf 0}  \\[0.3em]
{\bf 0} &\hspace{0.3em} 1
\end{bmatrix} 
\begin{bmatrix}
I_{k-1} &  \cdots  & {\bf 0} \\ 
\vdots & 0 & q \\
{\bf 0} & {\,q^{-1}} & 0
\end{bmatrix}.$$
Let $A_k$ denote the $k$-th column of $A$ and let $E$ be the elementary matrix
$$ E = \begin{bmatrix}
I_{k-1} &  \cdots  & {\bf 0} \\ 
\vdots & 1 & 0 \\
{\bf 0} & {\,q^{-1}} & 1
\end{bmatrix}.$$
Observe that
$$ \left( A' - I_{k+1} \right) E= 
\begin{bmatrix}
A  - I_k      &  A_k q  \\[0.5em]
{\bf 0} & -1
\end{bmatrix}.$$
Since $\det(E)=1$, It follows that
\begin{eqnarray*}
H_{\be'}(s,t,q) &=& 
\det\left(A' - I_{k+1}\right) =   \det\left(\left(A' - I_{k+1}\right)E\right)  \\
&=&   \det\left(  \begin{bmatrix}  A  - I_k      &  A_k q  \\[0.5em]  
{\bf 0} & -1 \end{bmatrix} \right) = -\det\left( A  - I_k \right) = -H_{\be}(s,t,q).
\end{eqnarray*}

\smallskip

%%%%%%%%%

\noindent
{\it Positive stabilization.} ~We have $\be' = \be  \si_k^{-1}$.
Thus the matrices $A$ and $A'$ are related by:
$$ A' = \Psi'(\be \si_k^{-1}) = \Psi'(\be) \Psi'(\si_k^{-1}) =\begin{bmatrix}
A & \hspace{0.3em} {\bf 0}  \\[0.3em]
{\bf 0} & \hspace{0.3em} 1
\end{bmatrix} 
\begin{bmatrix}
I_{k-1} &  \cdots  & {\bf 0} \\ 
\vdots & 1- s^{-1} t^{-1}& t^{-1} \\
{\bf 0} & s^{-1} & 0
\end{bmatrix}.$$
Let $A_k$ denote the $k$-th column of $A$ and let $E$ be the elementary matrix
$$E = \begin{bmatrix}
I_{k-1} &  \cdots  & {\bf 0} \\ 
\vdots & 1 & 0 \\
{\bf 0} & {\,s^{-1}} & 1
\end{bmatrix}.$$
Observe that
$$ \left( A' - I_{k+1} \right) E= 
\begin{bmatrix}
A  - I_k      &  A_k t^{-1}  \\[0.5em]
{\bf 0} & -1
\end{bmatrix}.$$
Since $\det(E)=1$, it follows that
\begin{eqnarray*}
H_{\be'}(s,t,q) &=& \det\left(A' - I_{k+1}\right) 
= \det\left(\left(A' - I_{k+1}\right)E\right)  \\
&=&   \det\left(  \begin{bmatrix}  A  - I_k &  A_k t^{-1} \\[0.5em]  
{\bf 0} & -1 \end{bmatrix} \right) = -\det\left( A  - I_k \right) = -H_{\be}(s,t,q).
\end{eqnarray*}

%%%%%%%%%%%%%

\noindent
{\it Negative stabilization.} ~We have $\be' = \be  \si_k$.
Thus, the matrices $A$ and $A'$ are related by:
$$ A' = \Psi'(\be \si_k) = \Psi'(\be) \Psi'(\si_k) =
\begin{bmatrix}
A & \hspace{0.3em} {\bf 0}  \\[0.3em]
{\bf 0} & \hspace{0.3em} 1
\end{bmatrix} 
\begin{bmatrix}
I_{k-1} &  \cdots  & {\bf 0} \\ 
\vdots & 0 & s \\
{\bf 0} & t & 1-st
\end{bmatrix}.$$
Let $A_k$ denote the $k$-th column of $A$ and let $E$ be the elementary matrix
$$ E = \begin{bmatrix}
I_{k-1} & \cdots & {\bf 0} \\ 
\vdots & 1 & 0 \\
{\bf 0} & {\,s^{-1}} & 1
\end{bmatrix}.$$
Observe that
$$ \left( A' - I_{k+1} \right) E= 
\begin{bmatrix}
A  - I_k      &  A_k s  \\[0.5em]
{\bf 0} & -st
\end{bmatrix}.$$
Since $\det(E)=1$, it follows that
\begin{eqnarray*}
H_{\be'}(s,t,q) &=& \det\left(A' - I_{k+1}\right) 
=   \det\left(\left(A' - I_{k+1}\right)E\right) \\
&=&   \det\left(  \begin{bmatrix}  A  - I_k      &  A_k s \\[0.5em]  
{\bf 0} & -st \end{bmatrix} \right) = -(st)\det\left( A  - I_k \right) = -(st)H_{\be}(s,t,q).
\end{eqnarray*}
\end{proof}

%%%%%%%%%

Next, we show that $H_\be$ is unchanged by a VM3 move.

\begin{proposition}
\label{VM3-inv}
Let $\be_1 \in \VB_{k+1}$.   If  $\be_2$ is obtained from $\be_1$ by a right or a left exchange move then $H_{\be_1}(s,t,q) =  H_{\be_2}(s,t,q)$.
\end{proposition}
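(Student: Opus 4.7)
The plan is to mirror the structure of the proof of Proposition \ref{VM2-inv}: fix a standard form for each exchange move, write $\Psi(\be_1)$ and $\Psi(\be_2)$ as products in which all but a short subword is common, and then reduce $\det(\Psi(\be_i) - I_{k+1})$ by elementary row/column operations. From Kamada \cite{Kamada}, each exchange move acts locally on the braid word; the right virtual exchange move has the form $\be_1 = \alpha\, \si_k^{\pm 1}\, w$ and $\be_2 = \alpha\, \si_k^{\mp 1}\, w'$ where $w, w'$ are conjugate short words in $\tau_{k-1}, \si_{k-1}^{\pm 1}$ and the common prefix $\alpha$ only involves the first $k$ strands. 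Using the multiplicativity of $\Psi$, the matrices $\Psi(\be_1)$ and $\Psi(\be_2)$ share a common left factor $\Psi(\alpha)$, and the remaining factors are identity outside a $3\times 3$ block indexed by the last three strands.

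Step 1. Pin down the precise local form of the right exchange move. Step 2. Write $\Psi(\be_1) = \Psi(\alpha) X_1$ and $\Psi(\be_2) = \Psi(\alpha) X_2$, where $X_1, X_2$ agree with $I_{k+1}$ except in the last three rows and columns. Step 3. Using \eqref{Burau-matrix} one computes $X_1$ and $X_2$ explicitly and checks the identity $X_1 = E_1 X_2 E_2$ for explicit elementary matrices $E_1, E_2$ of determinant one chosen so that conjugation by them preserves the block structure of $\Psi(\alpha) - I_{k+1}$ on the first $k-2$ rows and columns. Step 4. Taking determinants of $\Psi(\alpha) X_i - I_{k+1}$ and pushing $E_1, E_2$ through the $-I_{k+1}$ term (as was done in the VM2 proof with the matrix $E$) yields $H_{\be_1}(s,t,q) = H_{\be_2}(s,t,q)$.

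For the left virtual exchange move, the cleanest approach is to reduce it to the right exchange via VM1 invariance. A left exchange at the bottom of a virtual braid can be cyclically rotated to the top, and cyclic rotation is a conjugation in $\VB_{k+1}$; since $H_\be$ is invariant under VM1, this converts the left exchange computation to the right exchange computation already done. Alternatively, one repeats Steps 1--4 with the local subword now on the left side of the word, replacing the common left factor $\Psi(\alpha)$ by a common right factor.

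The main obstacle will be Step 3: identifying the correct elementary matrices $E_1, E_2$ that collapse the $3\times 3$ local block to a form where the two determinants visibly agree. Unlike the VM2 stabilization moves, where the extra strand is essentially passive and a single elementary matrix suffices, here both the $k$-th and $(k+1)$-th strands are genuinely active and their interaction with $\Psi(\alpha)$ via the last column vector(s) of $\Psi(\alpha) - I_k$ must be tracked carefully. The non-abelian cocycle identity of Proposition \ref{cocycle-properties}(ii) provides the conceptual reason for the cancellation, and I expect the explicit verification to reduce to a short matrix identity among the basic matrices $\Psi(\si_k^{\pm 1})$ and $\Psi(\tau_{k-1})$.
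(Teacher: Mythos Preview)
Your proposal rests on an incorrect description of the virtual exchange move, and the plan breaks down at Step~1. From Figure~\ref{Kamada-exchange}, the right virtual exchange move relates
\[
\be_1 \;=\; \al\,\si_k\,\be\,\si_k^{-1}
\qquad\text{and}\qquad
\be_2 \;=\; \al\,\tau_k\,\be\,\tau_k,
\]
where \emph{both} $\al$ and $\be$ are arbitrary braids in $\VB_k$ (embedded in $\VB_{k+1}$ on the first $k$ strands). There is no single common prefix followed by a short local word: the two braids differ in \emph{two} places, and an arbitrary $\be$ sits between them. Consequently one cannot write $\Psi(\be_i)=\Psi(\alpha)X_i$ with $X_i$ supported on a fixed small block, and there is no ``$3\times3$ local block'' in play (only strands $k$ and $k+1$ are touched by $\si_k$ and $\tau_k$). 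Your reduction of the left exchange to a conjugate of the right one is likewise unjustified: the left exchange is a genuinely different move (on strands $1,2$ rather than $k,k+1$), not a cyclic rotation of the right one.

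The paper's argument has to contend with both unknown matrices $A=\Psi(\al)$ and $B=\Psi(\be)$. Writing $P=\Psi(\si_k)$ and $Q=\Psi(\tau_k)$, it sets
\[
M_1 = F_1^{-1}\bigl(A'PB'P^{-1}-I_{k+1}\bigr)E,
\qquad
M_2 = F_2\bigl(A'QB'Q-I_{k+1}\bigr)F_2^{-1},
\]
for carefully chosen invertible matrices $E,F_1,F_2$ with $\det(E)=\det(F_1)$, and then verifies the \emph{matrix identity} $M_1=M_2$ by computing each product explicitly (the last block columns $A_k,B_k$ of $A,B$ enter, but the identity holds for arbitrary $A,B$). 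The key point your outline misses is that one must sandwich by different matrices on the two sides and allow the ``correction'' term $F_1^{-1}E$ replacing $I_{k+1}$; a search for $E_1,E_2$ with $X_1=E_1X_2E_2$ is not the right ansatz here because of the intervening factor $B'$. For the left exchange, the paper repeats the computation directly after first rewriting $\det(A'PB'P^{-1}-I)=\det(B'P^{-1}A'P-I)$.
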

\begin{proof} 
For $\al$ and $\be$ as in Figure \ref{Kamada-exchange},
let  $A = \Psi(\al)$ and $B= \Psi(\be)$  (these are $k \times k$ matrices).
We consider separately the cases of right and left virtual exchange moves.

%%%%%%

\noindent
{\it Right virtual exchange move.} ~Let
$$ A'= \begin{bmatrix}
A   & \hspace{0.3em} {\bf 0}  \\[0.3em]
{\bf 0}   & \hspace{0.3em} 1
\end{bmatrix},
\qquad
B'= \begin{bmatrix}
B   & \hspace{0.3em} {\bf 0}  \\[0.3em]
{\bf 0}   & \hspace{0.3em} 1
\end{bmatrix},$$

$$
P = \begin{bmatrix}
I_{k-1} &  \cdots  & {\bf 0} \\ 
\vdots & 0 & s \\
{\bf 0} & t & 1-st
\end{bmatrix},
\qquad
Q = \begin{bmatrix}
I_{k-1} &  \cdots & {\bf 0} \\ 
\vdots & 0 & q \\
{\bf 0} & q^{-1} & 0
\end{bmatrix},$$

$$ E = \begin{bmatrix}
I_{k-1} &  \cdots  & {\bf 0} \\ 
\vdots   & 1 & 0 \\
{\bf 0} & -t+s^{-1} & t
\end{bmatrix},
\qquad
F_1= \begin{bmatrix}
I_k        & {\bf 0}  \\[0.3em]
{\bf 0}     & t
\end{bmatrix},
\qquad
F_2 = \begin{bmatrix}
I_k  & \hspace{0.3em} {\bf 0}  \\[0.3em]
{\bf 0}    & \hspace{0.3em} q
\end{bmatrix}.$$

Since $\be_1 = \al \si_k \be \si_k^{-1}$ and  $\be_2 = \al \tau_k \be \tau_k$, we have
$$
H_{\be_1} =  \det\left(A'P B' P^{-1} - I_{k+1}\right)   \quad  \text{ and  }  \quad
H_{\be_2}  = \det\left(A'Q B' Q- I_{k+1}\right).
$$

%%%%%%%%%%%%%%%%%%%%%%%%%%%%%%%%%%%%%%%%%%%%%%%%%%%%%%%%%%%%%%%%%% 

Consider the matrices
$$
M_1 = F_1^{-1} \left(A' P B' P^{-1}  - I_{k+1}\right) E   \quad \text{   and   }   \quad
M_2 =  F_2 \left(A' Q B'Q - I_{k+1}\right) F_2^{-1}.
$$

Since $\det(E)=t=\det(F_1)$, it follows that $\det(M_1) = H_{\be_1}$.   Also, $\det(M_2) = H_{\be_2}$.

\noindent{\bf Claim:} $M_1 = M_2$. \\
\noindent
Let $A_k$ and $B_k$ denote the $k$-th column of $A$ and $B$, respectively, and let
$\wh A_k$ and $\wh B_k$ be the $k\times (k-1)$ matrices obtained  by removing the $k$-th column from $A$ and $B$, respectively.

%%%%%%%%%%%%%
One easily finds that
\begin{eqnarray*}
\left( F_1^{-1} A' \right)P &=&
\begin{bmatrix}
A     & \hspace{0.3em}  {\bf 0}  \\[0.3em]
{\bf 0}     & \hspace{0.3em} t^{-1}
\end{bmatrix}
\begin{bmatrix}
I_{k-1} &  \cdots  & {\bf 0} \\ 
\vdots & 0 & s \\
{\bf 0} & t & 1-st
\end{bmatrix} = 
\begin{bmatrix}
{\wh A}_k & {\bf 0}   &  A_k \, s\\[0.5em] 
{\bf 0} & 1 & t^{-1}-s
\end{bmatrix} \quad \text{ and } \\
B'\left(P^{-1}E\right)  
&= &
\begin{bmatrix}
B   & \hspace{0.3em}  {\bf 0}  \\[0.3em]
{\bf 0}   & \hspace{0.3em} 1
\end{bmatrix}
\begin{bmatrix}
I_{k-1} &  \cdots  & {\bf 0} \\ 
\vdots & 0 & 1 \\
{\bf 0} & s^{-1} & 0
\end{bmatrix} 
=
\begin{bmatrix}
{\wh B}_k &{\bf 0}   &  B_k\\[0.5em] 
{\bf 0} & s^{-1} & 0
\end{bmatrix}.
\end{eqnarray*}

It follows that
\begin{equation} \label{M-1}
\begin{split}
M_1 &=  F_1^{-1} A' P \cdot B'P^{-1} E ~-~ F_1^{-1} E \\
&= \begin{bmatrix}
{\wh A}_k & {\bf 0}   &  A_k \, s \\[0.5em] 
{\bf 0} & 1 & t^{-1}-s
\end{bmatrix}
\begin{bmatrix}
{\wh B}_k & {\bf 0}   &  B_k\\[0.5em] 
{\bf 0} & s^{-1} & 0
\end{bmatrix} ~-~
\begin{bmatrix}
I_{k-1} &  \cdots  & {\bf 0} \\ 
\vdots   & 1 & 0 \\
{\bf 0} & -1+(st)^{-1} & 1
\end{bmatrix}.
\end{split}
\end{equation}

%%%%%%%%%%%%%%%%%%%%%%%%%%%%
An easy computation shows that 
\begin{eqnarray*}
B' \left(Q F_2^{-1}\right) &=& \begin{bmatrix}
B    & \hspace{0.3em} {\bf 0}  \\[0.3em]
{\bf 0}    & \hspace{0.3em} 1
\end{bmatrix}
\begin{bmatrix}
I_{k-1} &  \cdots & {\bf 0} \\ 
\vdots & 0 & 1 \\
{\bf 0} & q^{-1} & 0
\end{bmatrix}
=
\begin{bmatrix}
{\wh B}_k & {\bf 0}   & B_k\\[0.5em] 
{\bf 0} & q^{-1} & 0
\end{bmatrix} \quad \text{ and } \\
\left( F_2 A'\right) Q &=&
\begin{bmatrix}
A   & \hspace{0.3em} {\bf 0}  \\[0.3em]
{\bf 0}   & \hspace{0.3em} q
\end{bmatrix}
\begin{bmatrix}
I_{k-1} &  \cdots & {\bf 0} \\ 
\vdots & 0 & q \\
{\bf 0} & q^{-1} & 0
\end{bmatrix}
=
\begin{bmatrix}
{\wh A}_k &\hspace{0.3em} {\bf 0}   & \hspace{0.3em} A_k \, q \\[0.5em] 
{\bf 0} &\hspace{0.3em} 1 &\hspace{0.3em} 0
\end{bmatrix}.
\end{eqnarray*}

Hence
\begin{equation} \label{M-2}
\begin{split}
M_2 &= 
 \begin{bmatrix}
{\wh A}_k &\hspace{0.3em} {\bf 0}   & \hspace{0.3em}  A_k \, q\\[0.5em] 
{\bf 0} & \hspace{0.3em} 1 & \hspace{0.3em} 0
\end{bmatrix}
\begin{bmatrix}
{\wh B}_k & {\bf 0}   &  B_k\\[0.5em] 
{\bf 0} & q^{-1} & 0
\end{bmatrix} ~-~ I_{k+1}.
\end{split}
\end{equation}

The claim now follows by direct comparison of Equations \eqref{M-1} and \eqref{M-2}, and this shows that 
$H_{\be_1} = \det(M_1) = \det(M_2) = H_{\be_2} $.

\smallskip

%%%%%%

\noindent
{\it Left virtual exchange move.} ~Let
$$ A'= \begin{bmatrix}
1 & \hspace{0.3em} {\bf 0}  \\[0.3em]
{\bf 0}     & \hspace{0.3em}  A
\end{bmatrix},
\qquad
B'= \begin{bmatrix}
1  & \hspace{0.3em} {\bf 0}  \\[0.3em]
{\bf 0} & \hspace{0.3em} B 
\end{bmatrix},$$

$$ P = \begin{bmatrix}
0 & s  & \hspace{0.3em} {\bf 0} \\ 
t  & 1-st & \hspace{0.3em}\vdots  \\[0.4em]
{\bf 0} & \cdots & \hspace{0.3em} I_{k-1}
\end{bmatrix},
\qquad
Q = \begin{bmatrix}
0 & q  & \hspace{0.3em} {\bf 0} \\ 
q^{-1}  & 0 & \hspace{0.3em} \vdots \\[0.4em]
{\bf 0}& \cdots & \hspace{0.3em} I_{k-1}
\end{bmatrix},$$

$$ E = \begin{bmatrix}
t^{-1}  & s - t^{-1} & \hspace{0.3em} {\bf 0} \\ 
0  & 1 & \hspace{0.3em}\vdots \\[0.4em]
{\bf 0} &  \cdots & \hspace{0.3em} I_{k-1}
\end{bmatrix},
\qquad
F_1= \begin{bmatrix}
t       & \hspace{0.3em} {\bf 0}  \\[0.3em]
{\bf 0}     & \hspace{0.3em} I_k 
\end{bmatrix},
\qquad
F_2 = \begin{bmatrix}
q      & \hspace{0.3em} {\bf 0}  \\[0.3em]
{\bf 0}     & \hspace{0.3em} I_k 
\end{bmatrix}.$$

Since $\be_1 = \al \si_1 \be \si_1^{-1}$ and  $\be_2 = \al \tau_1 \be \tau_1$, we have
\begin{eqnarray*}
H_{\be_1} &=&  \det\left(A'PB' P^{-1} - I_{k+1}\right) =    \det\left(B' P^{-1} A'P  - I_{k+1}\right) \quad  \text{ and }\\
H_{\be_2}  &=& \det\left(A'Q B' Q- I_{k+1}\right) =    \det\left(B' Q A' Q  - I_{k+1}\right).
\end{eqnarray*}
(Note that  $\det(XY - I_k) =  \det(YX - I_k)$ for any  $k \times k$ matrices $X$, $Y$  over a commutative ring.)

Consider the matrices
$$M_1 = F_1 \left(B' P^{-1} A'P  - I_{k+1}\right) E   \quad \text{   and   } \quad
M_2 =  F_2^{-1} \left(B' Q A' Q - I_{k+1}\right) F_2.$$
Since $\det(F_1)=t$ and $\det(E)=t^{-1}$, we have $\det(M_1) = H_\be$.   Also, $\det(M_2) = H_{\be'}$.
As in the case of the right exchange move, one can show that $M_1 = M_2$, and 
this implies that $H_{\be_1}(s,t,q) = \det(M_1) = \det(M_2) = H_{\be_2}(s,t,q)$.
\end{proof}

%%%%%%%%%%%%%%%%%%%

We apply these results to define a preferred normalization for the virtual Alexander polynomial $H_K(s,t,q)$ as follows.
Let $K$ be a virtual knot or link  represented as the closure of a braid $\be \in \VB_n,$ and let $\writhe(\be)$ be the writhe  and $v(\be)$ the virtual crossing number of closure $\wh \be$. Note that $v(\be)$ depends on the braid word for $\be$, whereas $\writhe(\be)$ is  invariant under the virtual braid relations \eqref{classical-rel}, \eqref{virtual-rel}, and \eqref{mixed-rel}. In the definition below, we only need to correct by $(-1)^{\writhe(\be)+v(\be)}$, which can be easily calculated from the braid word for $\be$ and which depends only on $\be \in \VB_n$ and not on the braid word.
 
\begin{definition} \label{Defnorm}
The {\it normalized  virtual Alexander polynomial} is given by setting $${\wh H}_K(s,t,q) = (-1)^{\writhe(\be) + v(\be)} H_\be(s,t,q).$$
It is an invariant of virtual knots and links that is well-defined up to a factor of $(st)^i.$
\end{definition}

Invariance follows from Theorem \ref{virtualMarkov}, Proposition \ref{VM2-inv} and Proposition \ref{VM3-inv}, which show that $(-1)^{\writhe(\be) + v(\be)} H_\be(s,t,q)$
is independent of braid representative, up to an overall factor of $(st)^i.$

Notice that $(-1)^{\writhe(\be) + v(\be)} = (-1)^{|\be|},$ where $|\be|$ denotes the length of $\be$ as a word in $\si_i, \si_i^{-1}, \tau_i$.
As with $H_K(s,t,q),$ we will suppress the inherent indeterminacy of $\wh H_K(s,t,q)$, namely we write ${\wh H}_K(s,t,q) = f$ whenever $f \in \cL$ is a Laurent polynomial such that ${\wh H}_K(s,t,q) = (st)^i \cdot f$ for some $i \in \ZZ.$

The next result is proved in essentially the same way as Theorem \ref{theorem-H-K}, and details are left to the industrious reader.

\begin{proposition} If $K$ is a virtual knot or link, then ${\wh H}_K(s,t,q) = {\wh H}_K(sq^{-1}, tq, 1).$
\end{proposition}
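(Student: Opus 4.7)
The plan is to adapt the proof of Theorem~\ref{theorem-H-K} essentially verbatim, taking care only to track the sign correction factor through the substitution. Pick a braid representative $\be \in \VB_k$ with $\wh\be = K$, and write $\be = \th_1 \cdots \th_m$ as a word in the standard generators $\th_j \in \{\si_i^{\pm 1}, \tau_i \mid 1 \leq i \leq k-1\}$. Recall from Definition~\ref{Defnorm} that
$${\wh H}_K(s,t,q) = (-1)^{\writhe(\be) + v(\be)}\, H_\be(s,t,q) = (-1)^{\writhe(\be) + v(\be)}\,\det\!\left(\Psi(\th_1)\cdots\Psi(\th_m) - I_k\right),$$
defined up to a factor of $(st)^i$.

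Next, reuse the diagonal matrix $\La = \diag(1, q, q^2, \ldots, q^{k-1})$ from the proof of Theorem~\ref{theorem-H-K}. The two key calculations established there,
$\left(\La \Psi(\si_i^{\pm 1})\La^{-1}\right)(s,t,q) = \Psi(\si_i^{\pm 1})(sq^{-1}, tq, 1)$ and $\left(\La \Psi(\tau_i)\La^{-1}\right)(s,t,q) = \Psi(\tau_i)(sq^{-1}, tq, 1)$, apply without modification. Multiplying through the product, inserting $\La^{-1}\La$ between consecutive factors, and using that conjugation commutes with subtracting $I_k$ yields
$$\La\bigl(\Psi(\th_1)\cdots \Psi(\th_m) - I_k\bigr)\La^{-1}(s,t,q) \;=\; \bigl(\Psi(\th_1)\cdots \Psi(\th_m) - I_k\bigr)(sq^{-1}, tq, 1).$$
Taking determinants gives $H_\be(s,t,q) = H_\be(sq^{-1}, tq, 1)$ on the nose.

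Finally, observe that the sign $(-1)^{\writhe(\be) + v(\be)}$ depends only on the combinatorics of the braid word for $\be$ and is independent of the variables $s, t, q$; hence it is unaffected by the substitution $(s,t,q) \mapsto (sq^{-1}, tq, 1)$. Multiplying the above identity by this sign gives ${\wh H}_K(s,t,q) = {\wh H}_K(sq^{-1}, tq, 1)$. The only subtlety, and the step worth checking carefully, is compatibility with the $(st)^i$ indeterminacy of $\wh H_K$: under the substitution $(s,t,q) \mapsto (sq^{-1}, tq, 1)$, the monomial $st$ is mapped to $sq^{-1}\cdot tq = st$, so the indeterminacy class is preserved, and the stated equality is well-defined. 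There is no serious obstacle; the argument is a formal consequence of the proof of Theorem~\ref{theorem-H-K} together with the observation that the normalizing sign is $(s,t,q)$-independent.
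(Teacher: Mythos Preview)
Your proof is correct and follows exactly the approach the paper intends: the paper itself states only that the result ``is proved in essentially the same way as Theorem~\ref{theorem-H-K}, and details are left to the industrious reader.'' You have supplied precisely those details, including the two extra observations (that the sign $(-1)^{\writhe(\be)+v(\be)}$ is variable-independent and that $st\mapsto st$ under the substitution, so the $(st)^i$ indeterminacy is compatible) that distinguish the normalized case from the unnormalized one.
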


Using the normalized virtual Alexander polynomial, we obtain the following improvement of Theorem \ref{v-bound}. We define $\deg_q$ and $\deg_{q^{-1}}$ for the Laurent polynomial ${\wh H}_K(s,t,q)$ by regarding it as a polynomial in $q$ and $q^{-1}.$
 
\begin{theorem} \label{better-v-bound}
Given a virtual knot or link $K$,  then  
$$\deg_{q^{-1}} {\wh H}_K(s,t,q)~\leq~v(K) \quad \text{ and } \quad \deg_q {\wh H}_K(s,t,q)~\leq~v(K).$$  
\end{theorem}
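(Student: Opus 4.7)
The plan is to combine the $q$-width bound from Theorem~\ref{v-bound} with a $q\leftrightarrow q^{-1}$ symmetry of $\wh H_K$ that forces its $q$-range to be centered at~$0$, so that each of $\deg_q$ and $\deg_{q^{-1}}$ is at most half the $q$-width. Specifically, I plan to establish the identity
\[
\wh H_K(s,t,q) \;=\; \wh H_K(t,s,q^{-1})
\]
modulo the $(st)^i$-ambiguity. Writing $\wh H_K = \sum a_{ijk}\,s^i t^j q^k$, this identity forces the exponent triples with nonzero coefficient to be closed under $(i,j,k)\mapsto (j-n,\,i-n,\,-k)$ for some fixed $n$, so the set of $q$-exponents occurring is symmetric about~$0$ and $\deg_q\wh H_K = \deg_{q^{-1}}\wh H_K$. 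Combined with Theorem~\ref{v-bound},
\[
2\deg_q\wh H_K \;=\; q\text{-}\width\,\wh H_K \;\leq\; 2v(K),
\]
which gives the theorem.

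To prove the symmetry, I would work at the level of the virtual Burau representation. A direct comparison of the matrices in~\eqref{Burau-matrix} shows that for each braid generator $a\in\{\sigma_i^{\pm 1},\tau_i\}$ one has $\Psi(a)(t,s,q^{-1}) = \Psi(a)(s,t,q)^{T}$. Hence for $\beta = a_1\cdots a_m \in \VB_k$,
\[
\Psi(\beta)(t,s,q^{-1}) \;=\; \Psi(a_1)^T \cdots \Psi(a_m)^T \;=\; \bigl(\Psi(a_m)\cdots\Psi(a_1)\bigr)^T \;=\; \Psi(\beta^R)^T,
\]
where $\beta^R = a_m\cdots a_1$ is the reverse word. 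Using $\det M = \det M^T$ we get $H_\beta(t,s,q^{-1}) = H_{\beta^R}(s,t,q)$; since $\writhe(\beta^R)=\writhe(\beta)$ and $v(\beta^R)=v(\beta)$, the same identity lifts to the normalized polynomial. Geometrically $\wh{\beta^R}$ is obtained from $\wh\beta$ by a $180^\circ$ rotation of the braid diagram about a horizontal axis, so both represent the same virtual knot $K$, and therefore $\wh H_\beta = \wh H_{\beta^R}$ modulo $(st)^i$. Combining these two identities yields $\wh H_K(s,t,q) = \wh H_K(t,s,q^{-1})$ modulo $(st)^i$.

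The step I expect to be the main obstacle is the last geometric identification: the $180^\circ$ rotation reverses the orientation of every strand, so strictly speaking $\wh{\beta^R}$ represents the orientation-reversal $K^{-1}$ rather than $K$ itself. One must therefore check that $\wh H_K$ is invariant under orientation reversal, which is plausible because the Alexander module is built from the abelianization to $\ZZ^3$ and reversing orientation corresponds to a self-symmetry of $\ZZ^3$; but the details need care. An alternative route that bypasses this subtlety would be to establish the transpose symmetry intrinsically on $\VG_K$: exhibit an involution of $\VG_K$ that interchanges the abelianization characters $s\leftrightarrow t$ and inverts $q$, and then show that it induces the stated identity on Alexander invariants.
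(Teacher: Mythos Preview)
Your approach cannot work: the symmetry $\wh H_K(s,t,q) = \wh H_K(t,s,q^{-1})$ modulo $(st)^i$ that you are aiming for is \emph{false} in general. Look at the paper's own example $K=4.42$, where
\[
\wh H_K(s,t,q)=1-s^{-1}t^{-1}+(s^{-2}t^{-1}-s^{-1})q+(s^{-1}t-t^2)q^2+(s^{-1}t^2-s^{-2}t)q^3.
\]
Here $\deg_q\wh H_K=3$ while $\deg_{q^{-1}}\wh H_K=0$; since multiplication by $(st)^i$ does not shift $q$--degrees, no power of $st$ can make this polynomial symmetric under $q\mapsto q^{-1}$. So your conclusion $\deg_q\wh H_K=\deg_{q^{-1}}\wh H_K$ already fails, and with it the whole strategy of halving the $q$--width.

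The break occurs precisely at the step you flagged. Your transpose identity $H_\beta(t,s,q^{-1})=H_{\beta^R}(s,t,q)$ is correct, but $\wh{\beta^R}$ is not $\wh\beta$: for classical braids one has the $3$--dimensional flype that identifies them, but in the virtual setting no such move is available, and in general $\wh{\beta^R}$ represents the orientation reverse $K^{-1}$. The normalized polynomial $\wh H_K$ is \emph{not} invariant under orientation reversal of virtual knots, so the chain $H_\beta\to H_{\beta^R}\to H_\beta$ cannot be closed.

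The paper's intended argument (left implicit) is direct and avoids any symmetry. One first represents $K$ as the closure of a braid $\beta$ with $v(\beta)=v(K)$; Kamada's braiding procedure does this without introducing new virtual crossings. The Wirtinger--like presentation of the closure diagram $\wh\beta$ is related to the braid presentation by \textbf{S4} moves only (see the proof of Theorem~\ref{thm:twisted-Burau}), and \textbf{S4} moves preserve $\det A$ \emph{exactly}, so $H_\beta$ equals on the nose the determinant of the Wirtinger Alexander matrix of $\wh\beta$. In that matrix each virtual crossing contributes one row whose only $q$--dependence is a single $q$ and one row whose only $q$--dependence is a single $q^{-1}$; all other rows are $q$--free. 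Expanding the determinant row by row then gives $\deg_q H_\beta\le v(\beta)$ and $\deg_{q^{-1}}H_\beta\le v(\beta)$ separately, and since $\wh H_K=(-1)^{|\beta|}H_\beta$ up to $(st)^i$, the theorem follows.
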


We present two examples where one obtains sharp bounds on the virtual crossing numbers from the normalized Alexander polynomial. In both cases, these bounds are better than those obtained by the arrow polynomial or the unnormalized Alexander polynomial.

\begin{figure}[ht]
\centering
\includegraphics[scale=1.90]{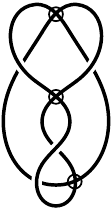}\qquad \qquad \qquad \qquad
\includegraphics[scale=0.45]{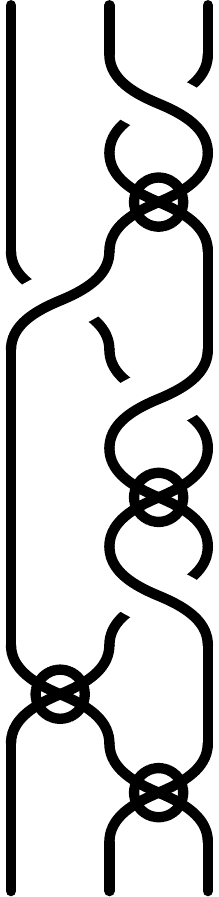}
\caption{The virtual knot $4.42$ has $v(K)=3$ and is the closure of the virtual braid  $\be = \si_2 \tau_2 \si_1^{-1} \si_2^{-1} \tau_2 \si_2 \tau_1 \tau_2$.}
\label{4-42}
\end{figure}

\begin{example}
Let $K$ be the virtual knot 4.42 depicted in Figure \ref{4-42}. Using the arrow polynomial, it has been shown that $v(K) \geq 2$, see \cite{BDK}. On the other hand, using the fact that $K$ is the closure of the braid $\be = \si_2 \tau_2 \si_1^{-1} \si_2^{-1} \tau_2 \si_2 \tau_1 \tau_2 \in \VB_3$, one can  compute  the normalized Alexander polynomial 
$${\wh H}_K(s,t,q) =  1 - s^{-1} t^{-1} + (s^{-2} t^{-1}- s^{-1})q^{1} + (s^{-1} t -t^2) q^{2}  + (s^{-1}t^2   - s^{-2} t)q^{3} .$$
Since $\deg_{q} {\wh H}_K(s,t,q) =3,$ Theorem \ref{better-v-bound} implies that $v(K)=3.$
\end{example}

\begin{figure}[ht]
\centering
\includegraphics[scale=1.70]{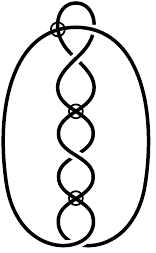}\qquad \qquad \qquad \qquad \includegraphics[scale=0.45]{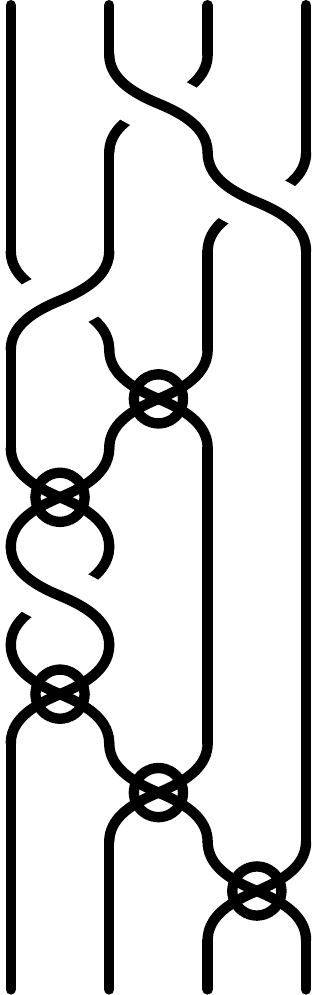}
\caption{The virtual knot $4.45$ has $v(K)=3$ and is the closure of the virtual braid $\be = \si_2 \si_3 \si_1^{-1} \tau_2 \tau_1 \si_1 \tau_1 \tau_2 \tau_3$.}
\label{4-45}
\end{figure}

\begin{example}
Let $K$ be the virtual knot 4.45 depicted in Figure \ref{4-45}. Using the arrow polynomial, it has been shown that $v(K) \geq 1$, see \cite{BDK}. On the other hand, using the fact that  $K$ is the closure of the braid $\be = \si_2 \si_3 \si_1^{-1} \tau_2 \tau_1 \si_1 \tau_1 \tau_2 \tau_3 \in \VB_4$, one can   compute the normalized Alexander polynomial 
$${\wh H}_K(s,t,q) = (s^2 t-s^3 t^2 + t^{-1} - s) q^{-1} + s^3 t^3 - s^2 t^2 - s^{-1} t^{-1}+1 + (s t^2-t) q^{1} + (s^{-1} t^2-t^3) q^{3}.$$
Since $\deg_{q} {\wh H}_K(s,t,q) =3,$ Theorem \ref{better-v-bound} implies that $v(K)=3.$
\end{example}

%%%%%%%%%%%%%%%%%%%%%%%%%%%%%%%%%%%%%%%%%%%%%%%%%%%%%%%%%
\section{A skein formula for the virtual Alexander polynomial} \label{section-6}
In \cite{Sawollek} and \cite{KR}, a skein formula for the generalized Alexander polynomial $G_K(s,t)$ is established, and the goal of this section is to establish a skein formula for the virtual Alexander polynomial $H_K(s,t,q)$. This result is similar to the one obtained by Sawollek in \cite{Sawollek}, and we provide an independent and elementary proof. Throughout this section, to avoid the indeterminacy inherent in $H_K$ and ${\wh H}_K,$ we will work with virtual knot and link diagrams.

To begin, we give a formula for expanding the determinant of an $n \times n$ matrix along the first two rows, following the well-known principle of multi-row Laplace expansion.

Suppose $A$ is an $n \times n$ matrix given by
$$A=\begin{bmatrix} 
a_1 & a_2 & \cdots & a_n \\
b_1 & b_2 &\cdots & b_n \\[0.3em]   
&&P \\[0.3em]
\end{bmatrix},$$
where $P =(p_{ij})$ is an $(n-2) \times n$ matrix.

By multi-row Laplace expansion along the first and second rows of $A$,
\begin{equation} \label{eq:star}
\det A = \sum_{i=1}^{n-1} \sum_{j=i+1}^n (-1)^{i+j-1} (a_ib_j-a_jb_i) \det(\wh{P}_{ij}). 
\end{equation}

In this formula, $\wh{P}_{ij}$ denotes the square $(n-2) \times (n-2)$ matrix obtained by removing the $i$-th and $j$-th columns from $P.$

Now suppose $K_+, K_-,$ and $K_0$ are three oriented virtual knot or link diagrams which are identical everywhere except near one crossing, where $L_+$ has a positive crossing, $K_-$ has a negative crossing, and  $K_0$ has the crossing removed, see Figure \ref{skein}. 

\begin{figure}[ht]
\centering
\includegraphics[scale=0.90]{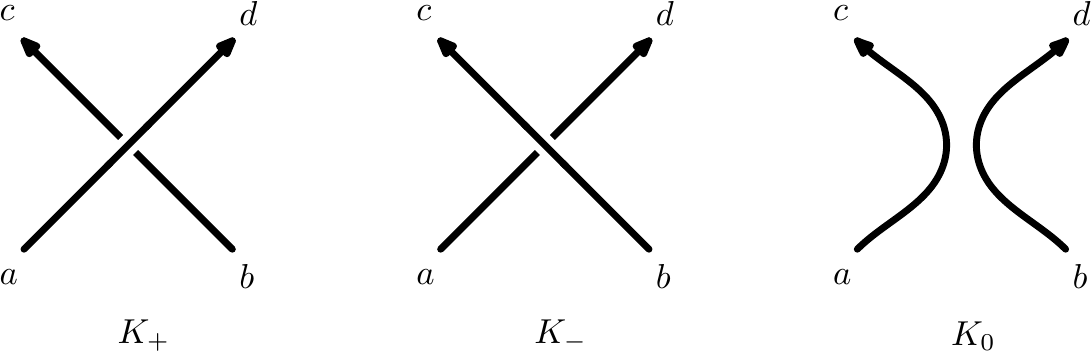}
\caption{The skein triple $K_+, K_-$ and $K_0.$}
\label{skein}
\end{figure}    

Label the short arcs of $K_+$ so that at the crossing in question, the arcs are as in Figure \ref{skein}, namely $a$ for the incoming over-crossing, $b$ for the incoming under-crossing, $c$ for the outgoing under-crossing and $d$ for the outgoing over-crossing and likewise for $K_-$ and $K_0$. 

Thus, 
\begin{eqnarray*} 
H_{K_+}(s,t,q) &=&
\det \begin{bmatrix} 1-st& t & -1 & 0 & 0 & \cdots  & 0 \\ 
s & 0 & 0 & -1  & 0 & \cdots & 0 \\[0.3em] 
& &&  P & & \\[0.3em]\end{bmatrix}, \\ \\
H_{K_-}(s,t,q)&=&
\det \begin{bmatrix}
0& s^{-1} & -1 & 0 & 0 & \cdots &  0 \\ 
t^{-1} & 1-(st)^{-1} & 0 & -1  & 0 & \cdots & 0 \\[0.3em] 
& & P & & \\[0.3em]\end{bmatrix}, \\ \\
H_{K_0}(s,t,q) &=&
\det \begin{bmatrix} 1 & 0 & -1 & 0 & 0 & \cdots & 0 \\ 
0 & 1 & 0 & -1  & 0 & \cdots & 0 \\[0.3em] 
& & & P & &  \\[0.3em] \end{bmatrix}. \\
\end{eqnarray*}

Applying formula \eqref{eq:star} to $H_{K_+}(s,t,q)$ and $H_{K_-}(s,t,q)$, we get that 
\begin{eqnarray*}
H_{K_+}(s,t,q) &=&-st \wh P_{12} - s\wh P_{13} -(1-st) \wh P_{14} +t\wh P_{24} + \wh P_{34}\\
H_{K_-}(s,t,q) & =& -(st)^{-1}\wh P_{12} -t^{-1}\wh P_{13}+(1-(st)^{-1})\wh P_{23} + s^{-1}\wh P_{24}  +\wh P_{34}
\end{eqnarray*}

Thus, 
\begin{eqnarray*}
 H_{K_+} - (st) H_{K_-} &=& \left( 1-st \right) \wh P_{12} + \left(st-1\right) \wh P_{14} +\left(1-st\right) \wh P_{23}+\left(1-st\right) \wh P_{34} \\
&=& \left(1-st\right) \left(\wh P_{12} - \wh P_{14} + \wh P_{23} + \wh P_{34}\right).
\end{eqnarray*}

On the other hand, expanding $H_{K_0}(s,t,q)$ along the first two rows reveals that
$$H_{K_0}(s,t,q)=\wh P_{12} - \wh P_{14}+ \wh P_{23}  + \wh P_{34},$$
and it follows that
$$H_{K_+} - (st) \; H_{K_-} = \left( 1-st \right) H_{K_0}.$$

Since the entire calculation can be performed on three braids $\be_+, \be_-, \be_0$ that are identical everywhere outside of one crossing, 
this argument applies equally well to the normalized invariant of Definition \ref{Defnorm}.
One only needs to note that the virtual crossing number and writhe satisfy 
$v(\be_+) = v(\be_-)=v(\be_0)$ and $\writhe(\be_+) = \writhe(\be_-)+2=\writhe(\be_0)+1$.
This causes a sign change in the coefficient of $\wh H_{\be_0}$, and the following theorem summarizes our discussion.  
\begin{theorem} \label{thm-skein}
Let $K_+, K_-,$ and $K_0$ be three oriented virtual knot or link diagrams obtained from the closures of braids $\be_+, \be_-,$ and $\be_0$, respectively, which are identical everywhere except near one crossing, where they are as pictured in Figure \ref{skein}. Then the normalized virtual Alexander polynomial satisfies the  skein formula
$$\wh H_{K_+}(s,t,q) - (st)\; \wh H_{K_-}(s,t,q) = \left(st - 1\right) \wh H_{K_0}(s,t,q).$$
Note that in the above formula, there is no indeterminacy in $\wh H_{K_+}, \wh H_{K_+},$ and $\wh H_{K_0}$  since they are all computed with respect to a given virtual knot or link diagram.

\end{theorem}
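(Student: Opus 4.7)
The plan is to work directly with the Alexander matrices coming from Wirtinger-style presentations based on short arcs, exploiting the fact that $K_+$, $K_-$, and $K_0$ differ only in a small neighborhood of a single classical crossing. First I would label the four short arcs participating in that crossing as $a,b,c,d$ (incoming over, incoming under, outgoing under, outgoing over) and arrange the two relations at that crossing so that they occupy the first two rows of the Alexander matrix, with columns $1,2,3,4$ indexing $a,b,c,d$. Since the rest of the diagram is identical for the three knots, the lower $(n-2)\times n$ block $P$ is common to all three matrices; only the top two rows depend on the crossing type, and those rows are read off directly from Figure~\ref{Relations} (with no $q$ appearing because the crossing is classical).

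Next I would apply multi-row Laplace expansion along the first two rows, using the identity
$$\det A = \sum_{i<j} (-1)^{i+j-1}(a_i b_j - a_j b_i)\det(\wh P_{ij}),$$
where $\wh P_{ij}$ denotes $P$ with its $i$-th and $j$-th columns removed. This expresses $H_{K_+}$, $H_{K_-}$, and $H_{K_0}$ as explicit $\ZZ[s^{\pm 1},t^{\pm 1}]$-linear combinations of the minors $\wh P_{ij}$. A short rearrangement then shows that the combination $H_{K_+} - (st)\,H_{K_-}$ collapses to $(1-st)\bigl(\wh P_{12} - \wh P_{14} + \wh P_{23} + \wh P_{34}\bigr)$, while expanding $H_{K_0}$ in the same way gives exactly $\wh P_{12} - \wh P_{14} + \wh P_{23} + \wh P_{34}$. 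This yields the unnormalized identity
$$H_{K_+}(s,t,q) - (st)\, H_{K_-}(s,t,q) = (1-st)\, H_{K_0}(s,t,q).$$

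Finally I would pass to the normalized version by tracking the sign factor $(-1)^{\writhe(\be)+v(\be)}$ from Definition~\ref{Defnorm}. Since the local change modifies only a classical crossing, the three braids $\be_+, \be_-, \be_0$ satisfy $v(\be_+)=v(\be_-)=v(\be_0)$, while $\writhe(\be_+) = \writhe(\be_-)+2 = \writhe(\be_0)+1$. Hence the overall sign for $\wh H_{K_+}$ and $\wh H_{K_-}$ is the same, whereas the sign for $\wh H_{K_0}$ differs by $-1$. Dividing the unnormalized formula by this common sign converts the coefficient $1-st$ into $st-1$ and produces the stated relation.

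The main obstacle I anticipate is purely bookkeeping: making sure the permutation of columns needed to place $a,b,c,d$ as the first four columns of the Alexander matrix is the same for all three diagrams, so that the resulting sign on $\det$ cancels on both sides of the identity, and checking that the Laplace coefficients for $K_\pm$ in the rows coming from Figure~\ref{Relations} are recorded correctly (it is easy to err on the signs of $s,t$ in the left-handed crossing relations). Once that bookkeeping is in place, the argument is an elementary determinant manipulation and the skein formula falls out directly.
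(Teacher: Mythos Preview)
Your proposal is correct and follows essentially the same route as the paper: set up the three Alexander matrices with common lower block $P$, expand along the top two rows via multi-row Laplace, and verify that $H_{K_+}-(st)H_{K_-}=(1-st)H_{K_0}$ by comparing the coefficients of the minors $\wh P_{ij}$, then adjust by the sign $(-1)^{\writhe(\be)+v(\be)}$ using exactly the writhe and virtual-crossing count relations you state. The bookkeeping concerns you flag are the only delicate points, and the paper handles them in the same elementary way.
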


%%%%%%%%%%%%%%%%%%%%%%%%%%%%%%%%%%%%%%%%%%%%%%%%%%%%%%%%% 

\section{Twisted Alexander invariants for virtual knots} \label{section-7}

In this section, we introduce twisted versions of virtual Alexander invariants for virtual knots and links.  We begin by recalling the definition of the twisted Alexander polynomial for finitely presented groups from \cite{Wada}.      

Given a group $\Ga$ with presentation 
$P= \langle \, a_1, \ldots, a_k \mid  r_1, \ldots, r_{\ell} \, \rangle$,
a surjection $\al \colon \Ga \to \ZZ^s$, and a representation $\varrho \colon \Ga \lto GL_n(R)$,
where $R$ is a unique factorization domain, we will construct 
the twisted Alexander polynomial $\Delta_{\Ga, \al}^{\varrho}(t_1, \ldots, t_s)$. 
Let $F_k$ denote the free group of rank $k$. 
The presentation $P$ determines a surjection $\phi \colon F_k \lto \Ga$, which induces a ring 
 homomorphism $\wt \phi \colon \ZZ F_k \lto \ZZ \Ga$ on the integral group-rings.  Likewise,
 the surjection $\al\colon \Ga \to \ZZ^s$ induces a ring homomorphism $ \wt \al \colon \ZZ \Ga \lto \ZZ[t^{\pm1}_1, \ldots, t^{\pm1}_s]$, the integral group-ring of $\ZZ^s$; and the representation $\varrho\colon \Ga \lto GL_n(R)$ induces a ring homomorphism $\wt \varrho \colon \ZZ \Ga \lto M_n(R)$, the algebra of $n \times n$ matrices over $R$.

Let $\cL_R= R[t^{\pm1}_1, \ldots, t^{\pm1}_s]$ be the ring of Laurent polynomials with coefficients in $R$.
The Jacobian  of the presentation $P$ is the $\ell \times k$ matrix $J=\left(\frac{\partial r_i}{\partial a_j}\right)$.
Let
$$
\Om = (\wt \varrho \, \otimes \, \wt \al) \circ \wt \phi \colon \ZZ F_k \lto M_n\left(\cL_R\right)  
$$
be the composition of $\wt \phi \colon \ZZ F_k \lto \ZZ \Ga$ with the  tensor product $\wt \varrho \, \otimes \, \wt \al \colon \ZZ \Ga \lto M_n\left(\cL_R\right)$.
The  twisted presentation matrix $M =\Om \left(\frac{\partial r_i}{\partial a_j}\right)$ is an $\ell \times k$ matrix with entries in $M_n(\cL_R)$, that is, a block matrix.

\begin{remark}\label{rho-tensor-alpha}
If $\sum m_i w_i \in \ZZ \Gamma$, where $m_i \in \ZZ$ and $w_i \in \Gamma$, then 
$$\wt \varrho \, \otimes \, \wt \al \left( \sum m_i w_i \right) = \sum m_i \, \varrho(w_i) \cdot \al(w_i)  $$
where $\al(w_i)$ acts by scalar multiplication on $\varrho(w_i)$.   
\end{remark}

Let $\wh{M}_j$ denote the matrix obtained by removing the $j$-th column from $M$, and we can regard $\wh{M}_j$ as an $n\ell \times n(k-1)$ matrix with entries in $\cL_R$.  If $\ell < k-1,$ then $\wh{M}_j$ has more columns than rows, and its determinant is zero.  If $\ell > k-1,$ then $\wh{M}_j$ has more rows than columns, and then we consider multi-indices $I$ of length $|I|=n(k-1)$ and use $\wh{M}_j^I$ to denote the square matrix consisting of the $I$ rows from $\wh{M}_j$.

The following result is essential to the construction of the twisted Alexander polynomials, and it is proved in \cite[Lemmas 2 and 3]{Wada}. 

\begin{lemma}\label{Nonzero}
\begin{enumerate}[\hspace{18pt}]
\item[(i)] For some $a_j$, $\det(\Om(1-a_j))$ is nonzero.
\item[(ii)] If $\det(\Om(1-a_j))$ and $\det(\Om(1-a_{k}))$ are both nonzero, then for any multi-index $I$ of length $|I| = n(k-1)$,
$$
\det({\wh M}_j^I)\det(\Om(1-a_{k})) = \pm \det({\wh M}_{k}^I)\det(\Om(1-a_j)).
$$  
The sign in the above formula is always positive if the representation $\varrho$ has even degree. 
\end{enumerate}
\end{lemma}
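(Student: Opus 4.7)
The plan is to use the fundamental Fox-calculus identity
\[
\sum_{j=1}^{k} \frac{\partial r_i}{\partial a_j}\,(a_j - 1) \;=\; r_i - 1 \qquad \text{in } \ZZ F_k,
\]
and then apply the ring map $\Om$. Since $\phi(r_i)=1$ in $\Ga$, we have $\Om(r_i)=I_n$, so the right-hand side vanishes and we obtain for each relation $r_i$ the block-matrix relation
\[
\sum_{j=1}^{k} \Om\!\left(\tfrac{\partial r_i}{\partial a_j}\right)\,\Om(a_j-1) \;=\; 0.
\]
This single relation is the common engine behind both parts.

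For (i), since $\al\colon \Ga \to \ZZ^s$ and $\phi\colon F_k\to\Ga$ are both surjective, the composite $\al\circ\phi$ is surjective, so some generator $a_j$ satisfies $\al\phi(a_j) = t_1^{m_1}\cdots t_s^{m_s}$ with not all $m_i$ zero. Setting $A=\varrho\phi(a_j)\in GL_n(R)$ and $m=t_1^{m_1}\cdots t_s^{m_s}\in \cL_R$, Remark \ref{rho-tensor-alpha} gives $\Om(a_j)=A\cdot m$, so
$\det(\Om(1-a_j))=\det(I_n - Am) = 1 - (\operatorname{tr} A)\,m + \cdots + (-1)^n\det(A)\,m^n$. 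Since $m$ is a nontrivial monomial, the monomials $1,m,m^2,\ldots,m^n$ are distinct in $\cL_R$, so this polynomial has constant term $1$ and is therefore nonzero.

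For (ii), let $X_j$ denote the block column whose $i$-th entry is $\Om(\partial r_i/\partial a_j)$, and set $W_j=\Om(a_j-1)=-\Om(1-a_j)$. Restricting the block relation above to rows $I$ yields $\sum_j X_j^I\, W_j = 0$. My target is the identity
\[
\det(\wh M_k^I)\,\det(W_j) \;=\; (-1)^{n(k-j)}\,\det(\wh M_j^I)\,\det(W_k),
\]
from which (ii) follows upon substituting $W_\ell=-\Om(1-a_\ell)$ and cancelling the matched factors of $(-1)^n$; the sign is $+1$ when $n$ is even. To prove the displayed identity I would manipulate $\wh M_k^I = [X_1^I,\ldots,X_{k-1}^I]$ as follows: first multiply the $j$-th block column on the right by $W_j$ (introducing a factor of $\det(W_j)$), then for each $i\in\{1,\ldots,k-1\}\setminus\{j\}$ add $X_i^I W_i$ to the $j$-th block column, so by the relation $\sum_i X_i^I W_i=0$ that column becomes $-X_k^I W_k$. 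Moving this column rightmost contributes a sign of $(-1)^{n(k-1-j)}$, because swapping two adjacent $n$-wide block columns requires $n^2$ scalar swaps and $(-1)^{n^2}=(-1)^n$. Factoring out $-W_k$ from the rightmost block column then contributes $(-1)^n\det(W_k)$, and assembling gives $\det(\wh M_j^I)\det(W_k)$ with total sign $(-1)^{n(k-j)}$. The main obstacle is the careful bookkeeping of these block-sign contributions, but the underlying principle is the familiar scalar fact that the signed minors of an $(n-1)\times n$ matrix span its kernel, transported to the matrix-valued setting via the block-column calculus above.
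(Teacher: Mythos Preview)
Your proof is correct and is essentially the standard argument from Wada's paper \cite[Lemmas 2 and 3]{Wada}, which the present paper cites rather than reproducing. Both parts hinge on exactly the Fox fundamental identity you use: for (i), surjectivity of $\al\circ\phi$ forces some $\al\phi(a_j)$ to be a nontrivial monomial, so $\det(I_n-\varrho\phi(a_j)\,m)$ has constant term $1$; for (ii), the block-column manipulations you describe---right-multiplying a block column by $W_j$, absorbing the other $X_i^I W_i$ terms via the identity, then reordering and factoring out $-W_k$---are precisely Wada's argument, and your sign bookkeeping $(-1)^{n(k-j)}$ is correct.
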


We consider the set of determinants $\{\det (\wh{M}_j^I) \mid |I|= n(k-1)\}$ taken over all possible indices $I$ and denote their greatest common divisor
by $Q_j(t_1, \ldots, t_s)$. (Note that because $R$ is a unique factorization domain, $\cL_R$ is too, hence $\cL_R$ is a gcd domain.) Thus $Q_j$ is well-defined up to a factor of $\ep t_1^{e_1} \cdots t_s^{e_s}$, where $\ep$ is a unit of $R$ and $e_1, \ldots, e_s\in \ZZ$.   
Lemma \ref{Nonzero} implies that the quotient
\begin{equation}\label{Deftwist}
\Delta_{\Ga, \al}^{\varrho}(t_1, \ldots, t_s) = \frac{Q_j(t_1, \ldots, t_s)}{\det(\Om(1-a_j))},
\end{equation}
is independent, up to possibly a sign, of the column $j$ chosen to remove from $M$, and Equation
\eqref{Deftwist} defines the twisted Alexander polynomial associated to the group $\Ga,$ presentation $P$, surjection $\al \colon \Ga \to \ZZ^s$ and representation $\varrho \colon \Ga\lto GL_n(R).$

Notice that $\Delta_{\Ga, \al}^{\varrho}(t_1, \ldots, t_s)$ is not generally a polynomial, but rather a quotient of two Laurent polynomials.  For virtual knots, we will show  that the twisted Alexander polynomial is a Laurent polynomial over $R$, see  Theorem \ref{tHispoly} below (see  \cite[Proposition 8]{Wada} for a similar but weaker result for classical knots, as well as  \cite[Theorem 3.1]{Kitano-Morifuji} by Kitano and Morifuji).

The definition \eqref{Deftwist} of the twisted Alexander polynomials requires a choice of presentation $P$ for the group $\Ga.$ 
The Tietze transformation theorem implies that any two presentations $P$ and $P'$ of $\Ga$ are related by a sequence of Tietze moves and their inverses:

\begin{itemize}
\item[{\bf T1:}] Add a consequence $r$ of the relators $r_i$.
\item[{\bf T2:}] Add a new generator $a$ and a new relator $aw^{-1}$, where $w$ is a word in the $a_j$.  
\end{itemize}

\noindent
Wada studies the effect of the Tietze transformations on the twisted Alexander polynomials,  and in \cite[Theorem 1]{Wada} he proves that $\Delta_{\Ga, \al}^{\varrho}(t_1, \ldots, t_s)$ is independent of the choice of presentation $P$ for $\Ga$, up to a factor of  $\ep t_1^{e_1} \cdots t_s^{e_s}$.

Given a virtual knot or link $K$, we define the twisted virtual Alexander polynomial by applying this construction to the virtual knot group $\VG_K$. We will show that the  twisted Alexander polynomials associated to representations of the virtual knot group $\VG_K$ enjoy some special properties by showing that the presentations of $\VG_K$ one gets from diagrams of $K$ are strongly Tietze equivalent. 
 
Recall that two presentations $P$ and $P'$ of a group are said to be \emph{strongly Tietze equivalent} if they are related by the following moves and their inverses:

\begin{itemize}
\item[{\bf S1:}] Replace one relator $r_i$ by its inverse $(r_i)^{-1}$.
\item[{\bf S2:}] Replace one relator $r_i$ by a conjugate $w r_i w^{-1}$.
\item[{\bf S3:}] Replace one relator $r_i$ by the product $r_i r_j$.
\item[{\bf S4:}]  Add a new generator $a$ and the relator $aw^{-1}$.
\end{itemize}
 
Note that {\bf S4} is the same as {\bf T2}.

Given a virtual knot diagram $D$ of $K$, one can construct a presentation $P_D$ of the virtual knot group $\VG_K$ as follows.  It has one meridional generator for each short arc and two relations for each classical and virtual crossing as in Figure \ref{Relations}. In addition, it has two auxilliary generators $s$ and $q$, along with the commutation relation $[s,q]=1.$ We call $P_D$ the\emph{Wirtinger-like} presentation associated with the diagram $D$ for $K$,
and the next result shows that any two such presentations are strongly Tietze equivalent.
The argument is similar to the proof of Lemma 6 in \cite{Wada}.

\begin{lemma}\label{WirtingertStrongTietze}
If $D$ and $D'$ are two diagrams for the virtual knot $K$, then the 
presentations $P_D$ and $P_{D'}$ of its virtual knot group $\VG_K$ are strongly Tietze equivalent.
\end{lemma}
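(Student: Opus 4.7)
The plan is to appeal to the generalized Reidemeister theorem and verify the statement one move at a time. Since $D$ and $D'$ represent the same virtual knot, they are connected by a finite sequence of planar isotopies and moves of type $(r1)$--$(r3)$ and $(v1)$--$(v4)$, so it suffices to show that if $D'$ is obtained from $D$ by a single such move then $P_D$ and $P_{D'}$ are strongly Tietze equivalent. Planar isotopy does not alter the combinatorial data (short arcs and crossings), so in that case $P_D = P_{D'}$ on the nose.

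For each of the seven moves I would work in a small disc containing the move, keeping track of the finitely many short arcs involved, and compare the generators and relations of $P_D$ and $P_{D'}$ arising from this disc (the commutator relation $[s,q]=1$ and everything outside the disc are identical in both presentations). In general a move replaces a block of short arcs and crossing relations by a different block; I would introduce the ``new'' short arcs one at a time via $\mathbf{S4}$, each time expressing a newly named arc as a word in the old generators (with $s$ and $q$) via the Wirtinger relation at one of the new crossings. Once the new generators have all been added by $\mathbf{S4}$, the remaining new relations are consequences of the old ones and of the defining equations from the $\mathbf{S4}$ steps, so they can be produced by $\mathbf{S1}$--$\mathbf{S3}$ (taking inverses, conjugating, and multiplying) and therefore can be added without leaving the strong Tietze equivalence class. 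Reversing this process, the old generators that no longer appear as short arcs in $D'$ can be removed by the inverse of $\mathbf{S4}$.

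Concretely, for $(r1)$ the added kink introduces two new short arcs and two relations of the form in Figure \ref{Relations}; one relation serves to define one new generator as a word in the other and in $s$, i.e.\ an $\mathbf{S4}$, and after conjugation ($\mathbf{S2}$) the second relation collapses to identifying the remaining new arc with the old one, another $\mathbf{S4}$ performed in reverse. The move $(r2)$ introduces two crossings of opposite sign, hence four relations and two new short arcs, and a similar analysis shows that after two $\mathbf{S4}$'s the remaining two relations are conjugate inverses of the defining ones, realizable by $\mathbf{S1}$ and $\mathbf{S2}$. The move $(r3)$ preserves the number of crossings and short arcs; the six relations on each side are related by the identity $\si_1 \si_2 \si_1 = \si_2 \si_1 \si_2$ in the Wirtinger calculus, which is the classical verification and amounts to repeated applications of $\mathbf{S3}$ and $\mathbf{S2}$. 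The virtual moves $(v1)$ and $(v2)$ are directly analogous to $(r1)$ and $(r2)$ but with the variable $q$ in place of $s$, while $(v3)$ is purely a permutation identity and simply reindexes arcs. Finally, $(v4)$ is the mixed move; here the verification uses the commutation relation $[s,q]=1$ exactly as in the diagrammatic check in Figure \ref{InvarianceOfTheMixedMove}, and once again the new arcs are added by $\mathbf{S4}$ and the redundant relations are produced by $\mathbf{S1}$--$\mathbf{S3}$.

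The main obstacle is the careful bookkeeping for $(r3)$ and $(v4)$, where three strands and several short arcs interact: the point is to choose the order in which the new arcs are introduced by $\mathbf{S4}$ so that the subsequent relations really can be seen as products and conjugates of the defining words, without inadvertently needing a move of type $\mathbf{T1}$ that is not expressible in terms of $\mathbf{S1}$--$\mathbf{S3}$. Once that ordering is fixed, the verification is essentially the standard Wirtinger computation (for the classical moves) combined with the commutativity of $s$ and $q$ (for $(v4)$), mirroring the strategy used by Wada in the proof of his Lemma~6.
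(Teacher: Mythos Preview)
Your proposal is correct and follows essentially the same approach as the paper: reduce to a single generalized Reidemeister move, work locally in a disc, introduce/remove short-arc generators via $\mathbf{S4}$, and manipulate the remaining relations with $\mathbf{S1}$--$\mathbf{S3}$. The paper's proof differs only in emphasis: it spells out explicitly, for the $(r1)$ case, the sequence of $\mathbf{S1}$--$\mathbf{S3}$ moves that substitutes the defining word for an extra generator into an arbitrary relator (invert to expose $x^{-1}$, conjugate, multiply by the $\mathbf{S4}$ relator, conjugate back), and then asserts that the remaining moves are handled similarly via $\mathbf{S4}$, whereas you give a broader sketch of all seven moves with less detail on this substitution step.
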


\begin{proof}
Let $D$ and $D'$ be two diagrams of $K$ that are identical outside of the small neighborhood indicated in the left of Figure \ref{Reidemeister1}.
Let $P$ and $P'$ be the corresponding presentations
$$
P = \langle a_1, \ldots a_{k},x, s,q \mid  r_1, \ldots, r_{k},xs^{-1}a_k^{-1}s, [s,q]  \rangle
$$ 
and  
$$
P' = \langle a_1, \ldots a_{k},s,q \mid  r_1', \ldots, r_{k}', [s,q]  \rangle.
$$
The relator $r_i'$ is obtained from $r_i$ by replacing occurrences of $x$ by $s^{-1}a_ks$.

We can transform $r_i$ to $r_i'$ using strong Tietze moves as follows. Suppose that $r_i$ contains an occurrence of $x$.  We invert $r_i$ so that it now contains an occurrence of $x^{-1}$.  Let $r_i = u x^{-1} v$ where $u$ and $v$ are words in the generators of $P$.  We make the following sequence of transformations using moves {\bf S2} and {\bf S3}
$$
r_i \leadsto vr_iv^{-1} \leadsto vr_iv^{-1}xs^{-1}a_k^{-1}s \leadsto v^{-1}vr_iv^{-1}xs^{-1}a_k^{-1}sv = us^{-1}a_k^{-1}sv .
$$ 

If $D$ and $D'$ are as indicated in the right of Figure \ref{Reidemeister1}, then we need to replace occurrences of $x$ by $q^{-1}a_kq$. This can be done using a sequence of moves similar to the above sequence.  

This shows that the first two generalized Reidemeister moves $(r1)$ and $(v1)$ can be realized by a sequence of strong Tietze moves.
The other generalized Reidemeister moves $(r2), (v2), (r3), (v3),$ and $(v4)$ of Figure \ref{VRM} can be similarly realized by the strong Tietze move {\bf S4}, and this completes the argument. 
\end{proof}

\begin{figure}[ht]
\centering
\def\svgwidth{14cm}
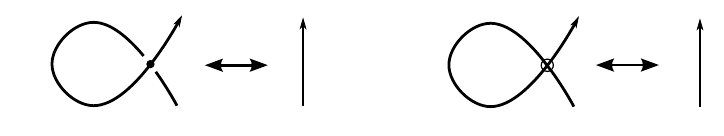
\caption{The first Reidemeister move.}
\label{Reidemeister1}
\end{figure}

Associated to any Wirtinger-like presentation 
$$P=\langle a_1,\ldots, a_k, s,q \mid r_1, \ldots, r_k, [s,q]\rangle$$
is a surjection $\al \colon \VG_K \to \ZZ^3$, where $\ZZ^3$ is generated by $\{s,t,q\}$.  The homomorphism $\al$ sends each meridional generator
$a_j$ to $t$ and the other generators $s,q$ identically to themselves. 
\begin{definition}\label{DefTwisted}
Let $K$ be a virtual knot or link and  $\varrho \colon \VG_K \lto GL_n(R)$ a representation of the virtual knot group. Then the {\it twisted virtual Alexander polynomial} is defined to be $H_K^{\varrho}(s,t,q) = \Delta_{\VG_K, \al}^{\varrho}(s,t,q)$. 
\end{definition}

Note that if $K$ is classical, then $H_K^{\varrho}(s,t,q) =0.$ Thus the twisted virtual Alexander polynomial is an obstruction to $K$ being classical.

Given a representation $\varrho \colon \VG_K \lto GL_n(R)$, we define the twisted Alexander matrix $A = \Om \left(\frac{\partial r_i}{\partial a_j}\right)$ to be the $k \times k$ block of the twisted presentation matrix $M$ associated to the above presentation $P$.

\begin{theorem} \label{tHispoly}
For a virtual knot or link $K$ and representation $\varrho \colon \VG_K \lto GL_n(R)$, the twisted virtual Alexander polynomial satisfies 
$H_K^{\varrho}(s,t,q) = \det(A)$.  Thus $H_K^{\varrho}(s,t,q)$ is a Laurent polynomial with coefficients in $R$.
\end{theorem}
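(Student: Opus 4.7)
The plan is to invoke Wada's formula \eqref{Deftwist} with the column indexed by $s$ removed, and then to exploit block upper-triangularity. For a Wirtinger-like presentation $\VG_K = \langle a_1, \ldots, a_k, s, q \mid r_1, \ldots, r_k, [s,q]\rangle$ the twisted presentation matrix has the $(k+1)\times(k+2)$ block form
\[
M = \begin{bmatrix} A & \Omega\!\left(\tfrac{\partial r}{\partial s}\right) & \Omega\!\left(\tfrac{\partial r}{\partial q}\right) \\[0.3em] \mathbf{0} & I_n - \varrho(q)\, q & \varrho(s)\, s - I_n \end{bmatrix}.
\]
Here the bottom row is obtained by Fox-differentiating $[s,q] = sqs^{-1}q^{-1}$ and using that $[s,q]=1$ in $\VG_K$ forces $sqs^{-1}=q$ in $\ZZ\VG_K$; this gives $\Omega(\partial[s,q]/\partial s) = \Omega(1 - sqs^{-1}) = I_n - \varrho(q)q$ and similarly $\Omega(\partial[s,q]/\partial q) = \varrho(s)s - I_n$. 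Note that $\varrho(s)\varrho(q) = \varrho(q)\varrho(s)$ holds automatically, since $\varrho$ is a group homomorphism and $[s,q]=1$ in $\VG_K$.

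Since the Wirtinger-like presentation has $\ell=k+1$ relations and $k+2$ generators, the matrix $\wh M_j$ is square of size $n(k+1) \times n(k+1)$, so $Q_j$ coincides up to units with $\det(\wh M_j)$. Removing the column indexed by $s$ yields
\[
\wh M_s = \begin{bmatrix} A & \Omega\!\left(\tfrac{\partial r}{\partial q}\right) \\[0.3em] \mathbf{0} & \varrho(s)\, s - I_n \end{bmatrix},
\]
which is block upper-triangular, hence $\det(\wh M_s) = \det(A)\cdot \det(\varrho(s)s - I_n) = (-1)^n \det(A)\cdot \det(\Omega(1-s))$. Substituting into \eqref{Deftwist} and cancelling the common factor gives
\[
H_K^\varrho(s,t,q) \;=\; \frac{\det(\wh M_s)}{\det(\Omega(1-s))} \;=\; (-1)^n \det(A),
\]
which equals $\det(A)$ up to the sign indeterminacy allowed in Wada's definition. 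Since $\det(A) \in \cL_R$, this also shows that $H_K^\varrho$ is a Laurent polynomial with coefficients in $R$, as claimed.

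The only technical point worth flagging, and the main item to verify before dividing, is that $\det(\Omega(1-s))$ is nonzero in $\cL_R$; Lemma \ref{Nonzero}(i) only guarantees nonvanishing for \emph{some} generator, not necessarily for $s$. This is straightforward: $\det(I_n - \varrho(s)\,s)$, viewed as a polynomial in the indeterminate $s$ over $R$, has constant term $\det(I_n) = 1$, and since $R$ is a domain so is $\cL_R$, forcing $\det(\Omega(1-s)) \ne 0$. With this observation in hand the choice $j=s$ in \eqref{Deftwist} is legitimate, and no further obstacles arise.
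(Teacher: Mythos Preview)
Your proof is correct and follows essentially the same approach as the paper: both exploit block upper-triangularity of the square matrix obtained by deleting one of the two auxiliary columns from the twisted presentation matrix, then cancel the resulting factor against the denominator in Wada's formula. The only difference is that the paper removes the $q$-column (so the surviving bottom-right block is $\Omega(1-q)$ on the nose and no sign appears), whereas you remove the $s$-column and pick up a harmless $(-1)^n$; your explicit verification that $\det(\Omega(1-s))\neq 0$ via the constant-term argument is a nice addition that the paper simply asserts for $q$.
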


\begin{proof}
Using the above presentation $P$ of  
the virtual knot group  $\VG_K$, we obtain the twisted presentation matrix
$$
M = \begin{bmatrix} A & \Om \left( \frac{\partial r}{\partial s} \right) & \Om \left(\frac{\partial r}{\partial q}\right) \\ {\bf 0} & \Om(1-q) & \Om(s-1) \end{bmatrix}.
$$
Clearly $\det(\Om(1-q))$ is non-zero, and since ${\wh M}_{n+2}$ is square, equation \eqref{Deftwist} implies that
\begin{equation} \label{eqnone}
\Delta_{\VG_K, \al}^{\varrho}(s,t,q) = \frac{\det({\wh M}_{n+2})}{\det(\Om(1-q))}.
\end{equation}
Since ${\wh M}_{n+2}$ is block upper triangular, we have $\det({\wh M}_{n+2}) = \det(A)  \det(\Om(1-q))$. Now equation \eqref{eqnone} implies that $\Delta_{\VG_K, \al}^{\varrho}(s,t,q) = \det(A)$, and the statement of the theorem follows.
\end{proof}

We say that two representations $\varrho_1, \varrho_2 \colon \VG_K \to GL_n(R)$ are conjugate if there exists $P \in GL_n(R)$ such that $\varrho_1(g)= P \varrho_2(g) P^{-1}$
 for all $g \in \VG_K$. It is straightforward to verify the following elementary result. 
 
\begin{lemma}   If $\varrho_1\colon \VG_K \to GL_n(R)$ and $\varrho_2\colon \VG_K \to GL_n(R)$ are conjugate representations, then
$H_K^{\varrho_1}(s,t,q) =H_K^{\varrho_2}(s,t,q).$
 \end{lemma}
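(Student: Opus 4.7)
The plan is to reduce the claim to the invariance of determinants under block conjugation, using the formula $H_K^{\varrho}(s,t,q) = \det(A)$ established in Theorem \ref{tHispoly}.

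First, I would fix a Wirtinger-like presentation
$$P= \langle a_1, \ldots, a_k, s, q \mid r_1, \ldots, r_k, [s,q] \rangle$$
of $\VG_K$ and compute the twisted Alexander matrices $A_1, A_2$ for $\varrho_1$ and $\varrho_2$ using this common presentation. By Theorem \ref{tHispoly}, it suffices to show that $\det(A_1) = \det(A_2)$.

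Next, I would translate the conjugacy $\varrho_1(g) = P \varrho_2(g) P^{-1}$ into a statement about the maps $\Omega_1, \Omega_2 \colon \ZZ F_{k+2} \to M_n(\cL_R)$. Using Remark \ref{rho-tensor-alpha}, for any element $\sum m_i w_i \in \ZZ \VG_K$ we have
$$ \wt{\varrho}_1 \otimes \wt{\al} \Bigl(\sum m_i w_i\Bigr) = \sum m_i\, \varrho_1(w_i) \cdot \al(w_i) = \sum m_i\, P\,\varrho_2(w_i) P^{-1} \cdot \al(w_i) = P \, (\wt{\varrho}_2 \otimes \wt{\al})\Bigl(\sum m_i w_i\Bigr) P^{-1},$$
since $\al(w_i)$ acts as scalar multiplication and commutes with conjugation by $P$. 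Precomposing with $\wt\phi$ shows that $\Omega_1(x) = P\, \Omega_2(x)\, P^{-1}$ for every $x \in \ZZ F_{k+2}$, and in particular for every Fox derivative $\tfrac{\partial r_i}{\partial a_j}$.

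Consequently, each $n \times n$ block of $A_1$ equals the conjugate by $P$ of the corresponding block of $A_2$. Viewing $A_1, A_2$ as $nk \times nk$ matrices over $\cL_R$, this is exactly the statement
$$ A_1 = (I_k \otimes P) \, A_2 \, (I_k \otimes P)^{-1},$$
where $I_k \otimes P$ is the block-diagonal matrix with $k$ copies of $P$ on the diagonal. Taking determinants gives $\det(A_1) = \det(A_2)$, and hence $H_K^{\varrho_1}(s,t,q) = H_K^{\varrho_2}(s,t,q)$, as required. There is no serious obstacle here; the only point requiring care is the bookkeeping that conjugation of each block by a single matrix $P$ is precisely conjugation of the whole block matrix by $I_k \otimes P$.
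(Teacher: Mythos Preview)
Your proof is correct and is exactly the straightforward verification the paper has in mind; the paper gives no proof beyond the remark that the result is elementary, and your argument via $A_1 = (I_k \otimes P)\,A_2\,(I_k \otimes P)^{-1}$ is the natural way to fill in the details. One small cosmetic point: you use the symbol $P$ both for the Wirtinger-like presentation and for the conjugating matrix in $GL_n(R)$, so you may want to rename one of them to avoid confusion.
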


In \cite[Theorem 2]{Wada}, Wada shows that the twisted Alexander polynomials of classical knots have less indeterminacy than the invariants for arbitrary groups. The next theorem proves the same result for virtual knots, and we include a proof for the convenience of the reader. 

\begin{theorem} \label{twisted-def}
Let $\varrho\colon \VG_K \lto GL_n(R)$ be a representation. As an invariant of the oriented virtual link type of $K$, the twisted virtual Alexander polynomial  
$H_K^{\varrho}(s,t,q)$
is well-defined up to a factor of $\ep (s^{e_1}t^{e_2}q^{e_3})^n$, where $\ep$ is a unit of $R$ and $e_1, e_2, e_3 \in \ZZ$.

If $\varrho\colon \VG_K \lto SL_n(R)$ is unimodular, then the twisted virtual Alexander polynomial $H_K^{\varrho}(s,t,q)$ is well-defined up to a factor of  $\pm (s^{e_1}t^{e_2}q^{e_3})^n$ if $n$ is odd and 
$(s^{e_1}t^{e_2}q^{e_3})^n$ if $n$ is even. 

\end{theorem}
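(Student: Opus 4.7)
The plan is to combine Lemma \ref{WirtingertStrongTietze}, which says that any two Wirtinger-like presentations of $\VG_K$ are strongly Tietze equivalent, with an analysis of how the twisted Alexander polynomial transforms under each of the four strong Tietze moves. The goal is to show that each of S1--S4 changes $\Delta_{\VG_K, \al}^\varrho$ by at most a factor of the form $\ep (s^{e_1}t^{e_2}q^{e_3})^n$ where $\ep \in R^*$, and then to refine this to $\pm(s^{e_1}t^{e_2}q^{e_3})^n$ when $\varrho$ is unimodular.

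First I would use the Fox derivative calculus together with the crucial fact that $\Om(r) = I_n$ whenever $r$ is a relator in $\VG_K$, since $\Om = (\wt\varrho\otimes\wt\al)\circ\wt\phi$ factors through $\ZZ\VG_K$. For S1, the identity $\partial r_i^{-1}/\partial a_j = -r_i^{-1}\,\partial r_i/\partial a_j$ shows that the $i$-th block-row of the twisted presentation matrix is multiplied by $-I_n$, scaling every maximal minor of $\wh M_j$ by $(-1)^n$. For S2 with conjugating element $w$, the Fox product rule together with $\partial w^{-1}/\partial a_j = -w^{-1}\partial w/\partial a_j$ and $\Om(wr_iw^{-1}) = I_n$ shows that the $i$-th block-row is multiplied by $\Om(w)$, contributing $\det(\Om(w)) = \wt\al(w)^n\det(\varrho(w))$; here $\wt\al(w)^n$ is a monomial raised to the $n$-th power and $\det(\varrho(w))\in R^*$. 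For S3, the identity $\Om(r_j) = I_n$ turns the substitution $r_i \mapsto r_ir_j$ into an elementary block-row operation, which preserves maximal minors. For S4, where we adjoin a new generator $a$ and relator $aw^{-1}$ with $w$ in the old generators, the old relators do not involve $a$ so the new column of the twisted presentation matrix has a single nonzero block $\Om(\partial(aw^{-1})/\partial a) = I_n$ in the new row; the resulting block-triangular structure shows that the ratio $Q_j/\det(\Om(1-a_j))$ is unchanged.

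Accumulating these contributions along any sequence of strong Tietze moves yields overall indeterminacy of the form $\ep(s^{e_1}t^{e_2}q^{e_3})^n$ with $\ep\in R^*$, which proves the first statement. For the second statement, unimodularity of $\varrho$ gives $\det(\varrho(w)) = 1$ for every $w$, so S2 contributes only the monomial factor raised to the $n$-th power; the only source of a non-monomial contribution is then S1, and each application multiplies by $(-1)^n$. When $n$ is even this equals $+1$, so $\ep = 1$; when $n$ is odd each S1 move contributes a sign, so $\ep = \pm 1$.

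The main obstacle I anticipate is handling the intermediate presentations in the strong Tietze sequence, which need not themselves be Wirtinger-like in the sense of Theorem \ref{tHispoly}, so one cannot simply track $\det(A)$; instead one must track the Wada ratio $Q_j/\det(\Om(1-a_j))$, and verify using Lemma \ref{Nonzero} that the compatibility between different column choices is respected by each of S1--S4. Handling this bookkeeping carefully, especially the interaction between S4 (which changes the number of rows and columns) and the GCD defining $Q_j$, is the technical core of the argument.
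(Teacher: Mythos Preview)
Your proposal is correct and follows essentially the same approach as the paper: reduce to strong Tietze equivalence via Lemma~\ref{WirtingertStrongTietze}, then compute the effect of each of S1--S4 on the twisted presentation matrix using the Fox calculus identities you list, together with $\Om(r_i)=I_n$.

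One remark: the obstacle you anticipate is not actually present. The paper tracks $\det(A)$ directly for S1--S3, and this is legitimate because those moves preserve the structural shape of the presentation (same number of non-auxiliary generators as relations, plus $s,q$ and $[s,q]$), so the block-triangular argument of Theorem~\ref{tHispoly} still applies at every intermediate stage. Only S4 changes the shape, and there the paper simply invokes Wada's general result, which is equivalent to the block-triangular computation you outline. So you may, if you wish, dispense with the Wada ratio bookkeeping and work with $\det(A)$ throughout; the proof then becomes a line or two shorter.
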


\begin{proof}
The oriented virtual knot type of $K$ consists of the set of oriented virtual knot diagrams of $K$ modulo the generalized Reidemesiter moves, see Figure \ref{VRM}.  Each knot diagram of $K$ is related by a sequence of strong Tietze moves as was shown in Lemma \ref{WirtingertStrongTietze}.

Let $K$ have the following Wirtinger-like presentation of its virtual knot group
$\VG_K =\langle a_1, \ldots a_k, s,q \mid  r_1, \ldots, r_k, [s,q]  \rangle$ and let 
$$
M = \begin{bmatrix} A & \Om \left( \frac{\partial r}{\partial s} \right) & \Om \left(\frac{\partial r}{\partial q}\right) \\ {\bf 0} & \Om(1-q) & \Om(s-1) \end{bmatrix}
$$ 
be the associated twisted presentation matrix.

Under the strong Tietze move {\bf S1}, we replace a relator $r_i$ by its inverse $r_i^{-1}$.  The corresponding twisted presentation matrix $M'$ 
is then obtained from $M$ by replacing the matrix row $\Om\left(\frac{\partial r_i}{\partial a_j}\right)$ with $\Om\left(-\frac{\partial r_i}{\partial a_j}\right)$.  If $A'$ is the twisted Alexander matrix of $M'$, then $\det (A') = (\pm 1)^n \det(A)$.  

Under the move {\bf S2}, we replace a relator $r_i$ by a conjugate $w r_i w^{-1}$. Since
$\frac{\partial w^{-1}}{\partial a_j} = - w^{-1} \frac{\partial w}{\partial a_j},$ we see
that
\begin{equation*}
\begin{split}
\frac{\partial(w r_i w^{-1})}{\partial a_j} &=\frac{\partial w}{\partial a_j} + w \frac{\partial r_i}{\partial a_j} + w r_i \frac{\partial w^{-1}}{\partial a_j} \\
&=  (1-wr_i w^{-1})\frac{\partial w}{\partial a_j} + w \frac{\partial r_i}{\partial a_j}.
\end{split}
\end{equation*}
Applying $\wt \varrho \otimes \wt \al$ and using the fact that $\varrho(w r_i w^{-1}) = I_n$, it follows that the twisted Alexander matrix $M'$ is obtained from $M$ by replacing the matrix row $\Om\left(\frac{\partial r_i}{\partial a_j}\right)$ by $\Om(w) \Om\left(\frac{\partial r_i}{\partial a_j}\right)$.    We  compute that 
$$
\det(\Om(w)) = \det(\varrho (w)) \det(\al(w))^n = \ep (s^{e_1}t^{e_2}q^{e_3})^n. 
$$
If $\varrho$ is unimodular, then $\det(\varrho (w))= 1.$  

Under the  move {\bf S3}, we replace a relator $r_i$ by the product $r_i r_k$.  The twisted presentation matrix  $M'$ is then obtained from $M$ by replacing the matrix row $\Om \left(\frac{\partial r_i}{\partial a_j}\right)$ by $\Om\left(\frac{\partial r_i}{\partial a_j}\right) + \Om(r_i)\Om\left(\frac{\partial r_k}{\partial a_j}\right)$.  Since each row of $A'$ is a linear combination of the rows of $A$, we have that $\det(A')= \det(A)$.

Notice that the last strong Tietze move {\bf S4} coincides with the Tietze move {\bf T2}.  In proof  of \cite[Theorem 1]{Wada}, Wada shows that the twisted Alexander polynomial of a finitely presentable group $\Ga$ is independent of second Tietze move {\bf T2}. More precisely, if $P'$ is obtained from $P$ by the move {\bf T2}, and if $M$ and $M'$ are the respective twisted presentation matrices, then Wada shows that $\det(M_j^I)=\det(M_j^{'J})$. 
\end{proof}

As in the untwisted case, we suppress the indeterminacy of the twisted virtual Alexander polynomial and simply write $H^\varrho_K(s,t,q) = f$ for any $f \in \cL_R$ if they are equal up to the indeterminacy indicated in Theorem \ref{twisted-def}.

The following result shows that twisted virtual Alexander polynomial also carries virtual crossing number information, compare with Theorem \ref{v-bound}.

\begin{theorem} \label{twist-v-bound}
If $K$ is a virtual knot or link, $\varrho \colon \VG_K \lto GL_n(R)$ is a representation and $H_K^{\varrho}(s,t,q)$ is its twisted virtual Alexander polynomial, then  
$$\text{$q$-$\width$}\; H_K^{\varrho}(s,t,q) \leq 2n \, v(K).$$
\end{theorem}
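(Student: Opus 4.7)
The plan is to extend the proof of Theorem~\ref{v-bound} by absorbing the factor $n$ introduced by the representation $\varrho$. Let $D$ be a virtual knot diagram of $K$ with $v(D)=m$, and take the associated Wirtinger-like presentation $P_D$ of $\VG_K$ with short-arc generators $a_1,\ldots,a_k$, auxiliary generators $s,q$, and two relators at each crossing. Theorem~\ref{tHispoly} identifies $H_K^\varrho(s,t,q)$ with $\det A$, where $A$ is the $nk\times nk$ twisted Alexander matrix obtained by applying $\Om$ entry-wise to the scalar Jacobian $J=(\partial r_i/\partial a_j)\in M_k(\ZZ F_{k+2})$.

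First I would inspect the $q$-content of $J$ on a row-by-row basis. Relators arising from classical crossings involve only $s$ and the arc variables (cf.\ Example~\ref{virtual trefoil2}), so every term of $\partial r_i/\partial a_j$ has $q$-exponent zero. At each virtual crossing, the two relators are of the form $d^{-1}qcq^{-1}$ and $a^{-1}q^{-1}fq$; their only non-zero Fox derivatives with respect to arc variables, namely
\[
d^{-1}q,\quad -d^{-1},\quad a^{-1}q^{-1},\quad -a^{-1},
\]
are single-term words whose $\al$-images have $q$-degrees in $\{0,1\}$ and $\{-1,0\}$ respectively. In particular, the $q$-degrees appearing in any single virtual-crossing row of $J$ lie in an interval of length at most one.

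Next, by Remark~\ref{rho-tensor-alpha}, an element $w=\sum m_i w_i\in\ZZ\VG_K$ is sent to $\sum m_i\varrho(w_i)\cdot\al(w_i)\in M_n(\cL_R)$, so every monomial appearing in any entry of $\Om(w)$ has $q$-degree equal to that of some $\al(w_i)$ with $m_i\neq 0$; the $\varrho$-factor contributes no $q$-dependence. Applying this to the Fox derivatives above, each of the $2m$ virtual-crossing rows of $J$ expands to $n$ rows of $A$ in which every monomial has $q$-degree in an interval of length at most one, while all remaining rows of $A$ have entries of $q$-degree zero throughout.

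Finally, the Leibniz expansion $\det A=\sum_\sigma\operatorname{sgn}(\sigma)\prod_i A_{i,\sigma(i)}$ writes $\det A$ as a sum of products of one entry per row. Only the $2nm$ rows coming from virtual crossings contribute non-trivially to the $q$-degree, and each contributes a value in an interval of length at most one, so every monomial of $\det A$ has $q$-degree lying in an interval of length at most $2nm$. Hence $\text{$q$-$\width$}\,H_K^\varrho(s,t,q)\leq 2nm$, and choosing $D$ with $v(D)=v(K)$ gives the asserted bound. The only subtle point is the row-by-row bookkeeping in the second step---verifying that twisting by $\varrho$ does not introduce monomials of larger $q$-degree than appear in $\al(J)$---which is routine given the tensor-product formula of Remark~\ref{rho-tensor-alpha}.
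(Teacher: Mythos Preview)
Your proof is correct and follows essentially the same approach as the paper's own proof, which is a very terse version of exactly this row-by-row $q$-degree count for the twisted Alexander matrix coming from a diagram realizing $v(K)$. Your write-up is in fact more explicit than the paper's: you spell out the Fox derivatives at a virtual crossing, invoke Remark~\ref{rho-tensor-alpha} to confirm that $\varrho$ contributes no $q$-dependence, and make the Leibniz-expansion bound precise, whereas the paper simply asserts that $2nm$ rows of $A$ carry a $q^{\pm 1}$ and concludes.
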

\begin{proof}
Suppose that $K$ admits a diagram with $m$ virtual crossings.  Then the corresponding 
twisted virtual Alexander matrix $A$ has $2n\cdot m$ rows with a $q$ or $q^{-1}$.  It follows that after a suitable normalization, the $q$-$\width$ of $\det (A)$ is at most $2n m$. 
\end{proof}

In Section \ref{section-3} we used the virtual Alexander polynomial to give lower bounds on the virtual crossing number $v(K)$ for several virtual knots. We will now show how to improve those bounds using twisted Alexander polynomials associated with representations
$\varrho \colon \VG_K \lto SL_2(\FF_2)$, where $\FF_2$ denotes the field with two elements. The computations in the following examples were performed using {\tt sage} \cite{Sage}.

\begin{example}
Consider the virtual knot $K=3.4$. A simple computation shows that
$H_K(s,t,q) = (s - q)(1-st)(1-qt)$. Since $H_K(s,t,q)$ has $q$-$\width$ equal to 2, Theorem \ref{v-bound} implies that $v(K) \geq 1.$ This bound is not sharp.

Using the twisted virtual Alexander polynomials, we can obtain a sharp bound for $v(K)$.
From the diagram of $K$ in Figure \ref{3-4}, we find the following presentation for $\VG_K$, which can be simplified to a presentation with only one meridional generator as follows:
\begin{eqnarray*}
\VG_{K} &=& \langle a,b,c,s,q   \mid  [s,q], \; a^{-1}q s^{-2}a^{-1}s^2q^{-2}cq^2s^{-1}asq^{-1},
b^{-1}s^{-1}a^{-1}q^2s^{-2}as^3q^{-2}a, \\
&& \hspace{3.4cm} c^{-1}q^2s^{-3}a^{-1}s^3q^{-3}bq^3s^{-2}as^2q^{-2} \rangle, \\
&=& \langle a,s, q   \mid  [s,q], \; a^{-1}qs^{-2}a^{-1}s^{-1}a^{-1}s^2q^{-3}a^{-1}q^2s^{-2}as^3q^{-2}aq^3s^{-2}asasq^{-1}  \rangle. 
\end{eqnarray*}
Setting $r = a^{-1}qs^{-2}a^{-1}s^{-1}a^{-1}s^2q^{-3}a^{-1}q^2s^{-2}as^3q^{-2}aq^3s^{-2} asasq^{-1},$ we  compute
the Fox derivative 
\begin{eqnarray*}
\frac{\partial r}{\partial a} &=& 
-a^{-1} - a^{-1}qs^{-2}a^{-1} - a^{-1}qs^{-1}(s^{-1}a^{-1})^2 - a^{-1}qs^{-1}(s^{-1}a^{-1})^2s^2q^{-3}a^{-1}  \\ 
&&+\; a^{-1}qs^{-1}(s^{-1}a^{-1})^2s^2q^{-3}a^{-1}q^2s^{-2}
+ a^{-1}qs^{-1}(s^{-1}a^{-1})^2s^2q^{-3}a^{-1}q^2s^{-2}as^3q^{-2} \\
&&+ \; a^{-1}qs^{-1}(s^{-1}a^{-1})^2s^2q^{-3}a^{-1}q^2s^{-2}as^3q^{-2}aq^3s^{-2} \\
&&+ \; a^{-1}qs^{-1}(s^{-1}a^{-1})^2s^2q^{-3}a^{-1}q^2s^{-2}as^3q^{-2}aq^3s^{-2}as.
\end{eqnarray*}

Let $\varrho \colon \VG_K \lto SL_2(\FF_2)$ be the representation defined by setting
$$
a=
\begin{bmatrix}
0 & 1 \\
1 & 0
\end{bmatrix}, \; 
s=
\begin{bmatrix}
0 & 1 \\
1 & 1
\end{bmatrix}, \; 
q=
\begin{bmatrix}
0 & 1 \\
1 & 1
\end{bmatrix},
$$
we compute the determinant of $\Om\left(\frac{\partial r}{\partial a}\right)$ and deduce that the twisted virtual  Alexander polynomial is given by
%$$ 
%H_K^{\varrho}(s,t,q)= (t^{-6} + s^{-2}t^{-8})\, q^{-4}+t^{-2} + s^{-6} t^{-8}+ (s^{-2} t^{-2} + s^{-6} t^{-6}) \, q^{2}.$$

$$
H_K^{\varrho}(s,t,q)= (s-q)^2(tq-1)^2(st-1)^2(stq-s-q)^2.$$
   
Since $H_K^{\varrho}(s,t,q)$ has $q$-$\width=6$, it follows that $v(K) \geq 2$. Comparing to Figure \ref{3-4}, we see this bound is sharp.

\begin{figure}[ht]
\centering
\includegraphics[scale=1.0]{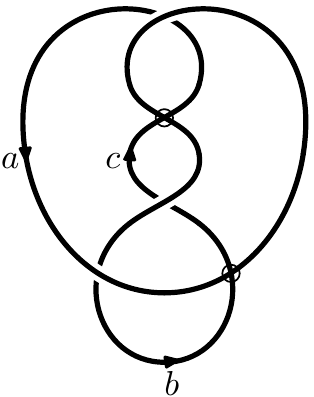}  
\caption{The virtual knot $3.4$ has $v(K)=2$.}
\label{3-4}
\end{figure}
\end{example}

\begin{example}

Let $K$ be the virtual knot $4.71$.  A straightforward  computation shows that $H_K(s,t,q)=0$.
From the diagram of $K$ in Figure \ref{4-71}, we derive the following presentation for $\VG_K$:
\begin{eqnarray*}  \VG_{K} &=& 
\langle a,b,s, q \mid [s,q], \; 
a^{-1}bs^{-1}q^2a q^{-2}s^2 b^{-1}as^{-2}q^2b^{-1}q^{-2}s, \\
&& \hspace{3.2cm} b a^{-1}q^{-2}s^2 b s^{-1} q^2 a b^{-1} q^{-2} s a^{-1}s^{-2}q^2 \rangle.
\end{eqnarray*}
Setting 
\begin{eqnarray*}
r_1&=&a^{-1}bs^{-1}q^2aq^{-2}s^2b^{-1}as^{-2}q^2b^{-1}q^{-2}s, \\
r_2&=& ba^{-1}q^{-2}s^2bs^{-1}q^2ab^{-1}q^{-2}sa^{-1}s^{-2}q^2,
\end{eqnarray*}
 we compute
the Fox derivatives 
\begin{eqnarray*}
\frac{\partial r_1}{\partial a} 
&=& 
-a^{-1} + a^{-1}bs^{-1}q^2 + a^{-1}bs^{-1}q^2aq^{-2}s^2b^{-1},  \\
\frac{\partial r_1}{\partial b} &=& a^{-1} - a^{-1}bs^{-1}q^2aq^{-2}s^2b^{-1} - a^{-1}bs^{-1}q^2aq^{-2}s^2b^{-1}as^{-2}q^2b^{-1},\\
\frac{\partial r_2}{\partial a} &=&
-ba^{-1} + ba^{-1}q^{-2}s^2bs^{-1}q^2 - ba^{-1}q^{-2}s^2bs^{-1}q^2ab^{-1}q^{-2}sa^{-1}, \\ 
\frac{\partial r_2}{\partial b} &=&
1 + ba^{-1}q^{-2}s^2 - ba^{-1}q^{-2}s^2bs^{-1}q^2ab^{-1}.
\end{eqnarray*}

Let $\varrho \colon \VG_K \lto SL_2(\FF_2)$ be the representation defined by setting
$$
a=
\begin{bmatrix}
0 & 1 \\
1 & 0
\end{bmatrix}, \; 
b=
\begin{bmatrix}
1 & 0 \\
1 & 1
\end{bmatrix}, \; 
s=
\begin{bmatrix}
0 & 1 \\
1 & 1
\end{bmatrix}, \; 
q=
\begin{bmatrix}
1 & 0 \\
0 & 1
\end{bmatrix}.  
$$
Applying $\Om$ to the twisted Alexander matrix,
we compute that the twisted virtual  Alexander polynomial is given by
%$$
%H_K^{\varrho}(s,t,q)= (s^6 + s^4t^{-2})\, q^{-4}+ (s^4+s^2t^{-2}) \, q^{-2}+ (t^{-2}+s^4 t^2) + (1+s^2 t^2)\, q^2 + (t^2 +s^{-2})\, q^4.$$
$$H_K^{\varrho}(s,t,q)= (1-st)^2 (s^2 + sq + q^2)^2  (t q^2 - s)^2.$$
 Since $H_K^{\varrho}(s,t,q)$ has $q$-$\width =8$, it follows that $v(K) \geq 2$. Since $K$ has a diagram with three virtual crossings, we conclude that $v(K)=2$ or 3.
\begin{figure}[ht]
\centering
\includegraphics[scale=1.0]{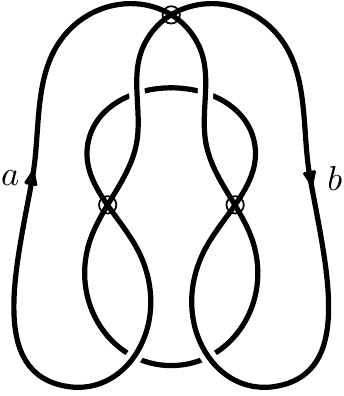}  
\caption{The virtual knot $4.71$.}
\label{4-71}
\end{figure}

\end{example}

\begin{example}
Let $K$ be the virtual knot $4.98$.  An easy computation shows that $H_K(s,t,q)=0$.
From the diagram of $K$ in Figure \ref{4-98}, we derive the following presentation for $\VG_K$:
\begin{eqnarray*}  
\VG_{K} &=& \langle a,b,s, q \mid [s,q], \; 
q^{-2}a^{-1}sbaq^4s^{-3}as^4q^{-4}a^{-1}b^{-1}s^{-1}aq^2s^{-1}b^{-1}, \\   
&& \hspace{3cm} a^{-1}sbaq^2s^{-2}a^{-1}s^3q^{-4}a^{-1}sbaq^4s^{-3}as^2q^{-2}a^{-1}b^{-1}s^{-2}  \rangle. 
\end{eqnarray*}
Setting 
\begin{eqnarray*}
r_1&=&q^{-2}a^{-1}sbaq^4s^{-3}as^4q^{-4}a^{-1}b^{-1}s^{-1}aq^2s^{-1}b^{-1}, \\
r_2&=& a^{-1}sbaq^2s^{-2}a^{-1}s^3q^{-4}a^{-1}sbaq^4s^{-3}as^2q^{-2}a^{-1}b^{-1}s^{-2},
\end{eqnarray*}
 we compute
the Fox derivatives 
\begin{eqnarray*}
\frac{\partial r_1}{\partial a} \!\!\!
&=& \!\!\!
-q^{-2}a^{-1} + q^{-2}a^{-1}sb + q^{-2}a^{-1}sbaq^4s^{-3} - q^{-2}a^{-1}sbaq^4s^{-3}as^4q^{-4}a^{-1} \\
&&+ q^{-2}a^{-1}sbaq^4s^{-3}as^4q^{-4}a^{-1}b^{-1}s^{-1},  \\
\frac{\partial r_1}{\partial b} 
\!\!\!&=& \!\!\! q^{-2}a^{-1}s - q^{-2}a^{-1}sbaq^4s^{-3}as^4q^{-4}a^{-1}b^{-1} \\ 
&&- q^{-2}a^{-1}sbaq^4s^{-3}as^4q^{-4}a^{-1}b^{-1}s^{-1}aq^2s^{-1}b^{-1},\\
\frac{\partial r_2}{\partial a} \!\!\!&=&\!\!\!
-a^{-1} + a^{-1}sb - a^{-1}sbaq^2s^{-2}a^{-1} - a^{-1}sbaq^2s^{-2}a^{-1}s^3q^{-4}a^{-1}  \\
&& + a^{-1}sbaq^2s^{-2}a^{-1}s^3q^{-4}a^{-1}sb + a^{-1}sbaq^2s^{-2}a^{-1}s^3q^{-4}a^{-1}sbaq^4s^{-3} \\
&&- a^{-1}sbaq^2s^{-2}a^{-1}s^3q^{-4}a^{-1}sbaq^4s^{-3}as^2q^{-2}a^{-1}, \\
\frac{\partial r_2}{\partial b} \!\!\!&=&\!\!\!
a^{-1}s + a^{-1}sbaq^2s^{-2}a^{-1}s^3q^{-4}a^{-1}s - a^{-1}sbaq^2s^{-2}a^{-1}s^3q^{-4}a^{-1}sbaq^4s^{-3}as^2q^{-2}a^{-1}b^{-1}.
\end{eqnarray*}

Let $\varrho \colon \VG_K \lto SL_2(\FF_2)$ be the representation defined by setting
$$
a=
\begin{bmatrix}
0 & 1 \\
1 & 0
\end{bmatrix}, \; 
b=
\begin{bmatrix}
1 & 0 \\
1 & 1
\end{bmatrix}, \; 
s=
\begin{bmatrix}
1 & 0 \\
0 & 1
\end{bmatrix}, \; 
q=
\begin{bmatrix}
1 & 0 \\
0 & 1
\end{bmatrix}.  
$$

\begin{figure}[ht]
\centering
\includegraphics[scale=1.0]{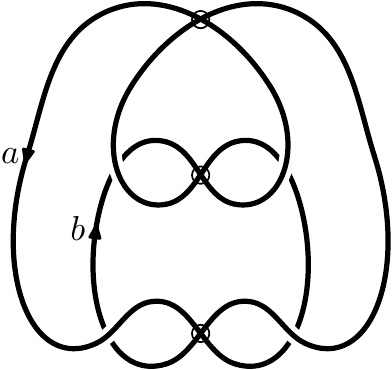}  
\caption{The virtual knot $4.98$.}
\label{4-98}
\end{figure}
\end{example}

Applying $\Om$ to the twisted Alexander matrix,
we compute that the twisted virtual  Alexander polynomial is given by
%$$
%H_K^{\varrho}(s,t,q)= (s^{6} + s^{4} t^{-2})\, q^{-4} + s^{4} t^{2}+ t^{-2} + (t^{2} +  s^{-2}) \, q^{4}.$$
$$H_K^{\varrho}(s,t,q)= (s - q)^4  (1-s t)^2  (s-t q^2)^2.$$
 
Since $H_K^{\varrho}(s,t,q)$ has $q$-$\width =8$, it follows that $v(K) \geq 2$. Since $K$ has a diagram with three virtual crossings, we conclude that  $v(K)=2$ or 3.
 
%%%%%%%%%%%%%%%%%%%%%%%%%%%%%%%%%%%%%%%

\section{Twisted virtual Alexander polynomials via braids}  \label{section-8}
In this section, we relate the twisted Alexander polynomial of a virtual knot or link to the twisted virtual Burau representation, and we use this to establish twisted analogues of the results from Section \ref{section-4}.
Throughout this section, we assume $R$ is a unique factorization domain and 
 $\cL_R = R[s^{\pm 1}, t^{\pm 1}, q^{\pm 1}]$ will denote the ring of Laurent polynomials over $R$.

Let $K$ be a virtual knot or link obtained as the closure of a virtual braid $\be \in \VB_k$.  
Recall that the virtual knot group of $K$ has a presentation
\[
\VG_\be = \langle x_1, \ldots, x_k, s,q \mid  x_1=x_1^\be, \ldots, x_k=x_k^\be, [s,q]=1 \rangle,
\]
see \eqref{present2}.
Here, the notation $\VG_\be$ indicates that the virtual knot group $\VG_K$ is equipped with the above presentation.
A representation ${\varrho} \colon \VG_\be \to GL_n(R)$ extends
to a representation 
$\varrho \colon  F_{k+2} = \langle x_1, \ldots, x_k, s,q \rangle ~ \to~ GL_n(R)$,
where we reuse the symbol $\varrho$ to avoid an excess of notation,
given by composition with the homomorphism $\pi_\be \colon F_{k+2} \to \VG_\be$ that takes a generator to a generator of the same name.

The first result is the twisted analogue of Theorem \ref{Burau}, 
interpreting the twisted virtual Alexander polynomial in terms of the twisted Burau representation.

\begin{theorem} \label{thm:twisted-Burau}
Let  $\be \in \VB_{k}$ be a virtual braid and  let $K$  be the virtual knot or link obtained from the closure of $\be$.
Let ${\varrho} \colon \VG_\be \lto GL_n(R)$ be a representation.
Then the virtual twisted Alexander polynomial of $K$ as in Definition {\rm \ref{DefTwisted}} is given by
\[
H^{\varrho}_K(s,t,q) = \det\left(\Psi_{\varrho}(\be)-I_{nk}\right)
\]
where $\Psi_{\varrho}$ is the ${\varrho}$-twisted Burau representation as in Definition {\rm \ref{def_twisted_virtual_Burau_rep}}.
\end{theorem}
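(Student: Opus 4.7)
The plan is to start from the presentation \eqref{present2} of $\VG_\be$ coming from the braid closure, directly invoke Theorem \ref{tHispoly} to reduce $H^\varrho_K$ to a block-Jacobian determinant, and then identify that Jacobian with $\Psi_\varrho(\be) - I_{nk}$.

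First, I would note that \eqref{present2} is a Wirtinger-like presentation with meridional generators $x_1, \ldots, x_k$ (each sent to $t$ under abelianization), auxiliary generators $s, q$, relations $r_i = x_i^\be x_i^{-1}$ for $i = 1, \ldots, k$, and the commutator $[s,q]$.  It arises from the natural braid-closure diagram of $\wh\be$ after Tietze-eliminating the intermediate short-arc generators; Lemma \ref{WirtingertStrongTietze} together with Theorem \ref{twisted-def} guarantees the resulting polynomial agrees with the one computed from any other diagram of $K$. Hence Theorem \ref{tHispoly} applies verbatim and yields $H^\varrho_K(s,t,q) = \det(A)$, where $A$ is the $k \times k$ block matrix with $n \times n$ blocks $A_{ij} = \Om\bigl(\tfrac{\partial r_i}{\partial x_j}\bigr)$ for $1 \leq i, j \leq k$.

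Next I would compute the Fox derivatives. By the Leibniz rule,
$$\frac{\partial r_i}{\partial x_j} = \frac{\partial x_i^\be}{\partial x_j} + x_i^\be \, \frac{\partial x_i^{-1}}{\partial x_j} = \frac{\partial x_i^\be}{\partial x_j} - \delta_{ij}\, x_i^\be x_i^{-1}.$$
The map $\Om$ factors through $\ZZ\VG_\be$, and in $\VG_\be$ the defining relation $r_i$ forces $x_i^\be x_i^{-1}=1$, so $\Om(x_i^\be x_i^{-1}) = I_n$. Moreover, a direct comparison of definitions shows that on $\ZZ F_{k+2}$ the map $\Om$ coincides with the map $\wt\varrho = \varrho \otimes \al$ used to define the twisted Burau representation: both send a word $w \in F_{k+2}$ to $\varrho(w)\,\al(w)$, where the two $\al$'s agree because each meridional generator goes to $t$ while $s, q$ are fixed. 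Therefore
$$A_{ij} = \wt\varrho\!\left(\frac{\partial x_i^\be}{\partial x_j}\right) - \delta_{ij} I_n,$$
which is exactly the $(i,j)$-block of $\Psi_\varrho(\be) - I_{nk}$. This gives $A = \Psi_\varrho(\be) - I_{nk}$, and taking determinants completes the proof.

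The main obstacle is bookkeeping: first, aligning the $\Om$ of the twisted Alexander construction (which factors through $\ZZ\VG_\be$) with the $\wt\varrho$ of the twisted Burau construction (which is defined directly on $F_{k+2}$); and second, verifying that the braid-closure presentation \eqref{present2} is of the Wirtinger-like form needed to invoke Theorem \ref{tHispoly}. Once these identifications are in place, the actual computation is a one-line application of the Leibniz rule and the result follows immediately.
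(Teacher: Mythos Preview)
Your proposal is correct and follows essentially the same route as the paper: both reduce to Theorem \ref{tHispoly} by noting that the braid presentation \eqref{present2} is strongly Tietze equivalent (via \textbf{S4} moves eliminating the intermediate short-arc generators) to the Wirtinger-like presentation of the closure diagram. You simply spell out the Fox-derivative computation identifying the resulting twisted Alexander matrix with $\Psi_\varrho(\be)-I_{nk}$, which the paper leaves implicit.
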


\begin{proof}
It is not difficult to verify that the given presentation of $\VG_\be$ is strongly Tietze equivalent to the Wirtinger-like presentation obtained from the diagram of $K = \wh \be$ as the braid closure. In fact, this can be achieved using exclusively {\bf S4} moves.  The conclusion then follows  from Theorem \ref{tHispoly}. 
 \end{proof}
%%%%%%%%%

Define the polynomial  invariant  $H^\varrho_\be(s,t,q) \in \cL_R$ of a virtual braid $\be \in \VB_k$ and an arbitrary representation $\varrho \colon F_{k+2} \to GL_n(R)$ by
\begin{equation}\label{eq-twist-H}
H^\varrho_\be(s,t,q) = \det \left( \Psi_{\varrho}(\be)-I_{nk}\right).
\end{equation}

We will use this approach to extend certain results to the twisted setting.  The following observation, which can be verified by direct calculation, shows that the images of the generators $\si_{i}, \, \si_{i}^{-1}, \, \tau_i  \, \in \VB_k$ under  
$\Psi_{\varrho}$ are given by
the following $nk \times nk$ matrices:

\begin{equation} \label{eq:twisted-Burau}
\begin{split}
\Psi_{\varrho}(\si_{i}) &=
\begin{bmatrix}
I_{n(i-1)} &  & \cdots& {\bf 0}\\
  & 0 & \varrho(s)s & \vdots  \\
 \vdots& \varrho(x_{i+1}) t  & I_n-\varrho(x_{i+1} x_{i} sx_{i+1}^{-1})st &    \\[0.6em]
{\bf 0} & \cdots &  & \hspace{0.3em} I_{n(k-i-1)} 
\end{bmatrix}, \\[0.5em] 
\Psi_{\varrho}(\si_{i}^{-1}) &=
\begin{bmatrix}
I_{n(i-1)} &\hspace{0.3em} & \cdots  & {\bf 0} \\
 &\hspace{0.3em}  \varrho(s^{-1}x_i^{-1}s x_{i+1}) -\varrho(s^{-1}x_i^{-1}) s^{-1}t^{-1} & \varrho(s^{-1}x_i^{-1}s) t^{-1} & \vdots\\
\vdots & \hspace{0.3em}\varrho(s^{-1}) s^{-1} & 0  &  \\[0.4em]
{\bf 0} & \hspace{0.3em} \cdots &  & \hspace{0.3em} I_{n(k-i-1)} 
\end{bmatrix},  \\[0.5em]
\Psi_{\varrho}(\tau_{i}) &=
\begin{bmatrix}
I_{n(i-1)} &  &  \cdots & {\bf 0}\\
 & 0 & \varrho(q) q & \vdots\\
\vdots & \varrho(q^{-1}) q^{-1} & 0  & \\[0.6em]
{\bf 0} & \cdots &  & \hspace{0.3em} I_{n(k-i-1)} 
\end{bmatrix}. 
\end{split}
\end{equation}

Recall from Section \ref{section-4} that for $\varrho \colon F_{k+2} \to GL_n(R)$
and $\be \in \VB_k$, we write $\be_*\varrho$  for
$\Phi(\be)_*\varrho$, where $\Phi \colon F_{k+2} \to \Aut(F_{k+2} \, \rel \, \{s,q\})$ is the fundamental representation.
We will see how to determine $\Psi_{\be_* \varrho}(g_i)$ for a generator $g_i$ of $\VB_k$.

\begin{remark} \label{rem-mult}
Suppose $g_i \in \{\si_i, \si_i^{- 1}, \tau_i\}$ is a generator of $\VB_k$. By equation \eqref{psipropertyone}, it follows that $\Psi_\varrho(g_i \be) = \Psi_{\be_* \varrho}(g_i) \Psi_\varrho(\be)$. 
Then $\Psi_{\be_* \varrho}(g_i)$is obtained by making a small modification to
the matrices in \eqref{eq:twisted-Burau}.

For instance, if $g_i = \si_i$, then since $\be_* \varrho(x_i) = \varrho(x_i^\be)$,  \eqref{eq:twisted-Burau} implies that
$$\Psi_{\be_* \varrho}(\si_i) = 
\begin{bmatrix}
I_{n(i-1)} &  & \cdots& {\bf 0}\\
  & 0 & \varrho(s)s & \vdots  \\
 \vdots& \varrho(x^\be_{i+1}) t  & I_n-\varrho(x^\be_{i+1} x_{i}^\be s (x_{i+1}^\be)^{-1})st &    \\[0.4em]
{\bf 0} & \cdots &  & \hspace{0.3em} I_{n(k-i-1)} 
\end{bmatrix}.$$
A similar argument leads to corresponding formulas for $\Psi_{\be_* \varrho}(\si_i^{-1})$ and $\Psi_{\be_* \varrho}(\tau_i),$ though in the latter case, note that since $\be_*(q)=q,$ we have $\Psi_{\be_* \varrho}(\tau_i) = \Psi_\varrho(\tau_i).$
When applied inductively, these formulas determine $\Psi_\varrho(\be)$ for any braid $\be$ once it is written as a word in the generators.
\end{remark}

%%%%%%%%%

Next, we prove a twisted analogue of Theorem \ref{theorem-H-K}.

\begin{theorem} \label{theorem-H-K-twisted}
Let $K$ be a virtual knot or link and  $\varrho \colon \VG_K \lto GL_n(R)$ a representation. Then the twisted virtual Alexander polynomial
satisfies $H^\varrho_K(s,t,q) = H^\varrho_K(sq^{-1}, tq, 1)$.
\end{theorem}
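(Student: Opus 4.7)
The plan is to adapt the proof of Theorem~\ref{theorem-H-K} to the twisted setting. By Theorem~\ref{thm:twisted-Burau}, it suffices to prove $\det(\Psi_\varrho(\be) - I_{nk}) = \det(\Psi_\varrho(\be)(sq^{-1}, tq, 1) - I_{nk})$ for any virtual braid $\be \in \VB_k$ with closure $K$.  Writing $\be = \th_1 \cdots \th_m$ as a word in the generators $\th_j \in \{\si_i^{\pm 1}, \tau_i\}$ and applying the iterated cocycle identity \eqref{psipropertyonemultiple}, we obtain
\begin{equation*}
\Psi_\varrho(\be) = \Psi_{\varrho_1}(\th_1) \, \Psi_{\varrho_2}(\th_2) \cdots \Psi_{\varrho_m}(\th_m),
\end{equation*}
where $\varrho_j = (\th_{j+1} \cdots \th_m)_*\varrho$ for $j < m$ and $\varrho_m = \varrho$.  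This is a genuine product of matrices over $\cL_R$, even though $\Psi_\varrho$ is not itself a homomorphism.

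Next, introduce the block-diagonal $nk \times nk$ matrix
\begin{equation*}
\La = \diag(I_n, \, qI_n, \, q^2 I_n, \, \ldots, \, q^{k-1}I_n),
\end{equation*}
the natural twisted analogue of the matrix used in the untwisted proof.  The central technical step is to verify, using the explicit formulas \eqref{eq:twisted-Burau}, that for every representation $\varrho'$ of $F_{k+2}$ and every generator $g \in \{\si_i^{\pm 1}, \tau_i\}$ of $\VB_k$,
\begin{equation*}
\La \, \Psi_{\varrho'}(g) \, \La^{-1} = \Psi_{\varrho'}(g)(sq^{-1}, tq, 1)
\end{equation*}
as matrices over $\cL_R$.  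Conjugation by the relevant block $\diag(q^{i-1}I_n, q^i I_n)$ of $\La$ rescales the $(i,i+1)$-entry of $\Psi_{\varrho'}(g)$ by $q^{-1}$ and the $(i+1,i)$-entry by $q$, while leaving the diagonal blocks unchanged.  A case-by-case inspection of $\si_i$, $\si_i^{-1}$, and $\tau_i$ then matches the claimed evaluation: for $\tau_i$ the $q$-scaling cancels the factors $\varrho'(q)q$ and $\varrho'(q^{-1})q^{-1}$ to produce the $q=1$ specialization, and for $\si_i^{\pm 1}$ it realizes the substitutions $s \mapsto sq^{-1}$ and $t \mapsto tq$ in the off-diagonal entries while leaving the $st$ products in the diagonal blocks invariant.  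The $\varrho'$-valued factors are constant in $s,t,q$ and play no role.

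Applying this identity factor-by-factor then gives
\begin{align*}
H^\varrho_K(s,t,q) &= \det\left(\Psi_\varrho(\be) - I_{nk}\right) = \det\left(\La \, \Psi_\varrho(\be) \, \La^{-1} - I_{nk}\right) \\
                   &= \det\left(\La \Psi_{\varrho_1}(\th_1)\La^{-1} \cdots \La\Psi_{\varrho_m}(\th_m)\La^{-1} - I_{nk}\right) \\
                   &= \det\left(\Psi_{\varrho_1}(\th_1)(sq^{-1}, tq, 1) \cdots \Psi_{\varrho_m}(\th_m)(sq^{-1}, tq, 1) - I_{nk}\right) \\
                   &= \det\left(\Psi_\varrho(\be)(sq^{-1}, tq, 1) - I_{nk}\right) = H^\varrho_K(sq^{-1}, tq, 1).
\end{align*}
The main obstacle is the initial verification of the conjugation identity for each of the three generator types, which amounts to a careful but routine block-matrix computation.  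The twisted setting introduces no essential difficulty beyond the use of the cocycle relation \eqref{psipropertyonemultiple} in place of strict multiplicativity of $\Psi_\varrho$.
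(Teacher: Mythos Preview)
Your proof is correct and follows essentially the same approach as the paper: both introduce the block-diagonal matrix $\La = \diag(I_n, qI_n, \ldots, q^{k-1}I_n)$, verify the conjugation identity $\La\,\Psi_{\varrho'}(g)\,\La^{-1} = \Psi_{\varrho'}(g)(sq^{-1},tq,1)$ on generators for an arbitrary representation $\varrho'$, and then propagate this to all of $\Psi_\varrho(\be)$ via the cocycle relation. The only cosmetic difference is that the paper phrases the propagation as an induction on the word length of $\be$, whereas you invoke the iterated identity \eqref{psipropertyonemultiple} directly and apply the generator check factor-by-factor; these are the same argument.
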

\begin{proof}
Let $\be \in \VB_k$ be a virtual braid with $K = \wh \be$, and
let $\varrho \colon F_{k+2} \to GL_n(R)$ a representation satisfying $\be_* \varrho = \varrho$.
If 
$\be = \th_1 \cdots \th_\ell$, where $\th_j \in \{ \si_i^{\pm 1},  \tau_i  \mid   1\leq i \leq k-1\}$, 
then we set $| \be|=\ell,$ which is the length of $\be$ as a word in the standard generators. 
Consider the diagonal  $kn \times kn$ block matrix
$$\La= \begin{bmatrix} I_n &&\cdots& 0 \\ & qI_n&& \vdots  \\ \vdots && \ddots \\ 0 &\cdots&& q^{k-1}I_n \end{bmatrix}.$$  
We will argue by induction on $|\be|$ that 
\begin{equation} \label{induction}
\La \Psi_\varrho(\be) \La^{-1}(s,t,q)=\Psi_\varrho(\be)(sq^{-1}, tq, 1).
\end{equation}
If $|\be|=1$, 
then $\be=\th_1$ where $\th_1 \in \{ \si_i^{\pm 1},  \tau_i  \mid   1\leq i \leq k-1\}$.
Using equation \eqref{eq:twisted-Burau}, it is easy to check that
$$\La \Psi_\varrho(\th_1) \La^{-1}(s,t,q)=\Psi_\varrho(\th_1)(sq^{-1}, tq, 1).$$
This proves the base case of our induction.  Using Remark \ref{rem-mult}, one can similarly check that 
$$\La \Psi_{\be_* \varrho}(\th_1) \La^{-1}(s,t,q)=\Psi_{\be_* \varrho}(\th_1)(sq^{-1}, tq, 1).$$
This observation will be used in the inductive step of the argument below.

Now suppose, by induction, that equation \eqref{induction} holds for all braids $\be$ of length $\ell$.
Given a braid $\be'$ of length $\ell+1$, then we can write it as $\be'=\th_1 \be$, where $| \be | = \ell.$  By the inductive hypothesis and the above observation, it follows that
\begin{eqnarray*}
\La \Psi_\varrho(\be') \La^{-1}(s,t,q) 
&=& \La\Psi_{\be_* \varrho}(\th_1)\La^{-1}(s, t, q)\; \La \Psi_\varrho(\be)\La^{-1}(s,t,q)\\
&=& \Psi_{\be_* \varrho}(\th_1)(sq^{-1}, tq, 1) \; \Psi_\varrho(\be)(sq^{-1},tq,1) \\
&=& \Psi_\varrho(\be)(sq^{-1},tq,1).
\end{eqnarray*}
This completes the proof of \eqref{induction}, and
this shows that
\begin{eqnarray*}
H_\be^\varrho(s,t,q) 
&=& \det\left(\Psi_\varrho(\be)-I_{kn}\right) \\
&=& \det\left(\La \Psi_\varrho(\be)\La^{-1}-I_{kn}\right)  \\
&=& H_\be^\varrho(sq^{-1}, tq, 1).
\end{eqnarray*}                                         
It follows that $H_K^\varrho(s, t, q)= H_K^\varrho(sq^{-1}, tq, 1).$
\end{proof}

Just as in Remark \ref{rem-identity}, one can easily see that $H^\varrho_K(s,t,q) = H^\varrho_K(1, ts, qs^{-1})= H^\varrho_K(st, 1, qt)$. 

We now prove a twisted analogue of Lemma \ref{labelthislemma}. The proof requires us to assume that $\varrho(s) = \varrho(q)=I,$ but our computations suggest this result holds under the weaker assumption that $\varrho(s) = \varrho(q)$.
\begin{lemma}
\label{s-q-divides-twisted-lemma}
Let $K$ be a virtual knot or link and  $\varrho \colon \VG_K \lto GL_n(R)$ a representation of the virtual knot group such that $\varrho(s)=\varrho(q)=I_n$.
Then $H_K^{\varrho}(s,t,1)$ is divisible by $(s-1)^n$.
\end{lemma}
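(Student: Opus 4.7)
The plan is to imitate the proof of Lemma \ref{labelthislemma}, but with scalar column operations replaced by block operations of size $n$. The hypothesis $\varrho(s) = \varrho(q) = I_n$ is essential because it ensures that $\Om(s-1) = (s-1)I_n$ and $\Om(q-1) = (q-1)I_n$ are central scalar matrices, so the factor $(s-1)$ will pass cleanly through the block manipulations. First I would apply $\Om$ to the fundamental Fox identity \eqref{fundamental-Fox} for each Wirtinger relation $r_i$; using $\Om(r_i - 1) = 0$ (since $r_i$ represents the identity in $\VG_K$) and setting $q=1$, this yields the block-matrix identity
\[
A \cdot \mathbf{T} \;=\; -(s-1)\, \mathbf{C},
\]
where $A$ denotes the twisted Alexander matrix evaluated at $q=1$, viewed as an $nk \times nk$ matrix, $\mathbf{T}$ is the $nk \times n$ block column obtained by stacking the $n \times n$ matrices $T_j = \varrho(a_j)\,t - I_n$ for $j = 1, \ldots, k$, and $\mathbf{C}$ is the analogous stack of the blocks $\Om(\partial r_i/\partial s)\big|_{q=1}$.

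Next I would right-multiply $A$ by the block lower-triangular matrix
\[
\widetilde{\mathbf{T}} \;=\; \begin{pmatrix} T_1 & \mathbf{0} \\ T_{\geq 2} & I_{n(k-1)} \end{pmatrix},
\]
whose first $n$ columns coincide with $\mathbf{T}$ and whose remaining $n(k-1)$ columns are the trailing columns of the identity. Then $\det(\widetilde{\mathbf{T}}) = \det(T_1)$, and the first $n$ columns of $A\widetilde{\mathbf{T}}$ equal $A\mathbf{T} = -(s-1)\mathbf{C}$; factoring $(s-1)$ out of each of these $n$ columns gives
\[
\det(A)\cdot\det(T_1) \;=\; (s-1)^n \cdot Q
\]
for some $Q \in R[s^{\pm 1}, t^{\pm 1}]$.

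To conclude, I would verify that $\gcd\bigl((s-1)^n, \det(T_1)\bigr) = 1$. The determinant $\det(T_1) = \det(\varrho(a_1)\,t - I_n)$ lies in $R[t^{\pm 1}]$; its value at $t=0$ is $(-1)^n$, so it is a nonzero Laurent polynomial in $t$ alone. Since $s-1$ is prime in the UFD $R[s^{\pm 1}, t^{\pm 1}]$ (because $R[s^{\pm 1}, t^{\pm 1}]/(s-1) \cong R[t^{\pm 1}]$ is a domain) and cannot divide a nonzero element of $R[t^{\pm 1}]$, the coprimality holds, and $(s-1)^n \mid H_K^{\varrho}(s,t,1)$ follows. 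The main obstacle is choosing the auxiliary matrix $\widetilde{\mathbf{T}}$ so that its determinantal correction $\det(T_1)$ is transparently coprime to $s-1$; this is what forces the particular stacking above, in which the pivot block $T_1$ depends only on $t$ and is thereby unaffected by the specialization $s=1$.
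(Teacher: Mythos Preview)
Your proof is correct and follows essentially the same approach as the paper: both apply $\Omega$ to the fundamental Fox identity, right-multiply the twisted Alexander matrix by the block-triangular matrix whose first block column is the stack of the $T_j = \varrho(a_j)t - I_n$, and read off the factor $(s-1)^n$ from the resulting first $n$ columns after setting $q=1$. Your argument is in fact slightly more complete than the paper's, which stops at $H_K^\varrho(s,t,1)\det(\varrho(a_1)t - I_n) = (s-1)^n p(s,t)$ and asserts the divisibility without justifying that $(s-1)$ cannot divide $\det(\varrho(a_1)t - I_n)$; your observation that this determinant lies in $R[t^{\pm 1}]$ and is nonzero (its value at $t=0$ being $(-1)^n$), together with the primality of $s-1$, supplies exactly that missing step.
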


%%%%%%%%%%%%%%%%%%%%%%%%%%%%%%

\begin{proof} Using a virtual knot diagram, we obtain a Wirtinger-like presentation of the virtual knot group $\VG_K =\langle a_1,\ldots, a_k, s,q \mid r_1, \ldots, r_k, [s,q] \rangle$. Let
$$
M = \begin{bmatrix} A & \Om \left( \frac{\partial r}{\partial s} \right) & \Om \left(\frac{\partial r}{\partial q}\right) \\ {\bf 0} & \Om(1-q) & \Om(s-1) \end{bmatrix}
$$ 
be the associated twisted presentation matrix and let
$A_1, \ldots, A_k$ be the block columns of twisted Alexander matrix $A$.
Applying $\Omega$ to the fundamental identity \eqref{fundamental-Fox-two}, after replacing $n$ by $k$ in \eqref{fundamental-Fox-two}, yields
\begin{equation}\label{fundamental-Fox-two-twisted}
\sum^k_{j=1}
 A_j\Omega(a_j-1) +
\Omega\left( \frac{\partial r^{}}{\partial s_{}} \right)\Omega(s - 1) +
\Omega \left( \frac{\partial r}{\partial q} \right)\Omega(q - 1) = {\bf 0}.
\end{equation}

For $1 \leq j \leq k$, let $B_j = \Omega\left(a_j -1\right) = \varrho(a_j)\, t - I_n$. 
Also let
$$
A' = A ~\begin{bmatrix}
B_1    &0 &\cdots &  { 0} \\ 
B_2 & I_n && \vdots  \\
\vdots& \vdots & \ddots &0 \\[0.3em]
B_k    &0   & \cdots&I_{n}
\end{bmatrix}.
$$
Observe that $\det(A') = \det(A)\det(B_1)= H_K^\varrho(s,t,q) \det \left(  \varrho(a_1)\, t - I_n \right)$.
We have,
$$
\Omega(s-1)=\varrho(s)s-I_n= (s-1) I_n
~\text{ and }~
\Omega(q-1)=\varrho(q)q-I_n = (q-1) I_n.
$$
By (\ref{fundamental-Fox-two-twisted}) and the above expressions for $\Omega(s-1)$ and $\Omega(q-1)$,
the first block column of $A'$ is
$$
\sum^k_{j=1} A_j \, B_j =  - \Omega\left( \frac{\partial r^{}}{\partial s_{}} \right) \,\, (s - 1) ~-~
\Omega \left( \frac{\partial r}{\partial q} \right) \,\, (q - 1) 
$$
Evaluating at $q=1$, 
we see that each of the first $n$ columns of $A'|_{q=1}$
is of the form $(s-1)$ times a column vector.
Hence $\det(A') |_{q=1} = (s -1)^n \, p(s,t)$ for some polynomial $p(s,t)$.
It follows that
$$
H_K^\varrho(s,t,1) \det \left(  \varrho(a_1)\, t - I_n \right)  =  (s -1)^n \, p(s,t)
$$
and so $H_K^\varrho(s,t,1)$ is divisible by $(s-1)^n$.
\end{proof}

%%%%%%%%%%%%%%%%%%%%%%%%%%%%%%%%%%%%%%%%%%%%%

Combining Theorem \ref{theorem-H-K-twisted} and Lemma \ref{s-q-divides-twisted-lemma} yields:

\begin{corollary}
If $K$ is a virtual knot or link and  $\varrho \colon \VG_K \lto GL_n(R)$ is a representation such that $\varrho(s)=\varrho(q)=I_n$, then $H_K^{\varrho}(s,t,q)$ is divisible by $(s-q)^n$.
\end{corollary}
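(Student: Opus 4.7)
The plan is simply to combine the two cited results. By Lemma \ref{s-q-divides-twisted-lemma} we may write
$$H_K^{\varrho}(s,t,1) = (s-1)^n \, g(s,t)$$
for some Laurent polynomial $g(s,t) \in R[s^{\pm 1}, t^{\pm 1}]$. Then Theorem \ref{theorem-H-K-twisted} applied to this factorization yields
$$H_K^{\varrho}(s,t,q) \;=\; H_K^{\varrho}(sq^{-1}, tq, 1) \;=\; (sq^{-1}-1)^n \, g(sq^{-1}, tq) \;=\; q^{-n}(s-q)^n \, g(sq^{-1}, tq).$$
Since $g(sq^{-1}, tq)$ lies in $\cL_R = R[s^{\pm 1}, t^{\pm 1}, q^{\pm 1}]$ and $q^{-n}$ is a unit in $\cL_R$, the factor $(s-q)^n$ divides $H_K^{\varrho}(s,t,q)$ in $\cL_R$, which is the claim.

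The only point requiring any care is the indeterminacy inherent in the twisted virtual Alexander polynomial. The identity of Theorem \ref{theorem-H-K-twisted} is an equality of elements of $\cL_R$ well-defined up to a unit of the form $\varepsilon(s^{e_1}t^{e_2}q^{e_3})^n$ by Theorem \ref{twisted-def}, and any such correcting factor is a unit of $\cL_R$, so divisibility by $(s-q)^n$ is unaffected by this ambiguity. Similarly, the substitution $s \mapsto sq^{-1}$, $t \mapsto tq$, $q \mapsto 1$ is a well-defined ring map into $\cL_R$ since it lands in Laurent polynomials. Thus no genuine obstacle arises, and the corollary follows as a direct consequence of the two preceding results.
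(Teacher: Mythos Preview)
Your proof is correct and is exactly the argument the paper intends: it simply asserts that the corollary follows by combining Theorem~\ref{theorem-H-K-twisted} and Lemma~\ref{s-q-divides-twisted-lemma}, and your computation spells this out. Your remark about the indeterminacy being a unit, hence irrelevant to divisibility, is the only thing one might add, and you handled it correctly.
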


%%%%%%%%%%%%%%%%%%%%%%%%%%%%%%%%%%%%%%%

\section{A normalization for the twisted virtual Alexander polynomial}  \label{section-9}
In this section, we give a normalization for the twisted Alexander polynomial of a virtual knot or link.
Throughout this section, we assume $R$ is a unique factorization domain and 
 $\cL_R = R[s^{\pm 1}, t^{\pm 1}, q^{\pm 1}]$ will denote the ring of Laurent polynomials over $R$.

In order to define the normalization, we consider the effect of the virtual Markov moves VM1, VM2, VM3 on $H^\varrho_\be(s,t,q).$

\begin{proposition}
\label{twist-VM1-inv}
Let $ \be, \ga  \in \VB_k$.   Assume $\be_*\varrho =\varrho$.  Then 
\[
H^{\ga_* \varrho}_{\ga\be \ga^{-1}}(s,t,q)  = H^{\varrho}_\be(s,t,q).
\]
\end{proposition}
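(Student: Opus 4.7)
The plan is to apply equation \eqref{psipropertytwo} directly. Recall that as a consequence of Proposition \ref{cocycle-properties}(ii), we have the identity
\[
\Psi_{\ga_*\varrho}(\ga \be \ga^{-1}) = \Psi_{\be_*\varrho}(\ga) \, \Psi_{\varrho}(\be) \, \Psi_{\varrho}(\ga)^{-1}
\]
for any $\be, \ga \in \VB_k$. Under the standing hypothesis $\be_*\varrho = \varrho$, the first factor on the right simplifies to $\Psi_{\varrho}(\ga)$, so that $\Psi_{\ga_*\varrho}(\ga \be \ga^{-1})$ is literally conjugate to $\Psi_{\varrho}(\be)$ by $\Psi_{\varrho}(\ga) \in GL_{nk}(\cL_R)$.

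With this in hand, I would compute:
\[
H^{\ga_*\varrho}_{\ga\be\ga^{-1}}(s,t,q) = \det\!\bigl(\Psi_{\ga_*\varrho}(\ga\be\ga^{-1}) - I_{nk}\bigr) = \det\!\bigl(\Psi_{\varrho}(\ga)\,[\Psi_{\varrho}(\be) - I_{nk}]\,\Psi_{\varrho}(\ga)^{-1}\bigr),
\]
and then invoke the conjugation invariance of the determinant to conclude that this equals $\det(\Psi_{\varrho}(\be) - I_{nk}) = H^{\varrho}_{\be}(s,t,q)$.

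The argument is essentially formal once \eqref{psipropertytwo} is available, and there is no real obstacle; the only point to watch is that the hypothesis $\be_*\varrho = \varrho$ is exactly what converts $\Psi_{\be_*\varrho}(\ga)$ into $\Psi_{\varrho}(\ga)$, thereby producing a genuine conjugation rather than a more general similarity whose outer factors would fail to cancel. This hypothesis is the natural one in the braid setting: since $K = \wh{\be}$, the representation $\varrho$ originates on $\VG_K$ and hence factors through the relations $x_i = x_i^{\be}$, which forces $\be_*\varrho = \varrho$ on the nose.
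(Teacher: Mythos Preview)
Your proof is correct and follows essentially the same approach as the paper's own proof: apply \eqref{psipropertytwo}, use the hypothesis $\be_*\varrho = \varrho$ to replace $\Psi_{\be_*\varrho}(\ga)$ by $\Psi_{\varrho}(\ga)$, and then invoke conjugation invariance of the determinant. The paper presents this as a four-line chain of equalities, but the logic is identical to yours.
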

\begin{proof} 
We have
\begin{eqnarray*}
H^{\ga_* \varrho}_{\ga\be \ga^{-1}}(s,t,q) &=&  \det \left( \Psi_{\ga_*{\varrho}}(\ga\be \ga^{-1}) \,-\, I_{nk}\right) \\
&=&  \det \left( \Psi_{\be_*{\varrho}}(\ga) \Psi_{\varrho}(\be) {\Psi_{\varrho}(\ga)}^{-1}  \,-\,  I_{nk}\right)  \qquad \text{ by \eqref{psipropertytwo} } \\
&=&  \det \left( \Psi_{{\varrho}}(\ga) \Psi_{\varrho}(\be) {\Psi_{\varrho}(\ga)}^{-1}  \,-\,  I_{nk}\right)  \qquad \text{ since  $\be_*{\varrho}= {\varrho}$} \\
&=&  \det \left( \Psi_{\varrho}(\be)  \,-\,  I_{nk}\right) = H^{\varrho}_\be(s,t,q).
\end{eqnarray*}
\end{proof}

Next, we examine how $H^\varrho_\be(s,t,q)$ changes under a VM2 move.
Recall from Section \ref{section-5} that there is a natural inclusion $\VB_k \hookrightarrow \VB_{k+1}$ that we use to identify $\be \in \VB_k$ with its image under this map.
Thus $\be' =  \be \tau_k$ for a stabilization of virtual type,  $\be' =  \be \si_k^{-1}$ for a stabilization of positive type
and $\be' =  \be \si_k$ for a stabilization of negative type. 

\begin{proposition}
\label{twist-VM2-inv}
Let $\be \in \VB_k$ and assume $\be' \in \VB_{k+1}$ is obtained from $\be$ by a right stabilization.
Let $\varrho\colon F_{k+2} \to GL_n(R)$ be a representation satisfying $\be_*\varrho = \varrho$, and define $\varrho' \colon F_{k+3} \to GL_n(R)$ by setting
$\varrho'(s)=\varrho(s), \varrho'(q)=\varrho(q), \varrho'(x_i) =\varrho(x_i)$ for $1 \leq i \leq k$ and 
$$\varrho'(x_{k+1})=\begin{cases} \varrho(q^{-1} x_k q) & \text{if $\be' = \be \tau_k$,} \\
 \varrho(s^{-1} x_k s) & \text{if $\be' = \be \si_k^{\pm 1}$.} 
 \end{cases}$$
Then  $\varrho'$ satisfies $\be' _*\varrho' = \varrho'$. 
Furthermore, if  $\be'$ is obtained from $\be$ by a  stabilization of virtual or of positive type then
$$H^{\varrho'}_{\be'}(s,t,q) =  (-1)^n H^\varrho_\be(s,t,q).$$
If $\be'$ is obtained from $\be$ by a stabilization of negative type
then $$H^{\varrho'}_{\be'}(s,t,q) =  (-st)^n \det \varrho(x_{k}s) H^\varrho_\be(s,t,q).$$
\end{proposition}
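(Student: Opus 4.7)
The plan is to closely mirror the proof of Proposition \ref{VM2-inv} in the twisted setting, using the block matrix formulas for $\Psi_\varrho(\si_i^{\pm 1})$ and $\Psi_\varrho(\tau_i)$ from \eqref{eq:twisted-Burau}. Throughout, let $\theta_k$ denote the final generator of the braid word for $\be'$, namely $\theta_k=\tau_k$, $\si_k^{-1}$, or $\si_k$ according to the type of stabilization.

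I will first verify $\be'_*\varrho' = \varrho'$ by evaluating on the generators of $F_{k+3}$. On $s$ and $q$ the claim is automatic. On $x_i$ with $i\leq k$, the action of $\be'=\be\,\theta_k$ produces a word that, after applying the defining formulas for $\theta_k$, can be rewritten in $x_1,\ldots,x_k,s,q$ with the same $\varrho$-image; the definitions of $\varrho'(x_{k+1})$ have been arranged precisely so that this works (for instance, when $\theta_k = \si_k$ one has $\varrho'(s x_{k+1} s^{-1}) = \varrho(s)\varrho(s^{-1} x_k s)\varrho(s^{-1}) = \varrho(x_k)$), after which $\be_*\varrho = \varrho$ closes the argument. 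For $x_{k+1}$, the three defining formulas immediately yield $\varrho'(x_{k+1}^{\theta_k})=\varrho'(x_{k+1})$.

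Next I will use the twisted cocycle identity \eqref{psipropertyone} to split
$$\Psi_{\varrho'}(\be')= \Psi_{(\theta_k)_*\varrho'}(\be)\cdot \Psi_{\varrho'}(\theta_k).$$
Since $\be$ lies in the image of $\mu_k\colon\VB_k\hookrightarrow\VB_{k+1}$, the automorphism $\Phi(\be)$ fixes $x_{k+1}$ and each $x_i^{\be}$ involves only $x_1,\ldots,x_k,s,q$; consequently the Fox Jacobian $D(\Phi(\be))$ in $F_{k+3}$ is block-diagonal with a trivial $(k{+}1)$-st block. Combined with the observation that $(\theta_k)_*\varrho'$ agrees with $\varrho$ on $x_1,\ldots,x_k,s,q$ (verified by the same kind of calculation as above), this gives
$$\Psi_{(\theta_k)_*\varrho'}(\be) = \begin{bmatrix} \Psi_\varrho(\be) & 0 \\ 0 & I_n \end{bmatrix} =: \begin{bmatrix} A & 0 \\ 0 & I_n \end{bmatrix}.$$

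Multiplying by the explicit form of $\Psi_{\varrho'}(\theta_k)$, subtracting $I_{n(k+1)}$, and right-multiplying by a block lower-triangular matrix $E$ with $\det E=1$ (which adds $\varrho(q^{-1})q^{-1}$ times the $(k{+}1)$-st block column to the $k$-th in the virtual case, and $\varrho(s^{-1})s^{-1}$ times in the two $\si_k^{\pm 1}$ cases) produces a block upper-triangular matrix
$$\bigl(\Psi_{\varrho'}(\be')-I_{n(k+1)}\bigr)\,E = \begin{bmatrix} A-I_{nk} & * \\[0.3em] 0 & L \end{bmatrix},$$
where $L=-I_n$ for $\theta_k\in\{\tau_k,\si_k^{-1}\}$ and $L=-\varrho(s^{-1}x_k s^2)st$ for $\theta_k=\si_k$. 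Taking determinants and noting $\det\varrho(s^{-1}x_k s^2)=\det\varrho(x_k s)$ yields the three stated formulas. The main obstacle will be the positive-stabilization step, where the required cancellation hinges on the identity
$$\bigl(I_n-\varrho(s^{-1}x_k^{-1})s^{-1}t^{-1}\bigr) + \varrho(s^{-1}x_k^{-1}s)t^{-1}\,\varrho(s^{-1})s^{-1}= I_n,$$
forced by $\varrho$ being a homomorphism; an analogous cancellation in the negative case uses $\varrho(s^{-1}x_k s^2)\varrho(s^{-1})=\varrho(s^{-1}x_k s)$. Beyond these two identities, the argument reduces to careful bookkeeping in block-matrix notation, patterned directly on the untwisted proof.
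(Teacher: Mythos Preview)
Your proposal is correct and follows essentially the same approach as the paper's own proof: verify $\be'_*\varrho'=\varrho'$ on generators, use the cocycle identity \eqref{psipropertyone} together with $(\theta_k)_*\varrho'=\varrho'$ to split $\Psi_{\varrho'}(\be')$ as a block-diagonal factor times $\Psi_{\varrho'}(\theta_k)$, then right-multiply by the same elementary block matrix $E$ to obtain a block upper-triangular form. The cancellation identities you flag in the positive and negative cases are exactly the ones the paper uses (implicitly) when computing $(A'-I_{n(k+1)})E$.
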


\begin{proof} Let $\Psi_\varrho \colon \VB_{k} \to GL_{nk}\left(\cL_R\right)$ and $\Psi'_{\varrho'}\colon \VB_{k+1} \to GL_{n(k+1)}\left(\cL_R\right)$ be the twisted Burau representations,
and set $A = \Psi_\varrho(\be)$ and $A' = \Psi'_{\varrho'}(\be')$. 
We consider separately the case of a virtual, positive, and negative stabilization.

%%%%%%%%%%%%%%

\noindent
{\it Virtual stabilization.}~Applying the fundamental representation \eqref{eq-fundrep} to $\be' = \be \tau_k$, we have
$$x_i^{\be'}=\begin{cases} x_i^\be & \text{if $1 \leq i \leq k-1$,} \\
 q x_{k+1}^\be q^{-1} & \text{if $i=k$,} \\
 q^{-1} x_{k}^\be q & \text{if $i=k+1$.}
 \end{cases}$$
Assume $\be_*\varrho  = \varrho.$ That $\varrho'\left(x_i^{\be'}\right) =\varrho'(x_i)$
for $1\leq i \leq k-1$ follows by definition, so we only need to check $i=k$ and $i= k+1.$

For $i=k,$ we have
$$\varrho'\left(x_k^{\be'}\right) = \varrho'\left(q x_{k+1} q^{-1}\right) = \varrho(x_k) = \varrho'(x_k).$$ 
For $i=k+1,$ we have
$$\varrho'\left(x_{k+1}^{\be'}\right) = \varrho'\left(q^{-1} x_{k}^\be q \right) = \varrho\left(q^{-1} x_{k}^\be q\right) = \varrho\left(q^{-1} x_{k} q\right) = \varrho'(x_{k+1}).$$ 
Hence  $\be'_*\varrho'  = \varrho'$.
Also,  by direct calculation, $(\tau_k)_*\varrho' = \varrho' $.

Writing $A$ and $A'$ as block matrices with entries in $M_n\left(\cL_R\right),$ we see that they are related by:
\begin{equation*}
\begin{split}
A' &= \Psi'_{\varrho'}(\be  \tau_k) = \Psi'_{(\tau_k)_*\varrho'}(\be) \,\, \Psi'_{\varrho'}( \tau_k) = \Psi'_{\varrho'}(\be) \, \Psi'_{\varrho'}( \tau_k) \\
&=\begin{bmatrix}
A & \hspace{0.3em}  {\bf 0}  \\[0.3em]
{\bf 0} & \hspace{0.3em} I_n
\end{bmatrix} 
\begin{bmatrix}
 I_{n(k-1)} &  \cdots  & {\bf 0} \\ 
\vdots & 0 & \varrho(q)q \\
{\bf 0} & {\,\varrho(q^{-1})q^{-1}} & 0
\end{bmatrix}.
\end{split}
\end{equation*}

%%%%

Let $A_k$ denote the $k$-th block  column of $A$ (so $A_k$ has size $kn \times n$ when viewed as matrix over $\cL_R$), and let $E$ be the elementary block matrix
$$
E = \begin{bmatrix}
I_{n(k-1)} &  \cdots  & {\bf 0} \\ 
\vdots & I_n & 0 \\
{\bf 0} & {\,\varrho(q^{-1}) q^{-1}} & I_n
\end{bmatrix}.
$$
Observe that
$$
 \left( A' - I_{n(k+1)} \right) E= 
\begin{bmatrix}
A  - I_{nk}      &  A_k \varrho(q) q  \\[0.5em]
{\bf 0} & -I_n
\end{bmatrix}.
$$
Viewing the above block matrix as a matrix in $M_{n(k+1)}\left(\cL_R\right)$
and using the fact that $\det(E)=1$, we  find that
\begin{eqnarray*}
H^{\varrho'}_{\be'}(s,t,q) &=& \det\left(A' - I_{n(k+1)}\right) =   \det\left(\left(A' - I_{n(k+1)}\right)E\right) \\
&=&   \det\left(  
\begin{bmatrix}  A  - I_{nk}      &  A_k \varrho(q) q  \\[0.5em]  
{\bf 0} & -I_n \end{bmatrix} \right) = (-1)^n\det\left( A  - I_{nk} \right)\\
& =& (-1)^n H^{\varrho}_{\be}(s,t,q).
\end{eqnarray*}
\smallskip

%%%%%%%%%%%%%%

\noindent
{\it Positive stabilization.}~Applying the fundamental representation \eqref{eq-fundrep} to $\be' = \be \si_k^{-1}$, we have
$$x_i^{\be'}=\begin{cases} x_i^\be & \text{if $1 \leq i \leq k-1$,} \\
 s^{-1} \left(x_{k}^\be\right)^{-1} s x_{k+1} x_k^\be & \text{if $i=k$,} \\
 s^{-1} x_k^\be s  & \text{if $i=k+1$.}
 \end{cases}$$
Assume $\be_* \varrho  = \varrho$.
That $\varrho'\left(x_i^{\be'}\right) =\varrho'(x_i)$
for $1\leq i \leq k-1$ follows by definition, so we only need to check $i=k$ and $i= k+1.$

For $i=k,$ we have
\begin{equation*}
\begin{split}
\varrho'\left(x_k^{\be'}\right) &= \varrho'\left(s^{-1} \left(x_{k}^\be\right)^{-1} s x_{k+1} x_k^\be\right) \\
&= \varrho\left(s^{-1} \left(x_{k}^\be\right)^{-1} s\right) \varrho'\left( x_{k+1}\right) \varrho\left( x_k^\be\right) \\
&= \varrho\left(s^{-1} x_{k}^{-1} s\right) \varrho\left( s^{-1} x_{k}s \right) \varrho\left( x_k\right) 
=\varrho(x_k) = \varrho'(x_k).
\end{split}
\end{equation*}

For $i=k+1,$ we have
$$\varrho'\left(x_{k+1}^{\be'}\right) = \varrho'\left(s^{-1} x_{k}^\be s \right) 
= \varrho\left( s^{-1} x_{k}^\be s \right)  = \varrho'\left( s^{-1} x_{k} s\right) =\varrho'\left( x_{k+1}\right) .$$
Thus it follows that $\be' _*\varrho' = \varrho'$.
Also,  by direct calculation, $( \si_k^{-1})_* \varrho'  = \varrho'$.

Writing $A$ and $A'$ as block matrices with entries in $M_n\left(\cL_R\right),$ using equation \eqref{eq:twisted-Burau}, the fact that $\varrho'(x_{k+1})=\varrho(s^{-1} x_{k} s)$ and the assumption that $\varrho\left(x_k^\be\right) = \varrho(x_k),$ we see that

\begin{equation*}
\begin{split}
A' &= \Psi'_{\varrho'}(\be \si_k^{-1}) = \Psi'_{ (\si_k^{-1})_* \varrho'}(\be) \,\,  \Psi'_{\varrho'}(\si_k^{-1}) = \Psi'_{\varrho'}(\be) \,  \Psi'_{\varrho'}(\si_k^{-1}) \\
&=\begin{bmatrix}
A & \hspace{0.3em} {\bf 0}  \\[0.3em]
{\bf 0} & \hspace{0.3em} I_n
\end{bmatrix} 
\begin{bmatrix}
I_{n(k-1)} &  \cdots  & {\bf 0} \\ 
\vdots & I_n-\varrho(s^{-1}x_k^{-1})(st)^{-1}& \varrho(s^{-1}x_k^{-1}s)t^{-1} \\
{\bf 0} & \varrho(s^{-1}) s^{-1} & 0
\end{bmatrix}.
\end{split}
\end{equation*}

Let $A_k$ denote the $k$-th block column of $A$ and let $E$ be the elementary block matrix
$$
E = \begin{bmatrix}
I_{n(k-1)} &  \cdots  & {\bf 0} \\ 
\vdots & I_n & 0 \\
{\bf 0} & {\,\varrho(s^{-1}) s^{-1}} & I_n
\end{bmatrix}.
$$
Observe that
$$ \left( A' - I_{n(k+1)} \right) E
= \begin{bmatrix}
A  - I_{nk}      &  A_k \varrho(s^{-1}x_k^{-1}s)t^{-1}  \\[0.5em]
{\bf 0} & -I_n
\end{bmatrix}.$$
Viewing  these $(k+1) \times (k+1)$ block matrices with entries in $M_n\left(\cL_R\right)$ as matrices in $M_{n(k+1)}\left(\cL_R\right)$,
and using the fact that $\det(E)=1$, it follows that
\begin{eqnarray*}
H^{\varrho'}_{\be'}(s,t,q) &=& \det\left(A' - I_{n(k+1)}\right) =   \det\left(\left(A' - I_{n(k+1)}\right)E\right) \\
&=&   \det\left(  
\begin{bmatrix}  A  - I_{nk}      &  A_k  \varrho(x_k^{-1}) t^{-1} \\[0.5em]  
{\bf 0} & -I_n \end{bmatrix} \right) = (-1)^n\det\left( A  - I_{nk} \right) \\
 &=& (-1)^n H^{\varrho}_{\be}(s,t,q).
\end{eqnarray*}
\smallskip

%%%%%%%%%%%%%%

\noindent
{\it Negative stabilization.}~Applying the fundamental representation \eqref{eq-fundrep} to $\be' = \be \si_k$, we have
$$x_i^{\be'}=\begin{cases} x_i^\be & \text{if $1 \leq i \leq k-1$,} \\
 s x_{k+1}s^{-1} & \text{if $i=k$,} \\
 x_{k+1} x_k^\be s x_{k+1}^{-1} s^{-1} & \text{if $i=k+1$.}
 \end{cases}$$
Assume $\be_*\varrho  = \varrho$. That $\varrho'\left(x_i^{\be'}\right) =\varrho'(x_i)$
for $1\leq i \leq k-1$ follows by definition, so we only need to check $i=k$ and $i= k+1.$

For $i=k,$ we have
$$\varrho'\left(x_k^{\be'}\right) = \varrho'\left(s x_{k+1} s^{-1}\right) = \varrho(x_k) = \varrho'(x_k).$$ 
For $i=k+1,$ we have
\begin{equation*}
\begin{split}
\varrho'\left(x_{k+1}^{\be'}\right) &= \varrho'\left(x_{k+1} x_k^\be s x_{k+1}^{-1} s^{-1} \right) \\
&= \varrho'\left( x_{k+1}\right) \varrho\left( x_k^\be\right) \varrho'\left( s x_{k+1}^{-1} s^{-1}\right) = \varrho'\left( x_{k+1}\right) \varrho\left( x_k \right) \varrho\left( x_{k}^{-1}\right)=\varrho'\left( x_{k+1}\right) .
\end{split}
\end{equation*}
Thus it follows that $\be'_* \varrho'  = \varrho'$.
Also,  by direct calculation, $(\si_k)_* \varrho'  = \varrho'$.

Writing $A$ and $A'$ as block matrices with entries in $M_n\left(\cL_R\right),$ using equation \eqref{eq:twisted-Burau} and the fact that $\varrho'(s x_{k+1}^{-1}) = \varrho(x_{k}^{-1}s),$ we see that  

\begin{equation*}
\begin{split}
A' &= \Psi'_{\varrho'}(\be \si_k) = \Psi'_{(\si_k)_*\varrho'} (\be) \, \Psi'_{\varrho'}(\si_k) = \Psi'_{\varrho'}(\be) \,  \Psi'_{\varrho'}(\si_k) \\
&=\begin{bmatrix}
A & \hspace{0.3em} {\bf 0} \\[0.3em]
{\bf 0} & \hspace{0.3em} I_n
\end{bmatrix} 
\begin{bmatrix}
I_{n(k-1)} &  \cdots  & {\bf 0} \\ 
\vdots & 0 & \varrho(s)s \\
{\bf 0} & \varrho'(x_{k+1}) t & I_n-\varrho'(x_{k+1} s)st
\end{bmatrix}.
\end{split}
\end{equation*}

Let $A_k$ denote the $k$-th block column of $A$ and let $E$ be the elementary block matrix
$$
E = \begin{bmatrix}
I_{n(k-1)} & \cdots & {\bf 0} \\ 
\vdots & I_n & 0 \\
{\bf 0} & {\,\varrho(s^{-1}) s^{-1}} & I_n
\end{bmatrix}.
$$
Observe that
$$
 \left( A' - I_{n(k+1)} \right) E= 
\begin{bmatrix}
A  - I_{nk}      &  A_k \varrho(s) s  \\[0.5em]
{\bf 0} & -\varrho'(x_{k+1} s)st
\end{bmatrix}.
$$
Viewing the above block matrix  as a matrix in $M_{n(k+1)}\left(\cL_R\right)$
and using the fact that $\det(E)=1$ and $\det \varrho'(x_{k+1}s) = \det \varrho(s^{-1}x_{k}s^2) =\det \varrho(x_{k}s),$ it follows that
\begin{eqnarray*}
H^{\varrho'}_{\be'}(s,t,q) &=& \det\left(A' - I_{n(k+1)}\right) =   \det\left(\left(A' - I_{n(k+1)}\right)E\right)     \\
&=&   \det\left(  
\begin{bmatrix}  A  - I_{n(k+1)}      &  A_n \varrho(s) \\[0.5em]  
{\bf 0} & -\varrho'(x_{k+1} s) st \end{bmatrix} \right) \\
&=& (-st)^n \det\left(\varrho'(x_{k+1}s)\right) \det\left( A  - I_{nk} \right) \\
&=& -(st)^n \det \varrho(x_{k}s) H^\varrho_{\be}(s,t,q).
\end{eqnarray*}
\end{proof}

%%%%%%%%%%%%

Next, we show that $H^\varrho_\be(s,t,q)$ is invariant under VM3 moves.

\begin{proposition}
\label{twist-VM3-inv}
Suppose $\be_1 \in \VB_{k+1}$, and $\varrho_1  \colon F_{k+3} \to GL_n(R)$ is a representation such that  $(\be_1)_*\varrho_1   = \varrho_1$. 
\begin{enumerate}[\hspace{12pt}]
\item[(i)] If $\be_2 \in \VB_{k+1}$ is related to $\be_1$ by a right virtual exchange move,  then we define $\varrho_2  \colon F_{k+3} \to GL_n(R)$ by setting $\varrho_2(s)=\varrho_1(s), \varrho_2(q)=\varrho_1(q),\varrho_2(x_i)=\varrho_1(x_i)$ for all $1 \leq i \leq k$ and $\varrho_2(q x_{k+1}q^{-1}) =\varrho_1(s^{-1} x_{k}^{-1} s x_{k+1} x_k).$ 
\item[(ii)] If $\be_2 \in \VB_{k+1}$ is related to \; $\be_1$ \; by a left virtual exchange move,  then we define \; $\varrho_2  \colon F_{k+3} \to GL_n(R)$ by setting $\varrho_2(s)=\varrho_1(s), \varrho_2(q)=\varrho_1(q),\varrho_2(x_i)=\varrho_1(x_i)$ for all $2 \leq i \leq k+1$ and $\varrho_2(q^{-1} x_{1}q) =\varrho_1(s^{-1} x_{1} s).$
\end{enumerate}

In either case, we have $(\be_2)_*\varrho_2 = \varrho_2$ and  $H^{\varrho_1}_{\be_1}(s,t,q) =  H^{\varrho_2}_{\be_2}(s,t,q)$.
\end{proposition}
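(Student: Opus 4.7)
The plan is to upgrade the matrix-algebraic proof of Proposition \ref{VM3-inv} to the twisted setting by replacing scalar entries with $n \times n$ blocks. I will treat the right virtual exchange move in detail; the left case is handled analogously, using the identity $\det(XY - I_{nk}) = \det(YX - I_{nk})$ to swap factors, exactly as in the untwisted proof. Throughout, write $\be_1 = \al \si_k \be \si_k^{-1}$ and $\be_2 = \al \tau_k \be \tau_k$ with $\al, \be \in \VB_k \subset \VB_{k+1}$.

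Step 1 is to check $(\be_2)_*\varrho_2 = \varrho_2$. Since $\al, \be \in \VB_k$ act inside $\langle x_1, \ldots, x_k, s, q \rangle$ and fix $x_{k+1}$, and since $s, q$ are fixed by every braid automorphism, the hypothesis $(\be_1)_*\varrho_1 = \varrho_1$ restricted to these generators immediately transfers to invariance of $\varrho_2$ on $x_1, \ldots, x_{k-1}, s, q$. For $x_k$ and $x_{k+1}$, the action of $\tau_k$ is given by \eqref{eq-fundrep}, and the defining relation $\varrho_2(qx_{k+1}q^{-1}) = \varrho_1(s^{-1}x_k^{-1}sx_{k+1}x_k)$ is chosen precisely so that $\varrho_2(x_k^{\be_2}) = \varrho_2(x_k)$ and $\varrho_2(x_{k+1}^{\be_2}) = \varrho_2(x_{k+1})$ hold once $(\be_1)_*\varrho_1 = \varrho_1$ is invoked at the corresponding step.

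Step 2 is the matrix computation. Expanding
\[
\Psi_{\varrho_1}(\be_1) = \Psi_{\varrho^{(1)}_1}(\al)\,\Psi_{\varrho^{(2)}_1}(\si_k)\,\Psi_{\varrho^{(3)}_1}(\be)\,\Psi_{\varrho_1}(\si_k^{-1})
\]
by means of \eqref{psipropertyonemultiple}, and similarly expanding $\Psi_{\varrho_2}(\be_2)$ with $\tau_k$ replacing $\si_k$ and $\si_k^{-1}$, each $\al$ or $\be$ factor has the block-diagonal form $A \oplus I_n$ or $B \oplus I_n$ (because $\al$ and $\be$ fix $x_{k+1}$), while the middle factors act only on the final two block rows and columns via \eqref{eq:twisted-Burau}. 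Now introduce twisted analogues $P, Q, E, F_1, F_2$ of the elementary matrices from the untwisted proof, with each scalar replaced by the appropriate $n \times n$ block built from $\varrho(s), \varrho(q), \varrho(x_k), \varrho(x_{k+1})$. Setting
\[
M_1 = F_1^{-1}\bigl(\Psi_{\varrho_1}(\be_1) - I_{n(k+1)}\bigr)E, \qquad M_2 = F_2\bigl(\Psi_{\varrho_2}(\be_2) - I_{n(k+1)}\bigr)F_2^{-1},
\]
a blockwise computation paralleling \eqref{M-1} and \eqref{M-2} will show $M_1 = M_2$. Tracking determinants (with $\det(F_1) = \det(E)$ up to sign, and $F_2$-conjugation determinant-preserving) then yields $H^{\varrho_1}_{\be_1}(s,t,q) = H^{\varrho_2}_{\be_2}(s,t,q)$.

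The main technical obstacle is noncommutativity. In the untwisted proof, $s, q, t$ commute freely, and the equality $M_1 = M_2$ reduces to verifiable scalar identities; in the twisted setting, $\varrho(s), \varrho(q), \varrho(x_k), \varrho(x_{k+1})$ do not commute, so the block products in $PB'P^{-1}$ and $QB'Q$ look genuinely different at the entry level. The equality $M_1 = M_2$ at the block level hinges precisely on the hypothesis $\varrho_2(qx_{k+1}q^{-1}) = \varrho_1(s^{-1}x_k^{-1}sx_{k+1}x_k)$: this is exactly what forces the affected block entries of $M_1$ and $M_2$ to coincide. For the left virtual exchange, the analogous identity $\varrho_2(q^{-1}x_1 q) = \varrho_1(s^{-1}x_1 s)$ plays the identical role.
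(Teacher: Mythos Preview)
Your outline follows the paper's approach closely, but there is a substantive gap in Step 2 that you have not addressed. When you expand $\Psi_{\varrho_1}(\be_1)$ and $\Psi_{\varrho_2}(\be_2)$ via \eqref{psipropertyonemultiple}, the $\al$-factor in the first expansion is $\Psi_{(\si_k\be\si_k^{-1})_*\varrho_1}(\al)$, while in the second it is $\Psi_{(\tau_k\be\tau_k)_*\varrho_2}(\al)$; similarly the $\be$-factors are $\Psi_{(\si_k^{-1})_*\varrho_1}(\be)$ and $\Psi_{(\tau_k)_*\varrho_2}(\be)$. You simply assert that both have the block form $A' = A\oplus I_n$ and $B' = B\oplus I_n$, but for the argument to work you need the \emph{same} $A$ and the \emph{same} $B$ on both sides. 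This requires showing that the pulled-back representations agree on $x_1,\ldots,x_k$: specifically, that $(\si_k\be\si_k^{-1})_*\varrho_1$ and $(\tau_k\be\tau_k)_*\varrho_2$ restrict identically to $\langle x_1,\ldots,x_k,s,q\rangle$, and likewise for $(\si_k^{-1})_*\varrho_1$ and $(\tau_k)_*\varrho_2$. The paper handles the first pair by rewriting each as $(\al^{-1})_*\varrho_j$ using $(\be_j)_*\varrho_j=\varrho_j$, and the second pair by observing that the defining relation for $\varrho_2$ is literally the statement $(\si_k^{-1})_*\varrho_1(x_j) = (\tau_k)_*\varrho_2(x_j)$ for all $j$. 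Without these identifications you do not have well-defined matrices $A$ and $B$ to compare, and the claim $M_1=M_2$ is not even formulated correctly.

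A secondary issue: in the twisted setting $\Psi_{\varrho}(\si_k^{-1})$ is not the inverse of $\Psi_{\varrho'}(\si_k)$, because the representations at those two slots differ. Your notation ``$PB'P^{-1}$'' suggests you may be treating them as inverses; the paper uses distinct matrices $P$ and $P'$ for the $\si_k$ and $\si_k^{-1}$ factors, computed from the correct pulled-back representations via Remark~\ref{rem-mult}.
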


\begin{proof} 
Let $\al$ and $\be$ be as in Figure \ref{Kamada-exchange}.
We consider separately the cases of right and left virtual exchange moves.

\noindent
{\it Right virtual exchange move.}

We have  $\be_1 = \al  \, \si_k \, \be  \, \si_k^{-1}  $ and  $\be_2 = \al  \, \tau_k  \, \be  \, \tau_k$.

Let $\theta$ be the automorphism of $F_{k+3}$ given by
$s^\theta = s$, $q^\theta =q$,  $x_i^\theta = x_i$ for $1\leq i \leq k$, and
$x_{k+1}^\theta = q^{-1} s^{-1} x_{k}^{-1} s x_{k+1} x_k q$.
Note that $\theta_*\varrho_1 = \varrho_2$.
The assumption that $(\be_1)_*\varrho_1   = \varrho_1$ is equivalent to $\varrho_1$ factoring
through $\pi_{\be_1}  \colon F_{k+3} \to  \VG_{\be_1}$
(see the introduction to Section \ref{section-8} for this notation).
The automorphism $\theta$ induces an isomorphism ${\bar \theta} \colon \VG_{\be_2} \to  \VG_{\be_1}$
such that 
\begin{center}
\begin{tikzcd} 
F_{k+3} \arrow{r}{\theta} \arrow{d}{\pi_{\beta_2}}
&F_{k+3} \arrow{d}{\pi_{\beta_1}} \\
  \VG_{\beta_2}   \arrow{r}{\bar{\theta}} &\VG_{\beta_1}, 
\end{tikzcd} 
\end{center}
see Figure \ref{Braid-Exchange-Loop} for a geometric proof of this fact. The diagram in Figure \ref{Braid-Exchange-Loop} is obtained as a partial closure of either of the two braids involved in the right exchange move (see Figure \ref{Kamada-exchange}) after applying ($r2$) or ($v2$) of Figure \ref{VRM}, namely the real or virtual Reidemeister two move.
It follows that $\varrho_2$ factors through $\pi_{\be_2}$ and so $(\be_2)_*\varrho_2   = \varrho_2$.

\begin{figure}[ht]
\centering
\includegraphics[scale=1.20]{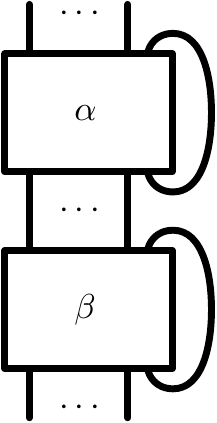}
\caption{The diagram obtained by closing up the two strands on the right in either braid appearing in the right virtual exchange move.}
\label{Braid-Exchange-Loop}
\end{figure}

An application of  \eqref{psipropertyonemultiple} yields
\begin{equation} \label{REM-beta1beta2}
\begin{split}
\Psi_{\varrho_1}(\be_1) &= \Psi_{(\si_k \be \si_k^{-1})_*{\varrho_1}} \left(\al \right) ~~ \Psi_{(\be \si_k^{-1})_*{\varrho_1}}\left(\si_k\right) ~~ \Psi_{(\si_k^{-1})_*\varrho_1}\left(\be \right) ~~ \Psi_{\varrho_1}\left(\si_k^{-1} \right),  \\
\Psi_{\varrho_2}(\be_2) &= \Psi_{(\tau_k \be \tau_k)_*{\varrho_2}}\left(\al\right)  ~ \Psi_{(\be \tau_k)_* {\varrho_2}}\left(\tau_k\right)  ~~ \Psi_{(\tau_k)_*\varrho_2 }\left(\be\right)  ~~\Psi_{\varrho_2}\left(\tau_k\right).
\end{split}
\end{equation}

Since $\si_k \be \si_k^{-1}= \al^{-1}\be_1 $ and $(\be_1)_* \varrho_1 =\varrho_1$, we have
$(\si_k \be \si_k^{-1})_*{\varrho_1} = (\al^{-1}\be_1)_* \varrho_1 = (\al^{-1})_*\varrho_1$.
Similarly, $(\be_2)_* \varrho_2 =\varrho_2$ implies $(\tau_k \be \tau_k)_*{\varrho_2} = (\al^{-1})_*\varrho_2$.
Hence
$
\Psi_{(\si_k \be \si_k^{-1})_*{\varrho_1}} \left(\al \right)
=  \Psi_{(\al^{-1})_* \varrho_1} \left( \al \right)
$
and 
$
\Psi_{(\tau_k \be \tau_k)_*{\varrho_2}} \left(\al \right)
=  \Psi_{(\al^{-1})_* \varrho_2} \left( \al \right)
$.

Consider the injective ``right stabilization'' homomorphism
$\mu \colon F_{k+2} = \langle x_1, \ldots, x_k, s,q \rangle \hookrightarrow F_{k+3}$ that
takes a generator to a generator of the same name.
Since $\varrho_1$ and $\varrho_2$ agree on the generators $x_j$  for $1 \leq j \leq k$ we
have $\mu_* \varrho_1 = \mu_* \varrho_2$  (where $\mu_* \varrho_j$ denotes the composite of $\varrho_j$ and $\mu$).
Let $\varrho = \mu_* \varrho_1$.
Note that $\mu_*(\al^{-1})_* \varrho_j = (\al^{-1})_* \varrho$ for $j=1,2$.
Let $A = \Psi_{(\al^{-1})_* \varrho} \left( \al \right)$.
Then 
\[
 \Psi_{(\si_k \be \si_k^{-1})_*{\varrho_1}} \left(\al \right) ~=~
\begin{bmatrix}
A   & \hspace{0.3em} {\bf 0}  \\[0.3em]
{\bf 0}   & \hspace{0.3em} I_n
\end{bmatrix}
~=~  \Psi_{(\tau_k \be \tau_k)_*{\varrho_2}} \left(\al \right).
\]
Call the above matrix $A'$.

The condition defining $\varrho_2$ asserts that $\left({\si_k^{-1}}\right)_*\varrho_1(x_j) = {\left(\tau_k\right)}_*\varrho_2(x_j)$ for $1 \leq j \leq k$
and so $\mu_*\left({\si_k^{-1}}\right)_*\varrho_1 =  \mu_*\left({\tau_k}\right)_*\varrho_2$.
Let ${\wh \varrho} = \mu_*\left({\si_k^{-1}}\right)_*\varrho_1$ and
let $B = \Psi_{{\wh \varrho}}(\be)$.  We conclude that
\[
 \Psi_{(\si_k^{-1})_*{\varrho_1}} \left(\be \right) ~=~
\begin{bmatrix}
B   & \hspace{0.3em} {\bf 0}  \\[0.3em]
{\bf 0}   & \hspace{0.3em} I_n
\end{bmatrix}
~=~  \Psi_{(\tau_k)_*{\varrho_2}} \left(\be \right).
\]
Call the above matrix $B'$.

Let $P' = \Psi_{\varrho_1}\left(\si_k^{-1} \right)$ and $P = \Psi_{(\be \si_k^{-1})_*{\varrho_1}}\left(\si_k\right)$.  
We have $x_k^{\be \si_k^{-1}} = s^{-1} x_k^{-1}s x_{k+1} x_k $ and $x_{k+1}^{\be \si_k^{-1}} = s^{-1}x_k s$, 
and applying Remark \ref{rem-mult} shows that $P'$ and $P$ are given by
$$
P' = \begin{bmatrix}
I_{n(k-1)} &  \cdots  & {\bf 0} \\ 
\vdots &  \varrho_1(s^{-1} x_k^{-1} s x_{k+1})-\varrho_1( s^{-1} x_{k}^{-1})s^{-1} t^{-1} & \varrho_1( s^{-1} x_{k}^{-1}s) t^{-1}  \\
{\bf 0} & \varrho_1(s^{-1})s^{-1} & 0
\end{bmatrix},
$$
and
$$
P  = \begin{bmatrix}
I_{n(k-1)} &  \cdots  & {\bf 0} \\ 
\vdots & 0 & \varrho_1(s) s  \\
{\bf 0} & \varrho_1(s^{-1} x_{k}s)t & I_n-\varrho_1(x_{k+1}s) st
\end{bmatrix}.
$$
Let $Q = \Psi_{\varrho_2}\left(\tau_k \right)$.  By Remark \ref{rem-mult}, we have $Q = \Psi_{(\be \tau_k)_*{\varrho_2}}\left(\tau_k\right)$  and so 
$$
Q = \begin{bmatrix}
I_{n(k-1)} &  \cdots & {\bf 0} \\ 
\vdots & 0 & \varrho_2(q) q \\
{\bf 0} & \varrho_2(q^{-1}) q^{-1} & 0
\end{bmatrix}.
$$
With the names given to the various matrices displayed above, we have by \eqref{REM-beta1beta2} that
$\Psi_{\varrho_1}(\be_1) = A' P B' P'$ and
$\Psi_{\varrho_2}(\be_2) = A' Q B' Q$.
Thus 
$
H^{\varrho_1}_{\be_1}  =  \det\left(A'P B' P' - I_{n(k+1)}\right)
$
and
$
H^{\varrho_2}_{\be_2}   = \det\left(A'Q B' Q- I_{n(k+1)}\right)
$.

Let
$$ E = \begin{bmatrix}
I_{n(k-1)} &  \cdots  & {\bf 0} \\ 
\vdots   & I_n & 0 \\
{\bf 0} & -\varrho_1(x_{k+1})t+\varrho_1(s^{-1})s^{-1} & \varrho_1(s^{-1}x_k s)t
\end{bmatrix},$$

$$F_1= \begin{bmatrix}
I_{nk}        & {\bf 0}  \\[0.3em]
{\bf 0}     & \varrho_1(s^{-1}x_k s)t
\end{bmatrix},
\quad \text{ and } \quad
F_2 = \begin{bmatrix}
I_{nk}  & \hspace{0.3em} {\bf 0}  \\[0.3em]
{\bf 0}    & \hspace{0.3em} \varrho_2(q) q
\end{bmatrix}.$$

Consider the matrices
$$
M_1 = F_1^{-1} \left(A' P B' P'  - I_{n(k+1)}\right) E   \quad \text{   and   }  \quad
M_2 =  F_2 \left(A' Q B' Q - I_{n(k+1)}\right) F_2^{-1}.
$$

Since $\det(E)=t^n \det \varrho_1(x_{k})=\det(F_1)$, we have $\det(M_1) = H^{\varrho_1}_{\be_1}$.  Also, $\det(M_2) = H^{\varrho_2}_{\be_2}$.

%%%%%%%%%%%%%%%%%%%%%

\noindent{\bf Claim:} $M_1 = M_2$. \\
\noindent
Let $A_k$ and $B_k$ denote the $k$-th block columns of $A$ and $B$, respectively, and let
$\wh A_k$ and $\wh B_k$ be the $nk\times n(k-1)$ matrices obtained  by removing the $k$-th block columns from $A$ and $B$, respectively.

%%%%%%%%%%%%%
One then finds that
\begin{eqnarray*}
\left( F_1^{-1} A' \right)P &=&
\begin{bmatrix}
A     & \hspace{0.3em}  {\bf 0}  \\[0.3em]
{\bf 0}     & \hspace{0.3em} \varrho_1(s^{-1} x_{k}^{-1}s)t^{-1}
\end{bmatrix}
\begin{bmatrix}
I_{n-1} &  \cdots  & {\bf 0} \\ 
\vdots & 0 & \varrho_1(s) s \\
{\bf 0} & \varrho_1(s^{-1} x_{k}s)t & I_n-\varrho_1(x_{k+1} s)st
\end{bmatrix} \\
&=& 
\begin{bmatrix}
{\wh A}_k & {\bf 0}   & A_k \varrho_1(s)s\\[0.5em] 
{\bf 0} & I_n & \varrho_1(s^{-1} x_{k}^{-1} s)t^{-1}-\varrho_1(s^{-1} x_{k}^{-1}s x_{k+1}s)s
\end{bmatrix} \quad \text{ and } \\ 
%%%%%%%%%%
B'\left(P' E\right)  
&= &
\begin{bmatrix}
B   & \hspace{0.3em} {\bf 0}  \\[0.3em]
{\bf 0}   & \hspace{0.3em} I_n
\end{bmatrix}
\begin{bmatrix}
I_{n(k-1)} &  \cdots  & {\bf 0} \\ 
\vdots & 0 & I_n \\
{\bf 0} & \varrho_1(s^{-1})s^{-1} & 0
\end{bmatrix} 
=
\begin{bmatrix}
{\wh B}_k & {\bf 0}   & B_k\\[0.5em] 
{\bf 0} & \varrho_1(s^{-1}) s^{-1} & 0
\end{bmatrix}.
\end{eqnarray*}

Furthermore, we have
\begin{eqnarray*}F_1^{-1} E &=&
\begin{bmatrix}
I_{nk}        & {\bf 0}  \\[0.5em]
{\bf 0}     & \varrho_1(s^{-1} x_{k}^{-1}s) t^{-1}
\end{bmatrix}
\begin{bmatrix}
I_{n(k-1)} &  \cdots  & {\bf 0} \\ 
\vdots   & I_n & 0 \\
{\bf 0} & -\varrho_1(x_{k+1})t+\varrho_1(s^{-1})s^{-1} & \varrho_1(s^{-1}x_k s)t
\end{bmatrix} \\
&=&
\begin{bmatrix}
I_{n(k-1)} &  \cdots  & {\bf 0} \\ 
\vdots   & I_n & 0 \\
{\bf 0} & \varrho_1(s^{-1} x_{k}^{-1}) s^{-1}t^{-1}-\varrho_1(s^{-1} x_{k}^{-1} s x_{k+1}) & I_n
\end{bmatrix}.
\end{eqnarray*}

It follows that
\begin{equation} \label{t-M-1}
\begin{split}
M_1 &=  \left( F_1^{-1} A' P \right) \left( B' P' E\right) ~-~ F_1^{-1} E \\
&= \begin{bmatrix}
{\wh A}_k &0   & A_k \varrho_1(s)s\\[0.5em] 
{\bf 0} & I_n & \varrho_1(s^{-1} x_{k}^{-1} s)t^{-1}-\varrho_1(s^{-1} x_{k}^{-1}s x_{k+1}s)s
\end{bmatrix}
\begin{bmatrix}
{\wh B}_k &0   &  B_k\\[0.5em] 
{\bf 0} & \varrho_1(s^{-1}) s^{-1} & 0
\end{bmatrix} \\
& \qquad -
\begin{bmatrix}
I_{n(k-1)} &  \cdots  & {\bf 0} \\ 
\vdots   & I_n & 0 \\
{\bf 0} & \varrho_1(s^{-1} x_{k}^{-1}) s^{-1}t^{-1}-\varrho_1(s^{-1} x_{k}^{-1} s x_{k+1}) & I_n
\end{bmatrix}.
\end{split}
\end{equation}

%%%%%%%%%%%%%%%%%%%%%%%%%%%%
An easy computation shows that 

\begin{eqnarray*}
\left( F_2 A'\right) Q &=&
\begin{bmatrix}
A  & \hspace{0.3em} {\bf 0}  \\[0.3em]
{\bf 0}   & \hspace{0.3em} \varrho_2(q) q
\end{bmatrix}
\begin{bmatrix}
I_{n(k-1)} &  \cdots & {\bf 0} \\ 
\vdots & 0 & \varrho_2(q) q \\
{\bf 0} & \varrho_2(q^{-1}) q^{-1} & 0
\end{bmatrix}
=
\begin{bmatrix}
{\wh A}_k &\hspace{0.3em} {\bf 0}   & \hspace{0.3em}A_k \varrho_2(q) q\\[0.5em] 
{\bf 0} &\hspace{0.3em} I_n &\hspace{0.3em} 0
\end{bmatrix}
 \quad \text{ and } \\
B' \left(Q F_2^{-1}\right) &=& \begin{bmatrix}
B    & \hspace{0.3em} {\bf 0}  \\[0.3em]
{\bf 0}    & \hspace{0.3em} I_n
\end{bmatrix}
\begin{bmatrix}
I_{n(k-1)} &  \cdots & {\bf 0} \\ 
\vdots & 0 & I_n \\
{\bf 0} & \varrho_2(q^{-1}) q^{-1} & 0
\end{bmatrix}
=
\begin{bmatrix}
{\wh B}_k & {\bf 0}   & B_k\\[0.5em] 
{\bf 0} & \varrho_2(q^{-1}) q^{-1} & 0
\end{bmatrix}.
\end{eqnarray*}

Thus, we find that
\begin{equation} \label{t-M-2}
\begin{split}
M_2 &=  \left( F_2 A' Q \right)  \left(  B' Q F_2^{-1}\right)  ~-~ I_{n(k+1)} \\
 &=\begin{bmatrix}
{\wh A}_k &\hspace{0.3em} {\bf 0}   & \hspace{0.3em}  A_k \varrho_2(q) q\\[0.5em] 
{\bf 0} & \hspace{0.3em} I_n & \hspace{0.3em} 0
\end{bmatrix}
\begin{bmatrix}
{\wh B}_k &{\bf 0}   & B_k\\[0.5em] 
{\bf 0} & \varrho_2(q^{-1}) q^{-1} & 0
\end{bmatrix} ~-~ I_{n(k+1)}.
\end{split}
\end{equation}

The claim now follows by direct comparison of Equations \eqref{t-M-1} and \eqref{t-M-2}, showing that
$$
H^{\varrho_1}_{\be_1} = \det(M_1) = \det(M_2) = H^{\varrho_2}_{\be_2}.
$$

\medskip

%%%%%%%%%%%%%%%%%%%%%%%%%%%%%%  

\noindent
{\it Left virtual exchange move.}

We have  $\be_1 = \al  \, \si_1 \, \be  \, \si_1^{-1}  $ and  $\be_2 = \al  \, \tau_1  \, \be  \, \tau_1$.
As in the case of the right virtual exchange move,  $(\be_2)_*\varrho_2   = \varrho_2$.

Applying  \eqref{psipropertyonemultiple}, it follows that
\begin{equation} \label{LEM-beta1beta2}
\begin{split}
\Psi_{\varrho_1}(\be_1) =& \Psi_{(\si_1 \be \si_1^{-1})_*{\varrho_1}} \left(\al \right) ~~ \Psi_{(\be \si_1^{-1})_*{\varrho_1}}\left(\si_1\right) ~~ \Psi_{(\si_1^{-1})_*\varrho_1}\left(\be \right) ~~ \Psi_{\varrho_1}\left(\si_1^{-1} \right),  \\
\Psi_{\varrho_2}(\be_2)  =& \Psi_{(\tau_1 \be \tau_1)_*{\varrho_2}}\left(\al\right) ~ \Psi_{(\be \tau_1)_* {\varrho_2}}\left(\tau_1\right)  ~~ \Psi_{(\tau_1)_*\varrho_2 }\left(\be\right)  ~~\Psi_{\varrho_2}\left(\tau_1\right).
\end{split}
\end{equation}

Since $\si_1 \be \si_1^{-1}= \al^{-1}\be_1 $ and $(\be_1)_* \varrho_1 =\varrho_1$, we have
$(\si_1 \be \si_1^{-1})_*{\varrho_1} = (\al^{-1}\be_1)_* \varrho_1 = (\al^{-1})_*\varrho_1$.
Similarly, $(\be_2)_* \varrho_2 =\varrho_2$ implies $(\tau_1 \be \tau_1)_*{\varrho_2} = (\al^{-1})_*\varrho_2$.
Hence
$\Psi_{(\si_1 \be \si_1^{-1})_*{\varrho_1}} \left(\al \right)
=  \Psi_{(\al^{-1})_* \varrho_1} \left( \al \right)$
and 
$\Psi_{(\tau_1 \be \tau_1)_*{\varrho_2}} \left(\al \right)
=  \Psi_{(\al^{-1})_* \varrho_2} \left( \al \right)$.

Consider the injective
``left stabilization'' homomorphism
$\mu \colon F'_{k+2} = \langle x_2, \ldots, x_{k+1}, s,q \rangle \hookrightarrow F_{k+3}$ that
takes a generator to a generator of the same name.
Since $\varrho_1$ and $\varrho_2$ agree on the generators $x_j$  for $2 \leq j \leq k+1$ we
have $\mu_* \varrho_1 = \mu_* \varrho_2$  (where $\mu_* \varrho_j$ denotes the composite of $\varrho_j$ and $\mu$).
Let $\varrho = \mu_* \varrho_1$.
Note that $\mu_*(\al^{-1})_* \varrho_j = (\al^{-1})_* \varrho$ for $j=1,2$.
Let $A = \Psi_{(\al^{-1})_* \varrho} \left( \al \right)$.
Then
\[
 \Psi_{(\si_1 \be \si_1^{-1})_*{\varrho_1}} \left(\al \right) ~=~
\begin{bmatrix}
I_n   & \hspace{0.3em} {\bf 0}  \\[0.3em]
{\bf 0}   & \hspace{0.3em} A
\end{bmatrix}
~=~  \Psi_{(\tau_1 \be \tau_1)_*{\varrho_2}} \left(\al \right).
\]
Call the above matrix $A'$.

The condition defining $\varrho_2$ asserts that $\left({\si_1}^{-1}\right)_*\varrho_1(x_j) = {\left(\tau_1\right)}_*\varrho_2(x_j)$ for $2 \leq j \leq k+1$
and so $\mu_*\left({\si_1}^{-1}\right)_*\varrho_1 =  \mu_*\left({\tau_1}\right)_*\varrho_2$.
Let ${\wh \varrho} = \mu_*\left({\si_1}^{-1}\right)_*\varrho_1$ and
let $B = \Psi_{{\wh \varrho}}(\be)$.  We conclude that
\[
 \Psi_{(\si_1^{-1})_*{\varrho_1}} \left(\be \right) ~=~
\begin{bmatrix}
I_n   & \hspace{0.3em} {\bf 0}  \\[0.3em]
{\bf 0}   & \hspace{0.3em} B
\end{bmatrix}
~=~  \Psi_{(\tau_1)_*{\varrho_2}} \left(\be \right).
\]
Call the above matrix $B'$.

Let $P' = \Psi_{\varrho_1}\left(\si_1^{-1} \right)$ and 
$P = \Psi_{(\be \si_1^{-1})_*{\varrho_1}}\left(\si_1\right)$.  
We have $x_1^{\be \si_1^{-1}} = s^{-1} x_1^{-1}s x_2 x_1$ and $x_2^{\be \si_1^{-1}} = s^{-1}x_1 s$,
and applying Remark \ref{rem-mult} shows that $P'$ and $P$ are given by
$$
P' = \begin{bmatrix}

\varrho_1(s^{-1} x_1^{-1} s x_{2})-\varrho_1( s^{-1} x_{1}^{-1})s^{-1} t^{-1} & \varrho_1( s^{-1} x_{1}^{-1}s) t^{-1} & {\bf 0} \\
\varrho_1(s^{-1})s^{-1} &  0 & \vdots  \\[0.3em]
{\bf 0} & \cdots & I_{n(k-1)}  
\end{bmatrix},
$$
and
$$
P  = \begin{bmatrix}
0 & \varrho_1(s) s & {\bf 0}  \\
\varrho_1(s^{-1} x_{1}s)t & I_n-\varrho_1(x_{2}s) st & \vdots  \\[0.3em]
{\bf 0} & \cdots & I_{n(k-1)}
\end{bmatrix}.
$$
Let $Q = \Psi_{\varrho_2}\left(\tau_1 \right)$.  By Remark \ref{rem-mult}, we have $Q = \Psi_{(\be \tau_1)_*{\varrho_2}}\left(\tau_1\right)$  and so 
$$
Q = \begin{bmatrix}
0 & \varrho_2(q) q & {\bf 0} \\
\varrho_2(q^{-1}) q^{-1} & 0 & \vdots\\[0.3em]
{\bf 0} & \cdots & I_{n(k-1)}
\end{bmatrix}.
$$
With the names given to the various matrices displayed above, we have by \eqref{LEM-beta1beta2} that
$\Psi_{\varrho_1}(\be_1) = A' P B' P'$ and
$\Psi_{\varrho_2}(\be_2) = A' Q B' Q$.

Let
$$ E = \begin{bmatrix}
\varrho_1(s^{-1} x_1^{-1}s)t^{-1} & \varrho_1(s^{-1}x_1^{-1}s x_2 s)s-\varrho(s^{-1}x_1^{-1}s)t^{-1} & {\bf 0} \\
0 & I_n & \vdots\\[0.3em]
{\bf 0} &  \cdots  &  I_{n(k-1)}  
\end{bmatrix},$$

$$F_1= \begin{bmatrix}
\varrho_1(s^{-1} x_1 s)t         & {\bf 0}  \\[0.3em]
{\bf 0}     & I_{nk}
\end{bmatrix},
\quad \text{ and } \quad
F_2 = \begin{bmatrix}
\varrho_2(q) q   & \hspace{0.3em} {\bf 0}  \\[0.3em]
{\bf 0}    & \hspace{0.3em} I_{nk}
\end{bmatrix}.$$

Since $\be_1 = \al \si_1 \be \si_1^{-1}$ and  $\be_2 = \al \tau_1 \be \tau_1$, we have
\begin{eqnarray*}
H^{\varrho_1}_{\be_1} &=&  \det\left(A'PB' P' - I_{n(k+1)}\right) =    \det\left(B' P' A'P  - I_{n(k+1)}\right) \quad  \text{ and }\\
H^{\varrho_2}_{\be_2}  &=& \det\left(A'Q B' Q- I_{n(k+1)}\right) =    \det\left(B' Q A' Q  - I_{n(k+1)}\right).
\end{eqnarray*}

Consider the matrices
$$M_1 = F_1 \left(B' P' A'P  - I_{n(k+1)}\right) E   \quad \text{   and   } \quad
M_2 =  F_2^{-1} \left(B' Q A' Q - I_{n(k+1)}\right) F_2.$$
Since $\det(E)=t^{-n} \det \varrho_1(x_{2})^{-1}$ and $\det(F_1)=t^n \det \varrho(x_2)$, we have $\det(M_1) = H^{\varrho_1}_\be$.   Also, $\det(M_2) = H^{\varrho_2}_{\be'}$.

%%%%%%%%%%%%%%%%%%%%%%%%%%%%%%%%%%%%%%%%%%%%
\noindent{\bf Claim:} $M_1 = M_2$. \\
\noindent
Let $A_1$ and $B_1$ denote the first block columns of $A$ and $B$, respectively, and let
$\wh A_1$ and $\wh B_1$ be the $nk\times n(k-1)$ matrices obtained  by removing the first block columns from $A$ and $B$, respectively.

%%%%%%%%%%%%%
One then finds that
\begin{eqnarray*}
\left( F_1 B' \right)P' &=&
\begin{bmatrix}
\varrho_1(s^{-1} x_{1}s)t     & \hspace{0.3em}  {\bf 0}  \\[0.3em]
{\bf 0}     & \hspace{0.3em} B
\end{bmatrix}
\begin{bmatrix}
\varrho_1(s^{-1} x_1^{-1} s x_{2})-\varrho_1( s^{-1} x_{1}^{-1})s^{-1} t^{-1} & \varrho_1( s^{-1} x_{1}^{-1}s) t^{-1} & {\bf 0} \\
\varrho_1(s^{-1})s^{-1} &  0 & \vdots  \\[0.3em]
{\bf 0} & \cdots \hspace{0.3em} & I_{n(k-1)}  
\end{bmatrix}\\
&=& 
\begin{bmatrix}
\varrho_1(x_2)t - \varrho_1(s^{-1})s^{-1} &\hspace{0.3em} I_n   &\hspace{0.3em} {\bf 0}\\[0.5em] 
B_1 \varrho_1(s^{-1})s^{-1} & {\bf 0} & {\wh B}_1
\end{bmatrix} \quad \text{ and } \\ 
%%%%%%%%%%
A'\left(P E\right)  
&= &
\begin{bmatrix}
I_n   & \hspace{0.3em}  {\bf 0}  \\[0.3em]
{\bf 0}   & \hspace{0.3em} A
\end{bmatrix}
\begin{bmatrix}
0 &  \varrho_1(s)s & {\bf 0} \\ 
I_n & 0 & \vdots \\[0.3em]
{\bf 0} & \cdots \hspace{0.3em} & I_{n(k-1)}
\end{bmatrix} 
=
\begin{bmatrix}
0 &\varrho_1(s)s   & {\bf 0} \\[0.5em] 
A_1 & {\bf 0} & {\wh A}_1
\end{bmatrix}.
\end{eqnarray*}

Furthermore, we have
\begin{eqnarray*}F_1 E &=&
\begin{bmatrix}
\varrho_1(s^{-1} x_1 s)t         & {\bf 0}  \\[0.3em]
{\bf 0}     & I_{nk}
\end{bmatrix}
\begin{bmatrix}
\varrho_1(s^{-1} x_1^{-1}s)t^{-1} & \varrho_1(s^{-1}x_1^{-1}s x_2 s)s-\varrho(s^{-1}x_1^{-1}s)t^{-1} & {\bf 0} \\
0 & I_n & \vdots\\[0.3em]
{\bf 0} &  \cdots  &  I_{n(k-1)}  
\end{bmatrix} \\
&=&
\begin{bmatrix}
I_n &  \varrho_1(x_2s)st -I_n  & {\bf 0} \\ 
0   & I_n & \vdots \\[0.5em]
{\bf 0} & \cdots \hspace{0.3em} &  I_{n(k-1)}
\end{bmatrix}.
\end{eqnarray*}

It follows that
\begin{equation} \label{tt-M-1}
\begin{split}
M_1 &=  \left( F_1 B' P' \right) \left( A' P E\right) ~-~ F_1 E \\
&= \begin{bmatrix}
\varrho_1(x_2)t - \varrho_1(s^{-1})s^{-1} \hspace{0.3em} &I_n   \hspace{0.3em} & {\bf 0}\\[0.5em] 
B_1 \varrho_1(s^{-1})s^{-1} & 0 & {\wh B}_1
\end{bmatrix}
\begin{bmatrix}
0 &\varrho_1(s)s   & {\bf 0} \\[0.5em] 
A_1 \hspace{0.3em} & 0 \hspace{0.3em} & {\wh A}_1
\end{bmatrix} \\
& \qquad -
\begin{bmatrix}
I_n &  \varrho_1(x_2s)st -I_n  & {\bf 0} \\ 
0   & I_n & \vdots \\[0.5em]
{\bf 0} & \cdots \hspace{0.3em} &  I_{n(k-1)}
\end{bmatrix}.
\end{split}
\end{equation}

An easy computation shows that 

%%%%%%
\begin{eqnarray*}
 \left(F_2^{-1} B'\right) Q  &=& \begin{bmatrix}
\varrho_2(q^{-1})q^{-1}    & \hspace{0.3em} {\bf 0}  \\[0.5em]
{\bf 0}    & \hspace{0.3em} B
\end{bmatrix}
\begin{bmatrix}
0 &  \varrho_2(q)q & {\bf 0} \\ 
\varrho_2(q^{-1})q^{-1} & 0 & \vdots\\[0.5em]
{\bf 0} & \cdots \hspace{0.3em} & I_{n(k-1)}
\end{bmatrix}
=
\begin{bmatrix}
0 &I_n   & {\bf 0}\\[0.5em] 
B_1 \varrho_2(q^{-1})q^{-1}  & {\bf 0} & \wh{B}_1
\end{bmatrix} \quad \text{ and } \\
%%%%%%%
A' \left( Q F_2\right) &=&
\begin{bmatrix}
I_n  & \hspace{0.3em} {\bf 0}  \\[0.3em]
{\bf 0}   & \hspace{0.3em} A
\end{bmatrix}
\begin{bmatrix}
0 &  \varrho_2(q)q &\hspace{0.3em} {\bf 0} \\ 
I_n & 0 & \vdots \\[0.5em]
{\bf 0} & \cdots & I_{n(k-1)}  
\end{bmatrix}
=
\begin{bmatrix}
0 &\hspace{0.3em} \varrho_2(q)q   & \hspace{0.3em} {\bf 0} \\[0.5em] 
A_1   &\hspace{0.3em} {\bf 0} &\hspace{0.3em} {\wh A}_1
\end{bmatrix}.
\end{eqnarray*}

Thus, we find that
\begin{equation} \label{tt-M-2}
\begin{split}
M_2 &=  \left( F_2^{-1} B' Q \right)  \left(  A' Q F_2\right)  ~-~ I_{n(k+1)} \\
 &=\begin{bmatrix}
0 &I_n   & {\bf 0}\\[0.5em] 
B_1 \varrho_2(q^{-1})q^{-1}  &{\bf 0} & \wh{B}_1
\end{bmatrix}
\begin{bmatrix}
0 &\hspace{0.3em} \varrho_2(q)q   & \hspace{0.3em} {\bf 0} \\[0.5em] 
A_1  &\hspace{0.3em} {\bf 0} &\hspace{0.3em} {\wh A}_1
\end{bmatrix} ~-~ I_{n(k+1)}.
\end{split}
\end{equation}

The claim now follows by direct comparison of Equations \eqref{tt-M-1} and \eqref{tt-M-2}, showing that
$$
H^{\varrho_1}_{\be_1} = \det(M_1) = \det(M_2) = H^{\varrho_2}_{\be_2}.
$$
\end{proof}

%%%%%%%%%%%%%%%%%%%%%%%%%%%%%%  

We apply these results to define a preferred normalization for the twisted virtual Alexander polynomial $H^\varrho_K(s,t,q)$ as follows.
Let $K$ be a virtual knot or link  represented as the closure of a braid $\be \in \VB_k,$ 
and suppose $\varrho  \colon F_{k+2}  \to GL_n(R)$ is a representation such that  $\be_* \varrho  = \varrho$.

\begin{definition} \label{t-Defnorm}
The {\it normalized twisted virtual Alexander polynomial} is given by setting $${\wh H}^\varrho_K(s,t,q) = (-1)^{n(\writhe(\be) + v(\be))} H^\varrho_\be(s,t,q),$$
where $\writhe(\be)$ is the writhe  and $v(\be)$  is the virtual crossing number of $\wh \be.$ 
Then ${\wh H}^\varrho_K(s,t,q)$ is an invariant of virtual knots and links that is well-defined up to a factor of $\ep (st)^{jn},$ where $\ep$ is a unit in $R$ and $j \in \ZZ.$ In fact, one can further assume $\ep$ lies in the image of $\det \varrho(F_{k+2}) \subset  R$, so if $\varrho \colon F_{k+2} \to SL_n(R)$ is unimodular, then ${\wh H}^\varrho_K(s,t,q)$ is well-defined up to $(st)^{jn}.$
\end{definition}

Invariance follows from Theorem \ref{virtualMarkov}, Proposition \ref{twist-VM2-inv} and Proposition \ref{twist-VM3-inv}, which show that $(-1)^{n(\writhe(\be) + v(\be))} H^\varrho_\be(s,t,q)$
is independent of the braid representative, up to an overall factor of $\ep (st)^{jn}.$
We will write ${\wh H}^\varrho_K(s,t,q) = f$ whenever $f \in \cL$ is a Laurent polynomial such that ${\wh H}_K(s,t,q) = \ep (st)^{jn} \cdot f$ for $\ep$ a unit in $R$ and $j \in \ZZ.$

Using the normalized twisted virtual Alexander polynomial, we obtain the following improvement of Theorem \ref{twist-v-bound}. Recall that $\deg_{q^{-1}}$ and $\deg_q $ are defined for the Laurent polynomial ${\wh H}^\varrho_K(s,t,q)$ by regarding it as a polynomial in $q$ and $q^{-1}.$

\begin{theorem} \label{better-twist-v-bound}
Given a virtual knot or link $K$,  then  
$$\deg_{q^{-1}} {\wh H}^\varrho_K(s,t,q)~\leq~n \, v(K) \quad \text{ and } \quad \deg_q {\wh H}^\varrho_K(s,t,q)~\leq~n \, v(K).$$  
\end{theorem}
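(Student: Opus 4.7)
The plan is to exploit the braid realization of Definition \ref{t-Defnorm} together with a standard block-matrix identity, reducing the question to a row-counting argument. First, by Kamada's virtual Alexander theorem, I will choose a virtual braid $\beta = g_1 g_2 \cdots g_\ell \in \VB_k$ with $\wh\beta = K$ and $v(\beta) = v(K) = m$, so that exactly $m$ of the factors $g_j$ are of the form $\tau_i$. Since $\wh H^\varrho_K = \pm \det(\Psi_\varrho(\beta) - I_{nk})$ up to a $q$-free factor of $(st)^{jn}$, it will suffice to bound the $q$-degrees of $D := \det(\Psi_\varrho(\beta) - I_{nk})$. Writing $M_j = \Psi_{\varrho_j}(g_j)$ with $\varrho_j = (g_{j+1} \cdots g_\ell)_*\varrho$, so that $\Psi_\varrho(\beta) = M_1 M_2 \cdots M_\ell$ by \eqref{psipropertyonemultiple}, I will replace $D$ by the determinant of the $\ell nk \times \ell nk$ cyclic block matrix
\[
\widetilde M = \begin{bmatrix} I_{nk} & -M_2 & & \\ & I_{nk} & -M_3 & \\ & & \ddots & -M_\ell \\ -M_1 & & & I_{nk} \end{bmatrix};
\]
a Schur-complement computation (iteratively clearing the $-M_1$ block by block-row operations) will show that $\det \widetilde M = \det(I_{nk} - M_1 \cdots M_\ell) = (-1)^{nk} D$.

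The heart of the argument is to identify, row by row, the $q$-behaviour of $\widetilde M$. By the explicit formulas in equation \eqref{eq:twisted-Burau}, each $\Psi_{\varrho_j}(\sigma_i^{\pm 1})$ is $q$-free, whereas $\Psi_{\varrho_j}(\tau_i)$ carries a single $n \times n$ block $\varrho(q) q$ in block position $(i, i+1)$ and a single block $\varrho(q^{-1}) q^{-1}$ in block position $(i+1, i)$, with all other entries equal to $0$ or drawn from $I_n$; crucially, the twist is irrelevant here since $(g_{j+1} \cdots g_\ell)_*\varrho$ agrees with $\varrho$ on $q$. Because each $-M_j$ sits in exactly one big block row of $\widetilde M$, the contribution of each virtual factor $\tau_i$ will produce precisely $n$ scalar rows whose nonzero entries consist of $+1$'s (from the diagonal block $I_{nk}$) together with entries of $-\varrho(q) q$ of $q$-degree exactly $+1$, and precisely $n$ further scalar rows whose nonzero entries consist of $+1$'s together with entries of $-\varrho(q^{-1}) q^{-1}$ of $q$-degree exactly $-1$. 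All remaining scalar rows of $\widetilde M$ will be $q$-free.

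To conclude, $\widetilde M$ will have exactly $nm$ scalar rows with entries of $q$-degree in $\{0, +1\}$ and exactly $nm$ with entries of $q$-degree in $\{0, -1\}$. Expanding $\det \widetilde M$ as a signed sum over permutations and selecting one entry per row, any single term must have $q$-degree at most $nm$ because only the $nm$ ``positive'' rows can contribute a $q^{+1}$ factor (at most once each), and by symmetry its $q^{-1}$-degree is at most $nm$. This yields $\deg_q D \leq nm$ and $\deg_{q^{-1}} D \leq nm$, and hence $\deg_q \wh H^\varrho_K \leq n v(K)$ and $\deg_{q^{-1}} \wh H^\varrho_K \leq n v(K)$. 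I expect the main technical point to be the row-counting of the second paragraph: verifying that the twist $\varrho_j$ leaves the localised $q$-structure of $\Psi_{\varrho_j}(\tau_i)$ intact and that each $-M_j$ is confined to a single big block row of $\widetilde M$, so that the ``positive'' and ``negative'' rows of $\widetilde M$ are cleanly separated and can be counted independently.
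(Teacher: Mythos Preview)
The paper gives no proof of this theorem, leaving it implicit after Definition \ref{t-Defnorm} just as Theorem \ref{better-v-bound} was left unproved in Section \ref{section-5}. Your argument is essentially correct and supplies the missing details. The block identity $\det\widetilde M=\det(I_{nk}-M_1\cdots M_\ell)$ is standard, and your row accounting is right: since the fundamental representation fixes $q$, each twist $\varrho_j=(g_{j+1}\cdots g_\ell)_*\varrho$ agrees with $\varrho$ on $q$, so every $\Psi_{\varrho_j}(\tau_i)$ has exactly the $q$-structure displayed in \eqref{eq:twisted-Burau}. Each of the $m$ virtual letters therefore contributes $n$ scalar rows of $\widetilde M$ with entries of $q$-degree in $\{0,1\}$ and $n$ rows with entries of $q$-degree in $\{-1,0\}$, all remaining rows being $q$-free; the permutation expansion then gives $\deg_q D\le nm$ and $\deg_{q^{-1}}D\le nm$, and this transfers to $\wh H^\varrho_K$ because the normalization factor $(-1)^{n(\writhe(\be)+v(\be))}\ep(st)^{jn}$ is $q$-free.

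The one place needing more care is your opening move. Kamada's virtual Alexander theorem only guarantees that $K$ is the closure of \emph{some} virtual braid; it does not by itself assert that one may take $v(\beta)=v(K)$. What you actually need is the sharper statement that any virtual diagram with $m$ virtual crossings can be braided without increasing the virtual crossing count --- run the classical Alexander (or Yamada--Vogel) braiding so that every newly created crossing is classical, and slide existing virtual crossings past new classical ones via the mixed move $(v4)$, which preserves their number. You should either cite this refinement or sketch it, rather than invoking the Alexander theorem alone. An alternative that stays closer to the paper's earlier arguments: by the proof of Theorem \ref{thm:twisted-Burau} one has $H^\varrho_\beta=\det A_{\wh\beta}$ exactly (the two presentations differ only by $\mathbf{S4}$ moves), so one may bypass the cyclic block matrix entirely and row-count directly in the Wirtinger matrix of the closure diagram $\wh\beta$, as in the proof of Theorem \ref{twist-v-bound}; but this route still requires $v(\beta)=v(K)$, so the same refinement is needed either way.
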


\subsection*{Concluding Remarks}

In the classical case, twisted Alexander invariants have been used to characterize fibered knots \cite{Friedl-Vidussi}, to study sliceness \cite{Kirk-Livingston-Twisted, Herald-Kirk-Livingston} and periodicity  \cite{Hillman-Livingston-Naik}, and to understand a partial ordering on knots defined in terms of surjections between the associated knot groups \cite{Kitano-Suzuki-Wada, Kitano-Suzuki, Horie-Kitano-Matsumoto-Suzuki}.
It would be interesting to develop similar results for virtual knots, though it is not at all obvious how to define fibered and slice knots in the virtual category. On the other hand, there is a well-developed notion of periodicity for virtual knots, and it is expected that the Alexander invariants and twisted Alexander invariants will take a special form for periodic virtual knots. Further, just as in the classical case, one can construct a partial ordering on virtual knots in terms of surjections of the virtual knot groups, and the twisted Alexander invariants can be applied to those questions by the general results of Kitano, Suzuki, and Wada in \cite{Kitano-Suzuki-Wada}. 

For classical knots, the knot group $G_K$ has a topological interpretation as the fundamental group of the complement of the knot, and a natural question to ask is whether the virtual knot group $\VG_K$ is the fundamental group of some topological space naturally associated to $K$. A related problem is to characterize which groups occur as virtual knot groups $\VG_K$ for a virtual knot $K$, and we note that the corresponding problem for the knot group $G_K$ was solved by Kim in \cite{Kim00}. It would be interesting to develop the theory of virtual knot groups and Alexander invariants for virtual tangles and for long virtual knots. It would also be interesting to construct a categorification of the virtual Alexander polynomial $H_K(s,t,q)$. Finally, while the focus of this paper has been on the case of virtual knots, one can establish similar results for virtual links using multi-variable Alexander polynomials.  We hope to explore some of these questions in future work.

\bigskip
\noindent
{\it Acknowledgements.} H. Boden and A. Nicas were supported by grants from the Natural Sciences and Engineering Research Council of Canada.
E. Dies, A. Gaudreau and A. Gerlings were supported by Undergraduate Student Research Awards from  the Natural Sciences and Engineering Research Council of Canada.

\end{document}